\definecolor{darkgreen}{rgb}{0,0.5,0}
\newcommand{\OKS}{R}
\newcommand{\OKSp}{R'}
\newcommand{\bbQ}{\mathbb{Q}}
\newcommand{\bbZ}{\mathbb{Z}}
\newcommand{\bbG}{\mathbb{G}}
\newcommand{\bbR}{\mathbb{R}}
\newcommand{\bbC}{\mathbb{C}}
\newcommand{\bbP}{\mathbb{P}}
\newcommand{\bbA}{\mathbb{A}}
\DeclareMathOperator{\cJ}{\mathcal{J}}
\DeclareMathOperator{\cO}{\mathcal{O}}
\DeclareMathOperator{\fkm}{\mathfrak{m}}
\DeclareMathOperator{\fkp}{\mathfrak{p}}
\DeclareMathOperator{\fkP}{\mathfrak{P}}
\DeclareMathOperator{\Res}{Res}
\DeclareMathOperator{\Spec}{Spec}
\DeclareMathOperator{\rank}{rank}
\DeclareMathOperator{\Span}{\mathrm{Span}}
\DeclareMathOperator{\cA}{\mathcal{A}}
\DeclareMathOperator{\cC}{\mathcal{C}}
\DeclareMathOperator{\supp}{\mathrm{supp}}
\DeclareMathOperator{\Gal}{\mbox{Gal}}
\newtheorem{theorem}{Theorem}[section]
\newtheorem{lemma}[theorem]{Lemma}
\newtheorem{corollary}[theorem]{Corollary}
\newtheorem{proposition}[theorem]{Proposition}
\theoremstyle{definition}
\newtheorem{definition}[theorem]{Definition}
\newtheorem{conjecture}[theorem]{Conjecture}
\newtheorem{question}[theorem]{Question}
\theoremstyle{remark}
\newtheorem{algorithm}[theorem]{Algorithm}
\newtheorem{remark}[theorem]{Remark}
\newtheorem{example}[theorem]{Example}
\title{Restriction of Scalars Chabauty and the $S$-Unit Equation}
\author{Nicholas Triantafillou}
\email{nicholas.triantafillou@uga.edu}
\address{Department of Mathematics, University of Georgia\\
		Athens, GA 30602, USA}
\date{June 28, 2021}
\thanks{I thank the National Science Foundation Graduate Research Fellowship and Research Training Group in Algebra, Algebraic Geometry, and Number Theory at the University of Georgia [grant numbers 1122374, DMS-1344994]; and the Simons Foundation [grant number 550033] for funding.}
\subjclass[2020]{Primary 14G05;	Secondary  11D45, 11D59, 11G20}
\keywords{Chabauty's method, Skolem's method, Unit equations, exceptional units, restriction of scalars, Neron models, generalized Jacobians}
\begin{document}
% -------------------------------------------------------------

\maketitle

\begin{abstract}
Given a smooth, proper, geometrically integral curve $X$ of genus $g$ with Jacobian $J$ over a number field $K$, Chabauty's method is a $p$-adic technique to bound $\# X(K)$ when $\rank J(K) < g$. We study limitations of a variant called `Restriction of Scalars Chabauty' (RoS Chabauty). RoS Chabauty typically bounds $\# X(K)$ when $\rank J(K) \leq [K:\mathbb{Q}] (g - 1)$, but fails in the presence of a \emph{subgroup obstruction}, a high-rank subgroup scheme of $\Res_{K/\mathbb{Q}} J$ which intersects the image of $\Res_{K/\mathbb{Q}} X$ in higher-than-expected dimension. We define \emph{BCP obstructions}, which are certain subgroup obstructions arising from the geometry of $X$. BCP obstructions explain all \emph{known} examples where RoS Chabauty fails to bound $\# X(K)$. We also extend RoS Chabauty to compute $S$-integral points on affine curves. 

Suppose $K$ does not contain a CM-subfield. We present a $p$-adic algorithm which conjecturally computes solutions to the $S$-unit equation $x+y = 1$ for $x,y \in \mathcal{O}_{K,S}^{\times}$ by using RoS Chabauty to compute $S$-integral points on certain genus $0$ affine curves. As evidence the algorithm succeeds, we prove that all but one of these curves have no subgroup obstructions and that the remaining curve has no BCP obstructions to RoS Chabauty. In contrast, under a generalized Leopoldt conjecture, we prove that analogous methods using classical Chabauty cannot bound solutions to the $S$-unit equation when $[K:\mathbb Q] \geq 3$ and $K$ is not totally real.
\end{abstract}

\section{Introduction}

\subsection{Background}\label{sec:chabauty-background}

Let $X$ be a smooth, proper, geometrically irreducible curve of genus $\geq 2$ over a number field $K$.\footnote{We will often take $X$ to be a suitable $S$-integral model of a smooth, geometrically irreducible affine curve instead of a smooth proper curve over a number field $K$ of genus $\geq 2$.} By Faltings' Theorem, $X(K)$ is finite \cite{faltings-83}. The known proofs do not give a strategy to compute $X(K)$. Let $\fkp$ be a prime of $K$ and let $j: X \to J$ be a finite map from $X$ to its Jacobian. Under the additional assumption 
\begin{align}\label{eqn:chabauty-inequality-1}
\rank J(K) < \dim J = \text{genus}(X)\,,
\end{align}
Chabauty's method is a $\fkp$-adic technique to produce an explicit finite subset $X(K_{\fkp})_{1} \subset X(K_{\fkp})$ such that $X(K) \subset X(K_{\fkp})_{1}$. When $K_{\fkp} \cong \bbQ_{p}$, the set $X(K_{\fkp})_{1}$ is the intersection of $X(K_{\fkp})$ with the $p$-adic closure of $J(K)$ inside of $J(K_{\fkp})$. In general $X(K_{\fkp})_{1}$ contains this intersection. When \eqref{eqn:chabauty-inequality-1} holds, the set $X(K_{\fkp})_{1}$ is finite and one can frequently compute $X(K)$ exactly from $X(K_{\fkp})_{1}$ using techniques like the Mordell-Weil sieve. Historically, Chabauty's method was inspired by earlier work of Skolem, who computed $S$-integral points on affine genus $0$ curves by embedding them in well-chosen tori, which we recognize today as the generalized Jacobians of these curves. In fact, it is possible to synthesize Chabauty's insight with Skolem's original ideal. Given a finite set $S$ of primes of $K$, Chabauty's method can be modified to compute the $S$-integral points on a (suitable) model of an affine curve $X$ under a similar rank versus dimension hypothesis for the generalized Jacobian of $X$. To extend Chabauty's method to this case, we develop technical tools to deal with integral points on locally of finite type N\'{e}ron models of generalized Jacobians. We explain this in more detail in Sections~\ref{sec:prelim-defs} and \ref{sec:classical-chabauty}.

Several strategies have been proposed to augment Chabauty's method to compute $X(K)$ when $\dim J \leq \rank J(K) $. In this article, we focus on Restriction of Scalars Chabauty (RoS Chabauty) and descent. We follow two main directions. Our first aim is to develop a framework for understanding obstructions that prevent RoS Chabauty from proving finiteness of rational/$S$-integral points on curves in particular examples.
Our second aim is to apply RoS Chabauty and descent towards the study (by elementary $p$-adic methods) of $S$-integral points on $\bbP^1_{\mathcal O_{K,S}} \smallsetminus \{0,1,\infty\}$, or equivalently the set
$
E_{K,S} \colonequals \{(x,y) \in (\mathcal O_{K,S}^{\times})^2: x + y = 1\}
$
of solutions to the $S$-unit equation. The application of RoS Chabauty to $S$-unit equations has already inspired new results on unit equations in \cite{triantafillou-20}, which proves that if $S = \emptyset$, the degree $[K:\bbQ]$ is not a multiple of $3$, and $3$ splits completely in $K$, then $E_{K,S} = \emptyset$. In other words, the unit equation has no solutions in $K$. 

\subsection{Development of RoS Chabauty}

One strategy for extending Chabauty's method beyond the bound \eqref{eqn:chabauty-inequality-1} is to replace $X$ and $J$ with the restrictions of scalars $\Res_{K/\bbQ} X$ and $\Res_{K/\bbQ} J$. This approach, which we call RoS Chabauty, first appeared in print in \cite{siksek-13}, which attributes the idea to \cite{wetherell-00}. Let $X(K \otimes \bbQ_p)_{1}$ be the intersection of $(\Res_{K/\bbQ} X)(\bbQ_{p})$ and the $p$-adic closure of $(\Res_{K/\bbQ} J)(\bbQ)$ in $(\Res_{K/\bbQ} J)(\bbQ_p)$. An expected dimension calculation suggests that $X(K \otimes \bbQ_p)_{1}$ is finite when 
\begin{align}\label{eqn:RoS-Chabauty-inequality-1}
\rank J(K) \leq [K:\bbQ] \cdot (\dim J - 1)\,.
\end{align} 
Since $X(K) \subset X(K \otimes \bbQ_p)_{1}$, if $X(K \otimes \bbQ_p)_{1}$ is finite, then $X(K)$ is finite as well.
However, if there is a subgroup scheme $T \subset \Res_{K/\bbQ} J$ such that $\rank T(\bbQ)$ is `large' and $T$ has a translate which intersects $\Res_{K/\bbQ} X$ in positive dimension, the set $X(K \otimes \bbQ_p)_{1}$ of points cut out by RoS Chabauty is infinite even when \eqref{eqn:RoS-Chabauty-inequality-1} holds. Recent works of Dogra and Hast \cite{dogra-19, hast-19} make progress towards proving that this is the only reason why the set $X(K \otimes \bbQ_p)_{1}$ can be infinite.\footnote{In fact, \cite{dogra-19} and \cite{hast-19} work in a much broader context combining ideas of RoS Chabauty with Kim's nonabelian Chabauty program.} So, to understand RoS Chabauty it is critical to understand the arithmetic of subgroup schemes of  $\Res_{K/\bbQ} J$ and the geometry of their intersection with $\Res_{K/\bbQ} X$.

\subsubsection{\textbf{Contributions of this article}}

Let $S_{0}$ be a finite set of finite places of $\bbQ$. Let $S$ be the set of places of $K$ lying above $S_{0}$. Let $R = \cO_{K,S}$ be the ring of $S$-integers and let $R_{0}$ be the ring of $S_{0}$-integers in $\bbQ$.

In Section~\ref{sec:setup}, we explain how to use RoS Chabauty to bound  $R$-points on suitable\footnote{Here, `suitable' means `equipped with a map to the connected component of the identity of the locally of finite type N\'eron model of the generalized Jacobian of the generic fiber.}$R$-models of affine curves and develop a framework for understanding obstructions that can cause the set $X(\cO_K \otimes \bbZ_p)_{S,1}$ of points cut out by RoS Chabauty to be infinite.

Let $X/R$ be a regular model of a not-necessarily proper curve over $K$, and let $J$ be its generalized Jacobian. We have already see that $X(\cO_{K} \otimes \bbZ_{p})_{S,1}$ will be infinite if there is a translate $x\cdot T$ of a subgroup of $T \subset \Res_{R/R_{0}} J$ such that $T(R_0)$ has large rank relative to $\dim T$ and $\dim((x\cdot T) \cap j(\Res_{R/R_{0}} X)$ is large. (See Definition~\ref{def:subgroup-obstruction} for a precise definition.)

The existence of more general obstructions forcing $\#X(\cO_{K} \otimes \bbZ_{p})_{S,1} = \infty$ has not been disproved outside of special cases. However, by \cite[Corollary~2.2]{dogra-19}, such obstructions would require the existence of (translates of) subgroup schemes $T_{p}$ of $(\Res_{R/R_{0}} J)_{\bbZ_{p}}$ which do not come from a subgroup scheme over $\bbZ$ and which intersect \emph{both} $j(\Res_{R/R_{0}} X)(\bbZ_{p})$ and $\Res_{R/R_{0}} J(\bbQ_{p})$ in relatively large dimension compared to $T_{p}$. (Roughly, \cite[Corollary~2.2]{dogra-19} applies Ax-Schanuel to show that there are no `unlikely intersections' so that the $T_{p}$ must be subgroup schemes rather than analytic subgroups.) Our point of view is that even if some such obstructions exist by coincidence for some prime $p$, it seems very unlikely that such obstructions would exist for most/all primes $p$ simultaneously in the absence of a subgroup obstruction.

In fact, in all examples in the literature with $\# X(\cO_{K} \otimes \bbZ_{p})_{S,1} = \infty$, there is a subgroup obstruction where the subgroup scheme $T \subset \Res_{R/R_{0}} J$ has a very special structure which is explained by the geometry of the curve $X$. To develop our framework for understanding obstructions to RoS Chabauty, we define the class of BCP\footnote{BC stands for base change and P stands for Prym variety}subgroup schemes of $\Res_{K/\bbQ} J$, where $J$ is the generalized Jacobian of $X$, which attempt to capture this structure. BCP subgroups are built out of restrictions of scalars of generalized Jacobians of curves which admit a non-constant map from $X$ after a suitable base change and generalized Prym varieties of morphisms between such curves. By construction, nontrivial BCP subgroups always intersect $\Res_{K/\bbQ} X$ in larger-than-expected dimension. This makes them candidates to obstruct RoS Chabauty. A BCP subgroup $T$ is a BCP obstruction if $T$ is also a subgroup obstruction. We give the precise definitions in Section~\ref{sec:main-defs}. Although we present these definitions in the setting where $J$ is the generalized Jacobian of an $R$-model of an affine genus $0$ curve, they generalize immediately to the higher genus and/or proper case.

The class of BCP obstructions is broad enough to include all \emph{known} examples where the set $X(\cO_{K} \otimes \bbZ_{p})_{S,1}$ cut out by RoS Chabauty is not finite. This includes examples where \eqref{eqn:RoS-Chabauty-inequality-1} does not hold, examples in Section~2 of \cite{siksek-13} where $X$ is the \emph{base change} of a curve for which \eqref{eqn:RoS-Chabauty-inequality-1} does not hold, and the more involved example in Section~2.2 of \cite{dogra-19}. BCP obstructions are also amenable to study. For any $K$ not containing a CM-subfield, Theorem~\ref{thm:main-theorem} can be used to produce infinitely many curves $X/R$ such that $\Res_{R/R_{0}} J$ has many large rank subgroups, but such that $X$ has no BCP obstruction to RoS Chabauty. Theorem~\ref{thm:main-theorem-2} goes further and shows that twists of closely related curves have no subgroup obstructions to RoS Chabauty. The difference between the two cases is that in the setup of Theorem~\ref{thm:main-theorem-2}, the Jacobian splits over a nonabelian extension of $K$, while in the setup of Theorem~\ref{thm:main-theorem}, the generalized Jacobian splits over an abelian extension of $K$. The nonabelian Galois group places strong constraints on the interaction between the arithmetic of subtori of the generalized Jacobian. We note that this seems to use nonabelian structure in a slightly different way than nonabelian Chabauty does. In Section~\ref{sec:descent-summary}, we discuss an application of these twist families to computing solutions to the $S$-unit equation in $K$.

Our definition of BCP obstructions raises the following natural question. 
\begin{question} \label{que:is-bcp-all-there-is}
	Is there a curve $X/K$ such that RoS Chabauty cannot prove $X(K)$ is finite, but there is no BCP obstruction to RoS Chabauty for $X$?
\end{question}
See also Remark~2.2. of \cite{dogra-19} for a similar question.
An affirmative answer to Question~\ref{que:is-bcp-all-there-is} would require a larger-than-expected intersection between $\Res_{K/\bbQ} X$ and a translate of a subgroup scheme $T$ of $(\Res_{K/\bbQ} J)_{\bbQ_{p}}$, which also intersects the closure of the integral points in large dimension. %Additionally, it would require either that $T(\bbQ)$ has large rank or that there is an unlikely intersection between a translate of the $p$-adic analytic group $\overline{T(\bbQ)}$ and the algebraic variety $\Res_{K/\bbQ} X$. 

A negative answer would give a finite criterion for the success of RoS Chabauty involving only the arithmetic and geometry of $X$. We note that even if there are subgroup obstructions which are not BCP obstructions, Question~\ref{que:is-bcp-all-there-is} may have a negative answer. For instance, if $\Res_{R_{0}/R} J$ has a subgroup $T'$ such that $\overline{T(\bbZ)}$ has finite index in $T'(\bbZ_{p})$ then the product of $T'$ with a BCP obstruction $T$ will be a subgroup obstruction. The resulting subgroup $T\cdot T'$ may be a BCP obstuction, but even if it is not, it is \emph{explained} by the presence of the BCP obstruction $T$. Since we focus on proving the non-existence of BCP obstructions and subgroup obstructions, we have elected not to attempt to catalog ways in which subgroup obstructions could be constructed by slight modifications of BCP obstructions. 

\subsection{Descent, Chabauty variants, and the $S$-unit equation} \label{sec:descent-summary}

Descent, named for Fermat's infinite descent, is another strategy which can be used to push Chabauty's method beyond the bound \eqref{eqn:chabauty-inequality-1}. Descent replaces the curve $X/K$ with a collection of covers $f_{i} : X_{i} \to X$, each defined over $K$, with the property that $X(K) = \bigcup_{i} f_{i}(X_{i}(K))$. Since $\text{genus}(X)~\geq~2$, the covers $X_{i}$ will have higher genus than $X$ by Riemann-Hurwitz.\footnote{In the analogous setup to compute integral points on affine curves, either the genus or number of punctures will grow (or both).} One hopes that \eqref{eqn:chabauty-inequality-1} is satisfied for all of the $X_i$ so that Chabauty's method can be used to compute each $X_{i}(K)$. Covering collections can be described explicitly. For instance, taking $[n]$ to be the multiplication-by-$n$ map on $J$, a covering collection can be constructed as translates by representatives for $J(K)/nJ(K)$ of the pullback $[n]^* X$. One can also use descent to study $S$-integral points on hyperbolic affine curves.

When $K = \bbQ$, \cite{poonen-19} uses the versions of descent and Chabauty's method for $S$-integral points on genus $0$ affine curves to give a new elementary $p$-adic proof that the set $E_{\bbQ,S}$ of solutions to the $S$-unit equation is finite. The argument produces a collection of punctured genus $0$ covers of $\bbP^1_{R_0}\smallsetminus\{0,1,\infty\}$ such that the ranks of the $S$-integral points on their generalized Jacobians can be bounded explicitly. 

\subsubsection{\textbf{Contributions of this article}}

Section~\ref{sec:classical-chab+descent}  studies limitations to the strategy of \cite{poonen-19} for proving finiteness of solutions to the $S$-unit equation when $[K:\bbQ] \geq 3$ and $K$ is not totally real.  Sections~\ref{sec:RoS-chab+descent} and \ref{sec:RoS-chab+descent-2} provide evidence that RoS Chabauty may allow us to overcome these limitations. 

Suppose the following generalization of Leopoldt's Conjecture holds.\footnote{The usual Leopoldt conjecture (that the $p$-adic regulator of $L$ does not vanish) is the $K = \bbQ$ case of Conjecture \ref{conj:general-leopoldt}, since $\rank T(\bbZ) \leq \dim \bbZ$ and every torus $T/\bbQ$ is a subtorus of some $\Res_{K/\bbQ} \bbG_{m,K}$.}

\begin{conjecture}[Generalized Leopoldt over $K$] \label{conj:general-leopoldt}
	Let $T$ be an irreducible torus over $K$ and let $\fkp$ be a prime of $K$ which is large enough that the ${\fkp}$-adic logarithm map 
	\[
	\log: T \to K_{\fkp}^{\dim T}
	\]
	is well-defined. Then,
	\begin{align}\label{eqn:generalized-Leopoldt}
	\dim_{K_{\fkp}} \Span_{K_{\fkp}} \log T(\cO_{K}) = \min( \rank T(\cO_{K}^{\times}), \dim T)\,.
	\end{align}
\end{conjecture}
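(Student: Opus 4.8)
\medskip
\noindent\textbf{Proof proposal.} Since \eqref{eqn:generalized-Leopoldt} is posed as a conjecture, the plan is only to outline the tractable cases and isolate the essential difficulty. One inequality is formal: the integral-point group of $T$ is finitely generated of rank $r := \rank T(\cO_{K}^{\times})$, and $\log$ is a homomorphism into the $(\dim T)$-dimensional $K_{\fkp}$-vector space $K_{\fkp}^{\dim T}$ whose kernel is exactly the torsion subgroup, so $\dim_{K_{\fkp}} \Span_{K_{\fkp}} \log T(\cO_{K}) \le \min(r, \dim T)$ with no hypothesis at all. The content of \eqref{eqn:generalized-Leopoldt} is therefore the reverse inequality, i.e.\ the non-vanishing of a $\fkp$-adic regulator attached to $T$.

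First I would pass to the splitting field. Let $L/K$ be the finite Galois splitting field of $T$, put $\Gamma = \Gal(L/K)$ acting on the cocharacter lattice $Y = X_{*}(T)$, and fix a place of $L$ above $\fkp$. Over $L$ the torus becomes $Y \otimes_{\bbZ} \bbG_{m}$, its group of $S$-units is identified up to finite index with a $\Gamma$-submodule of $Y \otimes_{\bbZ} \cO_{L,S_{L}}^{\times}$ (where $S_{L}$ is the set of places of $L$ over $S$), and $\log$ becomes, under the natural identifications, $\mathrm{id}_{Y} \otimes \log$. With this dictionary \eqref{eqn:generalized-Leopoldt} is equivalent to the assertion that the $p$-adic logarithms of a system of multiplicatively independent $S$-units of $L$ admit no nontrivial $\overline{\bbQ}$-linear relation which is $\Gamma$-equivariant and supported on the $T$-isotypic part — a Leopoldt-type non-degeneracy statement for $L$, refined by the Galois action.

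For $L$ abelian over $\bbQ$, or abelian over an imaginary quadratic field, I would then conclude as in the classical proof of Leopoldt's conjecture for abelian fields: by the $p$-adic analogue of Baker's theorem on linear forms in logarithms (due to Brumer), the $p$-adic logarithms of multiplicatively independent algebraic numbers lying in such a field are linearly independent over $\overline{\bbQ}$, which forces the required non-degeneracy for every torus $T$ split by $L$. This unconditionally establishes \eqref{eqn:generalized-Leopoldt} for all tori split over abelian extensions of $\bbQ$ (or of imaginary quadratic fields), and in particular gives evidence for the conjecture in those sub-cases of the setting of Theorem~\ref{thm:main-theorem} in which the generalized Jacobian splits over such a field.

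The main obstacle is the complementary case: when $L$ is non-abelian over $\bbQ$ there is no transcendence input capable of excluding exotic $p$-adic linear relations among logarithms of algebraic numbers, and in that generality \eqref{eqn:generalized-Leopoldt} is genuinely open, since it already contains the classical Leopoldt conjecture for arbitrary number fields. A complete proof would follow from a $p$-adic Schanuel conjecture, which is far beyond current methods; the unconditional $p$-adic transcendence estimates presently available yield only lower bounds for $\dim_{K_{\fkp}} \Span_{K_{\fkp}} \log T(\cO_{K})$ that fall strictly short of the equality asserted here.
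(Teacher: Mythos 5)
There is nothing in the paper to compare your argument against: the statement you were given is Conjecture~\ref{conj:general-leopoldt}, which the paper explicitly poses as a conjecture (a generalization of Leopoldt's conjecture --- the footnote notes that classical Leopoldt is the $K=\bbQ$ case) and never proves; it is only \emph{assumed} in Section~\ref{sec:classical-chab+descent} and in Remark~\ref{rem:CM-field-RoS-Obstruction}. Your proposal correctly recognizes this: the inequality $\dim_{K_{\fkp}} \Span_{K_{\fkp}} \log T(\cO_{K}) \leq \min(\rank T(\cO_K),\dim T)$ is indeed formal (the kernel of $\log$ meets $T(\cO_K)$ in torsion, so the span is generated by the images of $\rank T(\cO_K)$ elements), and the reverse inequality is a $\fkp$-adic non-degeneracy statement that already contains the classical Leopoldt conjecture, hence is open in general. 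So your overall assessment matches the status of the statement in the paper.

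One caveat on the part you do assert: the claim that Baker--Brumer ``unconditionally establishes'' \eqref{eqn:generalized-Leopoldt} for every torus split by an abelian extension of $\bbQ$ (or of an imaginary quadratic field) is plausible but is not proved by the sentence you give, and the paper makes no such claim you could lean on. In the case $\rank T(\cO_K) \leq \dim T$ the conjecture demands $K_{\fkp}$-linear independence of the log vectors; since matrix rank is insensitive to field extension, this is equivalent to the non-vanishing of some maximal minor, i.e.\ of a \emph{determinant} in $\fkp$-adic logarithms of algebraic numbers, not a single linear form. To invoke Brumer's theorem you must first factor that determinant into linear forms with algebraic coefficients via the characters of the (abelian) splitting group, as in the Frobenius/Dedekind determinant argument for classical Leopoldt, and check that the $T$-isotypic projection interacts correctly with the chosen coordinates on $\mathrm{Lie}\,T$; your reduction also slips in $S$-units of the splitting field, whereas the conjecture concerns $\cO_K$-points only. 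None of this is fatal --- it is the expected route for the abelian-split case --- but as written it is a sketch of evidence, not a proof, which is consistent with the statement remaining a conjecture in the paper.
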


Let $X/\cO_{K}$ be a regular model of a punctured genus $0$ curve and let $J$ be its generalized Jacobian. When $K_{\fkp} = \bbQ_{p}$, Conjecture~\ref{conj:general-leopoldt} says that the closure $\overline{J(\cO_{K})}$ in $J(\cO_{K_{\fkp}})$ under the $\fkp$-adic topology is as large as possible given the rank. More generally, Conjecture~\ref{conj:general-leopoldt} implies that if $X(\cO_{K_{\fkp}})\neq \emptyset$ then the set $X(\cO_{K_{\fkp}})_{\emptyset,1}$ of $\fkp$-adic points produced by Chabauty's method
is finite if and only if there is some subtorus $T \subset J$ such that $\rank T(\cO_{K}) < \dim T$. 

The main result of Section~\ref{sec:classical-chab+descent} is Theorem~\ref{thm:classical-Chabauty}, shows that  if $[K:\bbQ] \geq 3$ and $K$ is not totally real, then no such torus exists, and as a consequence $X(\cO_{K_{\fkp}})_{\emptyset,1} = X(\cO_{K_{\fkp}})$.  In particular, under the generalized Leopoldt conjecture, the strategy of \cite{poonen-19} to compute solutions to the $S$-unit equation by computing $S$-integral points on genus $0$ covers of $\bbP^1\smallsetminus \{0,1,\infty\}$ will not produce a finite set of points. The main ingredient in the proof of Theorem~\ref{thm:classical-Chabauty} is Lemma~\ref{lem:rank-anisotropic}, which gives a lower bound on the rank of the $O_{K}$-points of a torus.  

Section~\ref{sec:RoS-chab+descent} is devoted to proving our main results, Theorem~\ref{thm:main-theorem} and Theorem~\ref{thm:main-theorem-2}.

\begin{restatable*}{theorem}{noBCPobstruction}
	\label{thm:main-theorem} 
	Let $q$ be a prime number. Let $K$ be a number field which does not contain a CM subfield. Let $S_{0}$ be a finite set of finite places of $\bbQ$ and let $S$ be the set of places of $K$ lying above $S_{0}$. Let $R = \mathcal O_{K,S}$ be the ring of $S$-integers. For  $\alpha \in \OKS^{\times}$ which is a $q$th power in $\OKS$, set 
	\[
	\fkm_{\alpha, q} \colonequals \{x\in \overline{K}: x^q - 1 = 0, x\neq 1\}\,,
	\]
	viewed as a divisor on $\bbP^1_{K}$. Let $\Gamma_{\alpha, q}$ be the closure of $\supp(\fkm_{\alpha,q})$ in $\bbP^1_{R}$. Set $X_{\alpha, q} \colonequals \bbP^1_{R} \smallsetminus \Gamma_{\alpha,q}$.
	
	For $q$ sufficiently large (depending on $K$ and $S$, but not on $\alpha$), there are no BCP obstructions to RoS Chabauty for $X_{\alpha,q}$. 
\end{restatable*}

\begin{restatable*}{theorem}{nosubgroupobstruction}
	\label{thm:main-theorem-2} 
	Let $q$ be a prime number. Let $K$ be a number field. Let $S_{0}$ be a finite set of finite places of $\bbQ$ and let $S$ be the set of places of $K$ lying above $S_{0}$. Let $R = \mathcal O_{K,S}$ be the ring of $S$-integers. For $\alpha \in \OKS^{\times}$ which is \emph{not} a $q$th power in $\OKS$, set 
	\[
	\fkm_{\alpha, q} \colonequals 
	\{x\in \overline{K}: x^q - \alpha = 0\}\,,
	\]
	viewed as a divisor on $\bbP^1_{K}$. Let $\Gamma_{\alpha, q}$ be the closure of $\supp(\fkm_{\alpha,q})$ in $\bbP^1_{R}$. Set $X_{\alpha, q} \colonequals \bbP^1_{R} \smallsetminus \Gamma_{\alpha,q}$.
	
	For $q$ sufficiently large (depending on $K$ and $S$, but not on $\alpha$), there are no subgroup obstructions to RoS Chabauty for $X_{\alpha,q}$. 
\end{restatable*}

For each sufficiently large $q$, Theorems~\ref{thm:main-theorem}~and~\ref{thm:main-theorem-2} construct an explicit finite set of affine genus $0$ curves $\{X_{\alpha,q}\}_{\alpha}$ such that $X_{1,q}$ has no BCP obstructions to RoS Chabauty and $X_{\alpha,q}$ for $\alpha \neq 1$ has no subgroup obstructions to RoS Chabauty. The set $\{X_{\alpha,q}\}_{\alpha}$ becomes a covering collection $\{X_{\alpha,q}'\}_{\alpha}$ for $\bbP^1_R \smallsetminus \{0,1,\infty\}$ after removing `sections' at $0$ and $\infty$, as well as the section at $1$ if $\alpha=1$. Removing sections can only increase the number of $R$-points. Hence, Theorems~\ref{thm:main-theorem}~and~\ref{thm:main-theorem-2} suggest an elementary $p$-adic procedure (generalizing the strategy in \cite{poonen-19}) which is likely to compute the set $E_{K,S}$ of solutions to the $S$-unit equation in $K$.

\begin{algorithm}[Conjectured algorithm to compute $E_{K,S}$] \label{algorithm}
	\ \\
	Given: A number field $K$ not containing a CM-subfield and a set $S$ of places of $K$. \\
	Compute: The set $E_{K,S}$ of solutions to the $S$-unit equation $x + y = 1$ in $\cO_{K,S}^{\times}$. \\
	Algorithm: Using the notation and bounds from Theorems~\ref{thm:main-theorem} and \ref{thm:main-theorem-2}:
\begin{enumerate}
	\item Choose a suitable $q$ and compute the curves $X_{\alpha, q}$.
	\item Using RoS Chabauty and the Mordell-Weil sieve, compute each $X_{\alpha, q}(R)$.
	\item Compute $X_{\alpha, q}'(R)$ as a subset of $X_{\alpha, q}(R)$ by throwing out any sections which intersect the removed sections.
	\item Compute $(\bbP^1\smallsetminus\{0,1,\infty\})(R)$ as the union of the images of the $X_{\alpha, q}'(R)$.
\end{enumerate}
\end{algorithm}

The assumption in Theorem~\ref{thm:main-theorem} that $K$ does not contain a CM-subfield is essential. In Remark~\ref{rem:CM-field-RoS-Obstruction}, we show that if $K$ has a CM-subfield, the curve $X_{1,q}$ has a BCP obstruction for any choice of $q$. We expect that it is possible to overcome this obstacle to computing solutions to the $S$-unit equation via descent and RoS Chabauty by \emph{iterating} the descent. In other words, we believe it should be possible to construct a genus $0$ covering collection for $X_{1,q}$ with no subgroup obstructions even if $K$ contains a CM-subfield. We do not pursue that line of study in this article.

The main ingredient in the proof of Theorem~\ref{thm:main-theorem} is a bound on the ranks of BCP subtori coming from a careful analysis of the possible positions in the complex plane of the images of the $q$th roots of a fixed $\alpha \in R$ under an automorphism of $\bbP^1_{K}$. Controlling the number of these images which lie on the real axis is particularly important. The restriction that $K$ does not contain a CM subfield arises because fields containing a CM-subfield are characterized by the existence of an automorphism of $\bbP^1_K$ which maps the complex unit circle to the real axis in every embedding $\iota: K \hookrightarrow \bbC$.

The main ingredient in the proof of Theorem~\ref{thm:main-theorem-2} is an exact computation of the ranks of the $\mathbb Z$-points of subtori of $\Res_{\cO_{K[\sqrt[q]]{\alpha}}/\mathbb Z} \mathbb G_{m} \smallsetminus \Res_{\cO_{K}/\mathbb Z} \mathbb G_{m}$. This formula comes from a careful study of the action of Galois on character groups of subtori together with the classification of irreducible representations of $\bbZ/q\bbZ \rtimes \bbZ/2\bbZ$. In particular, we note that Theorem~\ref{thm:main-theorem-2} does not require any assumption on $K$.

\subsection{Additional context on the $S$-unit equation}

The set $E_{K,S}$ of solutions to the $S$-unit equation is finite, due to  \cite{siegel-21} in the case $K = \bbQ$ and \cite{lang-60} for general $K$, although this case is often attributed to Mahler from the 1930s. The problem of computing/bounding solutions to the $S$-unit equation has remained of substantial interest because of a wide range of applications in number theory and related fields. In arithmetic geometry, computing the set $E_{K,S}$ is roughly equivalent to computing the set of elliptic curves with good reduction outside a fixed set of primes. In dynamics, studying $E_{K,S}$ is closely related to studying periodic points of odd order in arithmetic dynamical systems. $S$-unit equations are also important in many other fields. See \cite{evertse1988s} and \cite{evertse2015unit} for a more thorough discussion of these applications.

The best known general upper bound on $\# E_{K,S}$ is $3\cdot 7^{2\#S+3 r_{1}+4r_{2}}$ where $r_{1}$ and $r_{2}$ are the number of real and complex embeddings respectively of $K$ \cite{evertse-84}. The bound in \cite{evertse-84} is not effective. (It does not give any upper bound on the height of elements of $E_{K,S}$ or any other algorithm to compute $E_{K,S}$.) The true upper bound on $\# E_{K,S}$ is typically expected to be subexponential in $\# S, r_{1},$ and $r_{2}$. 

More recent work, based on Baker's theory of linear forms in logarithms, gives effective bounds which can be used to compute $E_{K,S}$. The current state of the art (from \cite{gyory-19}) gives a bound in terms of the number of real and complex embeddings, class number, and regulator of $K$, as well as the third-largest norm of a prime ideal in $S$. These bounds are still exponential in the parameters. An algorithm based on these ideas (and many computational tools, including the LLL algorithm for lattice basis reduction) is now implemented in the Sage computer algebra system \cite{akmrvw19}. This implementation can be used to compute quickly compute $E_{K,S}$, at least when $[K:\bbQ]$ and $\# S$ are not too large. 

The author has also studied $E_{K,\emptyset}$ using a different approach based on RoS Chabauty in \cite{triantafillou-20}. In that work, the strategy was to intersect $\Res_{\cO_{K}/\bbZ} \bbP^1\smallsetminus \{0,1,\infty\}$ with the $p$-adic closure of well-chosen subgroups of $(\Res_{\cO_{K}/\bbZ} \bbG_{m}^2)(\cO_{K})$. When $3$ does not divide $[K:\bbQ]$ and $3$ splits completely in $K$, this intersection is empty. Taking an infinite union over sets cut out by subgroups, one can conclude $(\bbP^1\smallsetminus \{0,1,\infty\})(\cO_{K}) = \emptyset$ even though the set of $p$-adic points cut out by RoS Chabauty is infinite. In contrast, the present work applies RoS Chabauty in combination with descent to study $E_{K,S}$ and is not related to \cite{triantafillou-20} beyond the fact that both use techniques building on RoS Chabauty, study $E_{K,S}$ and have their roots in the author's PhD thesis. 

Recently, several authors have given new proofs of finiteness of the $S$-unit equation as a proof-of-concept of strategies to prove an effective version of Faltings' theorem. For instance, the first proof of finiteness by Kim's nonabelian Chabauty showed that $E_{K,S}$ is finite when $K = \bbQ$ \cite{kim-05}. The method has since been used to prove that $X(\bbQ)$ is finite whenever $X$ of genus $\geq 2$ is a solvable cover of $\bbP^1$ \cite{ellenberg-hast-17}. These results have since been extended to general number fields in \cite{dogra-19} and simultaneously, under Klinger's Ax-Schanuel conjecture, in \cite{hast-19}. Effective versions of the method have been used to compute the set of rational points on the `cursed' split Cartan modular curve of level $13$ \cite{balakrishnan-dogra-muller-tuitman-vonk-17} and several other modular curves \cite{bbblmtv-19}. Similarly, before extending their strategy to give a new proof of Faltings' Theorem, Lawrence and Venkatesh first gave a new proof of the finiteness of $E_{K,S}$ by similar methods \cite{lawrence-venkatesh-18}.

Several authors have also developed implementations based on classical Chabauty and non-abelian Chabauty to compute the set $E_{K,S}$ for some $[K:\bbQ]$ and $\# S$ small. One of the first such works was \cite{dan-cohen-wewers-15}.  In more recent work, \cite{best-et-al-21} reports on their recently developed Sage code (building on work of \cite{betts-dogra-20}) that uses congruence conditions from the primes in $S$ to compute some $E_{K,S}$ when $S$ is larger than a na\"ive implementation of quadratic nonabelian Chabauty would allow. In the context of the classical Chabauty's method, the rough idea is that if $x,y \in \cO_{K,S}^{\times}$ satisfy $x+y = 1$, then for each prime $\fkp \in S$, either $v_{\fkp}(x) = v_{\fkp}(y)$ or $v_{\fkp}(x) = 0$ or $v_{\fkp}(y) = 0$. So, one can use subgroups of $J(\cO_{K,S})$ in place of the full generalized Jacobian. An efficient implementation of the algorithm proposed in the present article would use these refinements to choose a smaller $q$ in Algorithm~\ref{algorithm}. Unfortunately, their refinements only mitigate the effect of primes in $S$ and do not remove the need for descent, nonabelian Chabauty, or other arguments beyond RoS Chabauty. So, for the sake of clarity of exposition, we will not discuss these refinements further in this article. 

Our (perhaps optimistic) dream is that studying the analogue of the ideas developed in this work in the proper case might one day play a role in yet another effective proof of Faltings' Theorem. We believe that if a proof based on RoS Chabauty and descent is every discovered, then it may be suitable for practical computation. Unfortunately, such a proof seems far beyond our current technology. At a minimum, such a proof would require a better understanding of how ranks of Jacobians grow in towers of \'etale covers, how subabelian varieties of abelian varieties with repeated isogeny factors intersect other subvarieties, and how $p$-adic analytic subgroup schemes interest algebraic subvarieties. These topics are beyond the scope of this article and the needed knowledge is well beyond current mathematical understanding. 

\subsection{Notation and Conventions}

Throughout, we use the following notation and conventions:
\begin{itemize}
	\item $\bbQ \subset K' \subset K$ are number fields.
	\item $d = [K: \bbQ]$ and $d' = [K':\bbQ]$.
	\item $S_{0}$ is a finite set of finite places of $\bbQ$.
	\item $S$ is the set of places of $K$ lying above $S_{0}$ and $S'$ is the set of places of $K'$ lying above $S_{0}$.
	\item $\Sigma_{\infty}$ is the set of infinite places of $K$. 
	\item If $\fkp$ is a finite place of $X$ then $G_{\fkp}$ is the decomposition group at $\fkp$. If $\fkp \in \Sigma_{\infty}$, then $G_{\fkp}$ is either the trivial group if $\fkp$ is a complex place and is isomorphic to $\bbZ/2\bbZ$ (with non-trivial element corresponding to complex conjugation) if $\fkp$ is a real place. 
	\item $R = \cO_{K,S}$ and $R' = \cO_{K', S'}$ and $R_{0} = \cO_{\bbQ, S_{0}}$ are the rings of $S, S',$ and $S_{0}$-integers, respectively. Note that conventions around $S$-integers are not entirely standardized. While we assume $S$ consists only of finite places, other articles may require that $S$ contains all infinite places implicitly in this notation.
	\item Given a prime $\fkp$ of $R$, the completion of $K$ at $\fkp$ is $K_{\fkp}$, the ring of integers in $K_{\fkp}$ is $R_{\fkp}$, and the residue field is $k_{\fkp}$.
	\item For $F$ a number field, $r_{1}(F)$ is the number of real embeddings of $F$ and $r_{2}(F)$ is the number of complex embeddings of $F$. 
	\item Given a divisor $\fkm$ on a curve $C/K$, we write $\supp(\fkm)$ for the support of $\fkm$.
	\item The dimension of a flat scheme over $R$ or $R_0$ refers to the \emph{relative dimension} of the scheme, or equivalently the dimension of the generic fiber.
	\item If $P$ is a $\overline{\bbQ}$-point of a curve defined over a number field $K$, then $K(P)$ is the minimal field of definition of $P$. 
	\item A CM field is a totally complex number field which is a quadratic extension of a totally real number field. 
	\item  RoS Chabauty is shorthand for Chabauty's method applied to a restriction of scalars of a curve $X$ equipped with a map to the corresponding restriction of scalars of the generalized Jacobian $J$ of $X$.
	
\end{itemize}

We also use the following conventions:

Given a set of algebraic $S$-integers $\{x_{1}, \dots, x_{n}\}$ which is stable under the action of $\Gal(\overline{K}/K)$, let $f \in R[x]$ be the monic square-free polynomial with roots $x_{1}, \dots, x_{r}$. Define 
\[
\bbP^{1}_{R} \smallsetminus (\{x_1, \dots, x_{r} \} \cup \{\infty\}) \colonequals \Spec R[x, f(x)^{-1}]\,.
\]
If the $x_{i}$ are $S$-units, we define $\bbP^{1}_{R} \smallsetminus \{x_1, \dots, x_{r} \}$ by gluing $\bbP^{1}_{R} \smallsetminus \{x_1, \dots, x_{r}, \infty \}$ and $\bbP^{1}_{R} \smallsetminus \{x_1^{-1}, \dots, x_{r}^{-1}, \infty \}$ by identifying the subsets $\bbP^{1}_{R} \smallsetminus \{0, x_1, \dots, x_{r}, \infty \}$ and $\bbP^{1}_{R} \smallsetminus \{0, x_1^{-1}, \dots, x_{r}^{-1}, \infty \}$ via the morphism $t \mapsto t^{-1}$.

\subsection{Acknowledgements}

Thank you to Nils Bruin, Pete Clark, K{\'a}lm{\'a}n Gy{\H{o}}ry, Dino Lorenzini, Bjorn Poonen, Padmavathi Srinivasan, and to the anonymous referee for helpful feedback on drafts of this work.

\section{Background on RoS Chabauty} \label{sec:setup}

\subsection{Generalized Jacobians of punctured curves} \label{sec:prelim-defs}

In order to extend RoS Chabauty to affine curves, we need a notion of the generalized Jacobian of a curve $X$ over $R$. It will be helpful to slightly restrict the relative curves $X$ that we allow.

Our definitions are chosen so that 
\begin{enumerate}
	\item $J$ is a commutative group scheme of finite type over $R$, so that $J(R)$ is a finitely-generated abelian group;
	\item The generic fiber $J_{K}$ of $J$ is the generalized Jacobian of the generic fiber $X_{K}$ of $X$; 
	\item An Abel-Jacobi map from the generic fiber $X_{K}$ to its generalized Jacobian $J_{K}$ extends to a morphism from $X$ to $J$ whenever it is defined with respect to a divisor of $X_{K}$ which extends to a divisor of $X$. 
\end{enumerate}

\begin{definition}\label{def:models}
	Let $X$ be a flat relative curve over finite-type over $R$ which is smooth, separated, and has geometrically connected fibers. Let $C$ be a smooth proper curve over $K$ equipped with a reduced effective divisor $\fkm$. Given any proper regular model $\cC$ for $C$, let $\Gamma = \overline{\supp(\fkm)}$ be the closure of $\fkm$ in $\cC$ in the Zariski topology.
	
	We say that $X$ is a regular model for $(C,\fkm)$ if there is some proper regular model $\cC/R$ of $C$ such that $X \cong \cC \smallsetminus \Gamma$.
\end{definition}

\begin{remark}
	There may exist pairs $(C,\fkm)$ of a curve and reduced effective divisor with no regular models, since removing the closure of $\fkm$ may disconnect finitely many fibers of a regular model of $C$. However, after possibly enlarging $S$ by a finite set of primes, the pair $(C,\fkm)$ will have a regular model. When applying Chabauty's method, we generally prefer to take the smallest such enlargement of $S$.
\end{remark}

Regular models $X/R$ are well-adapted to Chabauty's method because they are equipped with an $R$-embedding into their \emph{generalized Jacobian}, a commutative group scheme $J_{X}$ of finite type over $R$ with the property that $J_{X}(R)$ is a finitely-generated abelian group. See Chapter 5 of \cite{serre-88} for the theory of generalized Jacobians of curves over number fields. We now describe what we mean by the generalized Jacobian of a regular model of $(C,\fkm)$.

If $X$ is a regular model of $(C,\fkm)$, its generic fiber $X_{K}$ has the form $C \smallsetminus \supp(\fkm)$. The generalized Jacobian $J_{C,\fkm}$ of $C$ with reduced modulus $\fkm$ is a semi-abelian group scheme (an extension of an abelian variety by a torus) over $K$. Let $\cJ_{X}$ be the (locally of finite type) N\'{e}ron model for $J_{C,\fkm}$ and let $J_{X}$
be the connected component of the identity of $\cJ_{X}$. (See \cite{bosch-90} \S~10.2 Theorem~2.) We abuse notation slightly and call $J_{X}$ the \emph{generalized Jacobian} of $X$. 

\begin{lemma}
	If $X/R$ is a regular model of $(C,\fkm)$, then $J_{X}(R)$ is a finitely-generated abelian group.
\end{lemma}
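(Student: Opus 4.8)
The plan is to exploit that the generalized Jacobian $J := J_{C,\fkm}$ is a semi-abelian variety, reducing to the Mordell--Weil theorem for its abelian part and Dirichlet's unit theorem for its toric part, after enlarging $S$ so that $J$ has everywhere semi-abelian reduction.

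First I would spread out. Since $J$ is an extension $1 \to T \to J \to A \to 1$ of an abelian variety $A$ by a torus $T$ over $K$, there is a finite set $S_{1} \supseteq S$ of finite places of $K$ such that, with $R_{1} := \cO_{K, S_{1}}$: (i) $T$, $A$ (which has good reduction outside a finite set), and the extension class all extend over $R_{1}$, so that $J$ extends to a semi-abelian scheme $\mathcal{J}$ over $R_{1}$ sitting in an exact sequence $1 \to \mathcal{T} \to \mathcal{J} \to \mathcal{A} \to 1$ of smooth $R_{1}$-group schemes with $\mathcal{T}$ a torus and $\mathcal{A}$ an abelian scheme; and (ii) $\mathcal{T}$ is split by a finite extension $L/K$ unramified outside $S_{1}$. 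By \cite{bosch-90}, a semi-abelian scheme over a Dedekind base is the connected component of the identity of the locally of finite type N\'eron model of its generic fibre, and the formation of this identity component commutes with localizing the base; hence $J_{X} \times_{R} R_{1} \cong \mathcal{J}$.

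Next I would reduce to showing $\mathcal{J}(R_{1})$ is finitely generated. Because N\'eron models are separated and $\Spec R_{1}$ is dense in $\Spec R$, restriction identifies $J_{X}(R)$ with a subgroup of $J_{X}(R_{1}) = \mathcal{J}(R_{1})$; as $J_{X}$ is a commutative group scheme, $J_{X}(R)$ is an abelian group, and a subgroup of a finitely generated abelian group is finitely generated, so it suffices to treat $\mathcal{J}(R_{1})$. Taking $R_{1}$-points in the exact sequence gives $0 \to \mathcal{T}(R_{1}) \to \mathcal{J}(R_{1}) \to \mathcal{A}(R_{1})$. Here $\mathcal{A}(R_{1}) = A(K)$ --- an abelian scheme is proper, or one invokes the N\'eron property --- which is finitely generated by the Mordell--Weil theorem, so $\mathcal{J}(R_{1})/\mathcal{T}(R_{1})$ embeds into a finitely generated group. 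And $\mathcal{T}(R_{1})$ is finitely generated: by (ii) a surjection of character lattices $\bbZ[\Gal(L/K)]^{n} \twoheadrightarrow X^{*}(\mathcal{T})$ presents $\mathcal{T}$ as a closed $R_{1}$-subgroup scheme of $\Res_{R_{L}/R_{1}} \bbG_{m}^{n}$ (with $R_{L}$ the ring of integers of $L$ localized at the places above $S_{1}$), so $\mathcal{T}(R_{1}) \hookrightarrow (R_{L}^{\times})^{n}$, which is finitely generated by Dirichlet's unit theorem. An extension of a finitely generated abelian group by a finitely generated abelian group is finitely generated, so $\mathcal{J}(R_{1})$, hence $J_{X}(R)$, is finitely generated.

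The classical inputs (Mordell--Weil, Dirichlet) are the substance; the genuinely new content is the packaging, whose one delicate point is the pair of standard N\'eron model facts used above --- that a semi-abelian scheme over a Dedekind base is the identity component of the lft N\'eron model of its generic fibre, and that this identity component is compatible with localization of the base. I do not expect any real obstacle beyond carefully invoking these.
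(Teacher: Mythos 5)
Your argument is correct, and it runs through the same classical core as the paper --- splitting $J$ into its toric and abelian parts and quoting Dirichlet's ($S$-)unit theorem and Mordell--Weil --- but the packaging differs in two places. The paper first treats the case of a \emph{split} torus directly over $R$, using \cite{bosch-90} \S 10.1 Prop.~7 to present $J_{X}$ itself as an extension of $\cA^{\circ}$ by $\bbG_{m,R}^{t}$, and then handles a non-split torus by base-changing the lft N\'eron model along (essentially \'etale) extension of the base to the ring of $S_{L}$-integers of a splitting field $L$, identifying $J_{X}(R)$ with a subgroup of $J_{X_{R_{L}}}(R_{L})$ and invoking the split case over $L$. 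You instead stay over $K$: you localize the base (enlarge $S$), spread $J$ out to a semi-abelian scheme $\mathcal{J}$ over $R_{1}$, identify $\mathcal{J}$ with $(J_{X})_{R_{1}}$, and then control the possibly non-split torus by a surjection of character lattices $\bbZ[\Gal(L/K)]^{n} \twoheadrightarrow X^{*}(\mathcal{T})$, i.e.\ a closed immersion $\mathcal{T} \hookrightarrow \Res_{R_{L}/R_{1}} \bbG_{m}^{n}$, so that $\mathcal{T}(R_{1})$ sits inside $(R_{L}^{\times})^{n}$. Both routes are sound; what each buys is roughly this. The paper's argument uses only the two compatibilities it cites from \cite{bosch-90} (the extension structure of the identity component in the split case, and compatibility of lft N\'eron models with \'etale base change), at the cost of passing to the splitting field of the torus. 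Your argument never leaves $K$ and makes the toric rank bound a piece of pure character theory, but it leans on the heavier ``uniqueness of semi-abelian models'' fact --- that a semi-abelian scheme over a Dedekind base is the identity component of the lft N\'eron model of its generic fibre, compatibly with localization. That fact is true and standard (it follows from the N\'eron mapping property together with semi-stable reduction theory, or from the extension theorem for homomorphisms of semi-abelian schemes over a normal base in Faltings--Chai, Ch.~I), but it is not a single quotable statement of \cite{bosch-90}; if you want to stay strictly within that reference you could instead present $(J_{X})_{R_{1}}$ as an extension of $\cA^{\circ}_{R_{1}}$ by the identity component of the torus's lft N\'eron model via \S 10.1 Prop.~7, exactly as the paper does, and keep the rest of your argument unchanged.
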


\begin{proof}
	The semi-abelian variety $J_{X_{K}}$ is an extension of an abelian variety $A$ by a torus $T$. The group structure on $J_{X_{K}}(K)$ is abelian, so the same is true for $J_{X}(R)$.
	
	(Part 1.) We first consider the case where $T \cong \bbG_{m,K}^t$ is a split torus. Let $\cA$ be the N\'eron model of $A$.
	From the proof of \cite{bosch-90} \S10.1 Prop.~7, $J_{X}$ is an extension of $\cA^{\circ}$ by $\bbG_{m,R}^{t}$. So there is an exact sequence
	\[
	0 \to \bbG_{m,R}^{t}(R) \to J_{X}(R) \to \cA^{\circ}(R) \to 0\,.
	\]
	Now, $\cA^{\circ}(R) \subset A(K)$ is finitely-generated by the Mordell-Weil theorem and $\bbG_{m,R}^{t}(R) = (R^{\times})^t$ is finitely-generated by Dirichlet's unit theorem, so $J_{X}(R)$ is finitely-generated in this case.
	
	(Part 2.) Now suppose $T$ is a non-split torus. Let $L$ be a finite extension of $K$ where $T$ splits. Let $S_{L}$ be a set of places of $L$ lying above $S$ together with any primes which ramify in $L/K$. Let $R_{L}$ be the set of $S_{L}$-integers in $L$. Taking l.f.t N\'eron models commutes with \'etale base change (\cite{bosch-90} \S10.1 Prop.~3) so $(\mathcal J_{X})_{L} = \mathcal J_{X_{L}}$. The identity component of each fiber of $\mathcal J_{X}$ is smooth, connected, and contains a rational point (the identity element ), so each fiber of $\mathcal J_{X}$ is geometrically connected. In particular, $J_{X_{R_{L}}} = (J_{X})_{R_{L}}$. This allows us to identify $J_{X}(R)$ as a subgroup of $J_{X_{R_{L}}}(R_{L})$, which is a finitely-generated abelian group by Part 1. Hence, $J_{X}(R)$ is a finitely-generated abelian group.
\end{proof}

Suppose we are given a point $P \in X(R)$. Then $X_{K}$ is equipped with an Abel-Jacobi $K$-morphism $j_{K}: X_{K} \hookrightarrow J_{X_{K}}$ which sends $P_{K}$ to the identity element of $J_{X_{K}}$.\footnote{If $X_K$ is proper, this is standard. If $X_K$ is affine, \cite{serre-88} describes a $K$-morphism $X_K \hookrightarrow J_{X_{K}}$. One can ensure that $P_{K}$ maps to the identity element by translating by $-1$ times the image of $P_{K}$.}
By the N\'{e}ron mapping property, $j_{K}$ extends to a map $j: X \hookrightarrow \cJ_{X}$ which sends $P$ to the identity section of the group scheme $J_{X}$. Since the fibers of $X$ are geometrically connected and their image includes the identity, we see that the image of $X$ under $j$ lies in $J_{X}$. 

In the absence of an integral point $P \in X(R)$, we can define a finite $R$-morphism $j: X \to J_{X}$ with respect to any horizontal divisor of $X$. By a mild abuse of the language we will call any such map an Abel-Jacobi map.

\subsubsection{Generalized Jacobians of genus $0$ curves} \label{sec:gen-jac-g0}

For the remainder of this article, we will primarily be concerned with the case where $X$ is regular model of $(C,\fkm)$ where $C$ has genus zero. In preparation, we recall some facts about the structure of the generalized Jacobian of such a curve. 

Suppose $C/K$ is genus $0$, that $\fkm$ is a reduced effective divisor on $C$ and let $X/R$ be a regular model for $(C,\fkm)$. Note that we do not assume that $C$ has a $K$-point, so $C$ may not be isomorphic to $\bbP^1_{K}$. Let $P_{1}, \dots, P_{c}$ be distinct irreducible divisors so that $\fkm = P_1 + \cdots + P_{c}$. For $i \in \{1, \dots, c\}$, let $L_{i}$ be the residue field of $P_{i}$, let $S_{i}$ be the set of places of $L_{i}$ which lie over $S$, and let $R_{i}$ be the ring of $S_{i}$-integers in $L_{i}$. Then, the generalized Jacobian of $C$ with modulus $\fkm$ is 
\begin{align} \label{eqn:jacobian-punctured-P1-structure-over-K}
J_{C,\fkm} \cong \left(\prod_{i=1}^{c} \Res_{L_i/K} \bbG_{m, L_i}\right)/\Delta(\bbG_{m, K})\,,
\end{align}
where the $\Delta(\bbG_{m, K})$ denotes the diagonal embedding of $\bbG_{m,K}$ into the product of the Weil restrictions. The generalized Jacobian of $X$ is the connected component of the lft-N\'eron model of $J_{C,\fkm}$. Using Proposition~4.2 of \cite{liu-lorenzini-01} to compute the quotient, this is
\begin{align} \label{eqn:jacobian-punctured-P1-structure}
J_{X} \cong \left(\prod_{i=1}^{c} \Res_{R_{i}/\OKS} \bbG_{m, R_{i}}\right)/\Delta(\bbG_{m, \OKS})\,,
\end{align}
where $\Delta(\bbG_{m, \OKS})$ denotes the diagonally embedded copy of $\bbG_{m, \OKS}$. If $L_{i} = K$ for some $i$, the formula can be simplified by leaving out the $i$th component and the quotient. 

The expression (\ref{eqn:jacobian-punctured-P1-structure}) makes it easy to compute the dimension of $J_X$ and rank of $J_X(\OKS)$. 
Since $R_{i} = \cO_{L_{i},S_{i}}$, we have
\[
\dim J_{X}  = \sum_{i = 1}^{c} [L_i:K] - [K:K] 
\]
and 
\begin{align} \label{eqn:jacobian-punctured-P1-rank}
\rank J_{X}(\OKS) 
= & \sum_{i =1}^{c} \rank \cO_{L_i,S_i}^\times - \rank \OKS^\times  \\
= & \sum_{i = 1}^{c} [r_1(L_i) + r_2(L_i) + \# S_{i} - 1] - [r_1(K) + r_2(K) + \#S - 1]\,. \nonumber
\end{align}

We can also express the rank in terms of the action of the absolute Galois group of $K$ on the set of punctures of our genus zero curve. We state the general result. 

\begin{lemma} \label{lem:rank+dim-genus-0-jac}
	Let $C/K$ be a genus $0$ curves and let $\fkm$ be a reduced effective divisor on $C$. Let $\cC/R$ be a proper regular model of $C$, let $\Gamma = \overline{\supp(\fkm)}$ and let $X = \cC/\Gamma$ be a regular model of $(C,\fkm)$. View $\Gamma(\overline{K}) =  \supp(\fkm)$as a set of $\overline{K}$-points on the generic fiber $X_{K}$. Let $G = \Gal(\overline{K}/K)$ be the absolute Galois group of $K$. Let $\Sigma_{\infty}$ be the set of infinite places of $K$. For each finite place $\fkp$, let $G_{\fkp}$ denote the decomposition group of $\fkp$. For $\fkp \in \Sigma_{\infty}$, let $G_{\fkp}$ be the trivial group if $\fkp$ is a complex place and $\bbZ/2\bbZ$ (corresponding to complex conjugation) if $K$ is a real place.
	
	Then, the generic fiber $J_{C,\fkm}$ of $J_{X}$ is a torus,  
	\begin{align*}
	\dim J_{X} = \# \Gamma(\overline{K}) - 1,
	\end{align*}
	and $J_{X}(\OKS)$ is an abelian group of rank
	\begin{align*}
	\rank J_{X}(\OKS) = &  - \left(\# (G\backslash \Gamma(\overline{K})) - 1\right) + \sum_{\fkp \in S \cup \Sigma_{\infty}} \left(\# (G_{\fkp} \backslash \Gamma(\overline{K})) - 1\right)\,.
	\end{align*}
\end{lemma}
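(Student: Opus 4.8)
The plan is to unwind the structure formula \eqref{eqn:jacobian-punctured-P1-structure-over-K} and \eqref{eqn:jacobian-punctured-P1-structure} into a statement about Galois lattices, and then compute ranks by counting orbits. First I would observe that $\Gamma(\overline K) = \supp(\fkm)(\overline K)$ is a finite $G$-set which decomposes into orbits $\mathcal O_1, \dots, \mathcal O_c$, where $\mathcal O_i$ corresponds to the irreducible divisor $P_i$ with residue field $L_i$; concretely $\mathcal O_i \cong G/G_{L_i}$ as $G$-sets, so $\# \mathcal O_i = [L_i:K]$ and hence $\# \Gamma(\overline K) = \sum_{i=1}^c [L_i:K]$. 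Since the generalized Jacobian of a genus-$0$ curve has no abelian part (the only line bundles of degree $0$ on $C_{\overline K} \cong \bbP^1_{\overline K}$ are trivial), \eqref{eqn:jacobian-punctured-P1-structure-over-K} already tells us $J_{C,\fkm}$ is a torus of dimension $\sum_{i=1}^c [L_i:K] - 1 = \#\Gamma(\overline K) - 1$, giving the dimension formula. The only subtlety here is the degenerate case where some $L_i = K$ (i.e. $\mathcal O_i$ is a singleton): then the diagonal $\bbG_m$ can be absorbed into that factor and one checks the dimension count still reads off correctly, and similarly if $C$ has no rational point one works with the full product over all orbits. This is the routine part.

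For the rank formula, the key input is \eqref{eqn:jacobian-punctured-P1-rank}, which after identifying $\rank \mathcal O_{L,S_L}^\times = r_1(L) + r_2(L) + \#S_L - 1$ via Dirichlet's $S$-unit theorem, expresses everything in terms of the fields $L_i$. The plan is to rewrite each term $r_1(L_i) + r_2(L_i) + \#S_i$ as a sum of local orbit counts. The standard dictionary is: for a finite separable extension $L/K$ and a place $v$ of $K$, the number of places of $L$ above $v$ equals the number of $G_v$-orbits on $\Hom_K(L, \overline K) \cong G/G_L$, i.e. $\# (G_v \backslash (G/G_L))$. Applying this with $v$ running over the real and complex infinite places and over the places in $S$: summing $\#(G_v \backslash \mathcal O_i)$ over all $v \mid \infty$ recovers $r_1(L_i) + 2 r_2(L_i)$ — wait, more carefully, a real place $v$ of $K$ contributes the number of real places of $L_i$ above it plus the number of complex places, which is exactly $\#(G_v \backslash \mathcal O_i)$ since $G_v = \bbZ/2$ acts on $\mathcal O_i$ and orbits of size $2$ are complex places while fixed points are real places; and for a complex place $v$, $G_v$ is trivial so $\#(G_v\backslash \mathcal O_i) = \#\mathcal O_i = [L_i:\bbC\text{-places}]$. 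So $\sum_{v\mid\infty}\#(G_v\backslash\mathcal O_i) = r_1(L_i) + r_2(L_i)$, the archimedean rank contribution. Likewise $\sum_{\fkp \in S}\#(G_\fkp \backslash \mathcal O_i) = \#S_i$. And summing $\#\mathcal O_i = \#(G\backslash\mathcal O_i) \cdot(\text{something})$ — actually $\mathcal O_i$ is a single $G$-orbit so $\#(G\backslash \mathcal O_i) = 1$, which will track the "$-1$"s.

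Putting these substitutions into \eqref{eqn:jacobian-punctured-P1-rank}: the sum $\sum_i [r_1(L_i) + r_2(L_i) + \#S_i - 1]$ becomes $\sum_i \sum_{\fkp \in S \cup \Sigma_\infty} \#(G_\fkp \backslash \mathcal O_i) - \sum_i \#(G\backslash \mathcal O_i)$; interchanging the order of summation and using $\bigsqcup_i \mathcal O_i = \Gamma(\overline K)$ collapses this to $\sum_{\fkp \in S \cup \Sigma_\infty} \#(G_\fkp \backslash \Gamma(\overline K)) - \#(G \backslash \Gamma(\overline K))$. Similarly the subtracted term $[r_1(K) + r_2(K) + \#S - 1]$ equals $\sum_{\fkp \in S \cup \Sigma_\infty} 1 - 1 = \#(S \cup \Sigma_\infty) - 1$, i.e. for each $\fkp$ the count $\#(G_\fkp \backslash \{\text{pt}\}) = 1$ if we think of $\rank \OKS^\times$ via the unit group of $K$ itself — more cleanly, $\#S = \sum_{\fkp \in S} \#(G_\fkp \backslash \Gamma)/\ldots$; the honest bookkeeping is just to note $r_1(K) + r_2(K) + \#S - 1 = \#(S \cup \Sigma_\infty) - 1$ and then distribute one "$\#(G_\fkp\backslash\Gamma(\overline K)) - 1 \mapsto$ its own $-1$" per place. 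Carrying out this regrouping yields exactly
\[
\rank J_X(\OKS) = -\bigl(\#(G\backslash \Gamma(\overline K)) - 1\bigr) + \sum_{\fkp \in S \cup \Sigma_\infty}\bigl(\#(G_\fkp \backslash \Gamma(\overline K)) - 1\bigr),
\]
as claimed. The main obstacle I anticipate is purely organizational: keeping the "$-1$" terms — one global, one per place in $S \cup \Sigma_\infty$ — correctly matched between the two sides of \eqref{eqn:jacobian-punctured-P1-rank}, and handling the edge case $L_i = K$ uniformly (where \eqref{eqn:jacobian-punctured-P1-structure} drops that factor and the diagonal quotient simultaneously, so the orbit count $\#(G_\fkp\backslash\mathcal O_i)=1$ for that singleton orbit correctly contributes a $+1-1=0$ that matches dropping it). No deep input beyond Dirichlet's unit theorem and the orbit-counting description of splitting of places is needed.
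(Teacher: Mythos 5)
Your argument is correct, and it reaches the stated orbit-count formula by exactly the right bookkeeping: the dictionary ``places of $L_i$ above $v$ $\leftrightarrow$ $G_v$-orbits on $\mathrm{Hom}_K(L_i,\overline K)\cong \mathcal O_i$'' (with real $v$ giving fixed points for real places and $2$-element orbits for complex ones), summed over $v\in S\cup\Sigma_\infty$ and over the $G$-orbits $\mathcal O_i$, does collapse \eqref{eqn:jacobian-punctured-P1-rank} to $-(\#(G\backslash\Gamma(\overline K))-1)+\sum_{\fkp\in S\cup\Sigma_\infty}(\#(G_\fkp\backslash\Gamma(\overline K))-1)$, and the torus/dimension claims follow from \eqref{eqn:jacobian-punctured-P1-structure-over-K} since a genus $0$ curve has trivial Jacobian. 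Note, though, that the route is different from the one the paper points to: the paper omits the proof and refers to the character theory of tori (Theorem~8.7.2 of \cite{neukirch-schmidt-wingberg-08}, or \cite{eisentrager-03}), where one writes $\rank T(\cO_{K,S})$ as an inner product of the character of the cocharacter/character lattice with the permutation character on the relevant places and then applies Frobenius reciprocity --- the same mechanism used later in Lemma~\ref{lem:rank-anisotropic}. Your version is more elementary and self-contained for this particular torus, since the lattice is a permutation module and everything reduces to Dirichlet's $S$-unit theorem plus counting; the character-theoretic version buys uniformity (it handles arbitrary, not necessarily permutation-induced, subtori, which is what the paper actually needs in Sections~4--6). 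One caveat for self-containedness: you take \eqref{eqn:jacobian-punctured-P1-rank} as given, and that step itself deserves a sentence --- from the exact sequence $1\to\bbG_{m,R}\to\prod_i\Res_{R_i/R}\bbG_{m,R_i}\to J_X\to 1$ one gets $\rank J_X(R)=\sum_i\rank R_i^\times-\rank R^\times$ only after noting that the cokernel of $\prod_i R_i^\times\to J_X(R)$ injects into $\mathrm{Pic}(R)$, which is finite; with that remark added, your proof is complete.
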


{ 
	We omit the proof, which is a fairly straightforward computation in the character theory of tori. The rank computation can be found, for example, Theorem~8.7.2 of \emph{Cohomology of Number Fields} by Neukirch, Schmidt, and Wingberg \cite{neukirch-schmidt-wingberg-08}, or in Chapter 6 of Eisentr\"ager's Ph.D. Thesis \cite{eisentrager-03}.
}

When $X$ is a subscheme of $\bbA^1_{R}$, it is possible to define an Abel-Jacobi map $j: X \to J$ without reference to an integral point on $X$. We give two examples since we will not need this fact going forward and the general case is notationally burdensome.

\begin{example}
	If $X = \bbP^1_{\OKS} \smallsetminus \{0,1,\infty\} = \bbA^1_{\OKS} \smallsetminus \{0,1\}$, then $J_{X_{K}} \cong \bbG_{m, K} \times \bbG_{m,K}$. Then, $J_{X} \cong \bbG_{m, \OKS} \times \bbG_{m, \OKS}\,$ and the Abel-Jacobi map is 
	\begin{align*}
	j : X & \to J_{X} \\
	x & \mapsto (x, x-1)\,,
	\end{align*}
	where we view $X \subset \mathbb A^1_{\OKS}$ and use the simplified form for the generalized Jacobian.
\end{example}

\begin{example}
	Suppose $\sqrt{2} \notin K$, set $L = K(\sqrt{2})$, and let $R'$ be the integral closure of $R$ in $L$. Suppose also that $\{1, \sqrt{2}\}$ is an integral basis for $R'$ over $R$.
	
	If $X = \bbP^1_{\OKS} \smallsetminus \{\pm \sqrt{2}, \infty\} = \bbA^1_{R} \smallsetminus \{\pm \sqrt{2} \}$, then $J_{X_{K}} \cong \Res_{L/K} \bbG_{m,L}$ and 
	\begin{align*}
	J_{X} & \cong \Res_{R'/R} \bbG_{m, R'} \\
	& = \Spec R[x_1, x_2, y_1, y_2]/(x_1y_1 + 2 x_2 y_2 - 1, x_1y_2 + x_2 y_1)\,.
	\end{align*}
	Then, we have the Abel-Jacobi map
	\begin{align*}
	j : X & \to J_{X} \\
	x & \mapsto \left(x, -1, \frac{x}{x^2 - 2}, \frac{1}{x^2 - 2}\right) ``= x - \sqrt{2}\,. \text{''}
	\end{align*}
\end{example}

\subsection{Chabauty's method for curves} \label{sec:classical-chabauty}

Before discussing the higher-dimensional case, we recall Chabuaty's $p$-adic method for computing the set of $\OKS$-points on a regular model $X$ of some curve/reduced effective divisor pair $(C,\fkm)$. Let $J$ be the generalized Jacobian of $X$. In modern language, Chabauty and Skolem proved that if $\rank J(\OKS) \leq \dim J - 1$, then $X(\OKS)$ is finite. We now summarize their argument.

Suppose there is some $P_{0} \in X(R)$. (Otherwise, $X(R)$ is empty and therefore finite.) Let $j: X \to J$ be the Abel-Jacobi map with respect to $P_{0}$. Set $g = \dim J$.
Fix a prime $\fkp$ of $R$ of good reduction for $X$. Say $\fkp$ lies over the rational prime $p$.  Then, $J(K_{\fkp})$ in a $p$-adic Lie group, so it is equipped with a \emph{logarithm} map to its Lie algebra, which is isomorphic to $K_{\fkp}^{g}$. Concretely, if we fix a basis $(\omega_{1}, \dots, \omega_{g})$ for $H^{0}(J_{K}, \Omega^1)$, the map is given in coordinates by the abelian integrals
\begin{align*}
\log: J(\cO_{K_{\fkp}}) &    \to      K_{\fkp}^{g} \,, \\
Q     &  \mapsto    \left( \int_{O}^{Q} \omega_{i} \right)_{i} \,.
\end{align*}
Let $k_{\fkp}$ be the residue field at $\fkp$. The kernel of $\log$ is isomorphic to $J(k_{\fkp})$. In particular, the fibers are finite. There is a commutative diagram
\begin{equation}\label{eqn:Chabauty-diagram}
\begin{tikzcd} 
X(\OKS) \arrow[hook]{r} \arrow{d}{j} & X(\cO_{K_{\fkp}}) \arrow{d}{j}  \arrow{dr} &  \\
J(\OKS) \arrow[hook]{r}  & J(\cO_{K_{\fkp}}) \arrow{r}{\log} & K_{\fkp}^{g} \,.
\end{tikzcd}
\end{equation}
The image of $J(R)$ in $K_{\fkp}^{g}$ is contained in a $K_{\fkp}$-linear subspace of dimension at most $\rank J(R)$. On the other hand, Chabauty proved that $j(X(\cO_{K_{\fkp}}))$ is Zariski-dense in $K_{\fkp}^{g}$. 
In particular, if the classical Chabauty inequality 
\begin{align} \label{eqn:classical-Chabauty-inequality}
\rank J(\OKS) \leq \dim J - 1
\end{align}
is satisfied, then $\log X(K_{\fkp})$ is not contained in $\Span_{K_{\fkp}}\log J(\OKS)$. 
Since $\log X(\cO_{K_{\fkp}})$ is a $\fkp$-adic analytic curve, the intersection 
$\log X(\cO_{K_{\fkp}}) \cap \Span_{K_{\fkp}} \log J(R)$ in $K_{\fkp}^{g}$ is finite. Since $\log$ has finite kernel, the intersection
\begin{align}
X(\cO_{K_{\fkp}})_{S,1} \colonequals \log^{-1}\left( X(\cO_{K_{\fkp}}) \cap \Span_{K_{\fkp}} \log J(R)\right)
\end{align}
is also finite. Since $X(\OKS) \subset X(\cO_{K_{\fkp}}) \cap J(\OKS) \subset J(\cO_{K_{\fkp}})\,,$ the set $X(\OKS)$ is finite as well.

In the case of $X$ proper, Coleman \cite{coleman-85} showed how to use this strategy to show that $\# X(\OKS)$ is contained in an explicitly-computable subset of $X(\cO_{K_{\fkp}})$ of size at most $X(k_{\fkp}) + 2g - 2$. See the excellent survey paper of McCallum-Poonen \cite{mcCallum-poonen-12} for more detail.

\begin{remark}
	When $K \neq \bbQ$, the rank bound in Chabauty's method is inefficient in two ways.
	\begin{enumerate}
		\item (If $[K_{\fkp}:\bbQ_{p}] > 1$.) While $\Span_{K_{\fkp}}\log J(\OKS)$ is the smallest $K_{\fkp}$-subspace of $K_{\fkp}$ containing $J(\OKS)$, it is typically much larger than $\overline{\log J(\OKS)} = \Span_{\bbQ_{p}} \log J(\OKS)$, the saturation of the closure of $\log J(\OKS)$ in the $p$-adic topology on $K_{\fkp}^{g}$. 
		\item (If $[K_{\fkp}:\bbQ_{p}] < [K:\bbQ]$.) Using only the $\cO_{K_{\fkp}}$-points for Chabauty's method ignores all information coming from other primes above $p$. Combining Chabauty's method at all primes above $p$ has the potential to compute $X(\OKS)$ when $J(\OKS)$ has larger rank.
	\end{enumerate}
	The solution to both inefficiencies is to apply an analogue of Chabauty's method to $\Res_{K/\bbQ} X$ instead. Calculating expected dimensions suggests that Chabauty's method applies to $\Res_{K/\bbQ} X$ would suffice to compute $X(K)$ whenever $\rank J(K) \leq [K:\bbQ] \cdot (g - 1)$. Although we expect this to hold in generic examples, it is not sufficient in several particular examples.
\end{remark}

\subsection{Chabauty's method for restrictions of scalars} \label{sec:RoSChabauty}

Chabauty's method extends to a strategy which can be used to compute integral points on higher-dimensional schemes in some cases. 
Suppose that $W$ is a scheme over $\OKS_{0}$ equipped with a morphism $j$ to its Albanese $A$ (or any commutative group scheme over $\OKS_{0}$ such that $\log(j(Z(\bbZ_{p}))$ is Zariski dense in $\log(W(\bbZ_{p}))$.) By replacing $X$ with $W$ and $J$ with $A$ in \eqref{eqn:Chabauty-diagram}, we get the diagram
\begin{equation}\label{eqn:Chabauty-diagram2}
\begin{tikzcd} 
W(\OKS_{0}) \arrow[hook]{r} \arrow{d}{j} & W(\bbZ_{p}) \arrow{d}{j}  \arrow{dr} &  \\
A(\OKS_{0}) \arrow[hook]{r}  & A(\bbZ_{p}) \arrow{r}{\log} & \bbQ_{p}^{\dim A} \,.
\end{tikzcd}
\end{equation}

If the intersection of $\Span_{\bbQ_{p}} \log A(\OKS_{0})$ and $\log \circ j(X(\bbZ_{p}))$ is finite, then $W(R_{0})$ is contained in an explicitly computable finite set of fibers of $j$. When $\rank A(R_{0}) \leq \dim A_{\bbQ} - \dim W_{\bbQ}$, the expected dimension of $\log j(W) \cap \Span_{\bbQ_{p}}  \log A(\OKS_{0})$ is zero, so we `expect' that this intersection is finite. 

\begin{remark}
	Since we are now working over subrings of $\bbZ_{p}$, we can simplify notation somewhat by working directly in the Jacobian rather than in its Lie algebra. Let $\overline{A(\OKS_{0})}$ denote the saturation of the closure of $A(\OKS_{0})$ in $A(\bbZ_{p})$ in the $p$-adic topology. Equivalently, $\overline{A(\OKS_{0})}$ is the preimage of $\Span_{\bbQ_{p}}  \log A(\OKS_{0})$ in $A(\bbZ_{p})$. Then, $\log j(W) \cap \Span_{\bbQ_{p}}  \log A(\OKS_{0})$ is finite if and only if $j(W)\cap \overline{A(\OKS_{0})}$ is finite. For compactness of notation, we switch to this perspective going forward.
\end{remark}

We can use this strategy to compute $X(R)$ by taking $W = \Res_{\OKS/R_{0}} X$ and $A = \Res_{\OKS/R_{0}} J$. We have
\begin{equation}\label{eqn:Chabauty-diagram3}
\begin{tikzcd} 
X(\OKS) \arrow[equal]{r} \arrow{d}{j} &  (\Res_{\OKS/R_{0}} X)(R_0) \arrow[hook]{r} \arrow{d}{j} & (\Res_{\OKS/R_{0}} X)(\bbZ_{p}) \arrow{d}{j}  \\
J(\OKS) \arrow[equal]{r} & (\Res_{\OKS/R_{0}} J) (R_0) \arrow[hook]{r}  & (\Res_{\OKS/R_{0}} J)(\bbZ_{p})  \,.
\end{tikzcd}
\end{equation}
In this case, we `expect' the intersection 
\begin{align}\label{eqn:RoS-Chabauty-set}
X(\cO_{K} \otimes \bbZ_{p})_{S,1} \colonequals j(\Res_{\OKS/R_{0}} X)(\bbZ_{p}) \cap \overline{(\Res_{\OKS/R_{0}} J)(R_0)} 
\end{align}
will be finite when the RoS Chabauty inequality
\begin{align} \label{eqn:RoS-Chabauty-inequality}
\rank J(\OKS) \leq d(\dim J - 1)
\end{align}
holds. We call this strategy \emph{Chabauty's method for restrictions of scalars} or \emph{RoS Chabauty} for short. Since $X(R) \subset X(\cO_{K} \otimes \bbZ_{p})_{S,1}$, one can hope to compute $X(R)$ using RoS Chabuaty and a Mordell-Weil sieve if $X(\cO_{K} \otimes \bbZ_{p})_{S,1}$ is finite. 

\cite{dogra-19} shows that $X(R)$ is not Zariski-dense in $(\Res_{\OKS/R_{0}} X)(\bbZ_{p})$ when \eqref{eqn:RoS-Chabauty-inequality} holds. Unfortunately, this is no longer enough to conclude that $X(\cO_{K} \otimes \bbZ_{p})_{S,1}$ is finite when $\dim W = d > 1$. 

In fact, $X(\cO_{K} \otimes \bbZ_{p})_{S,1}$ can be infinite even when \eqref{eqn:RoS-Chabauty-inequality} is satisfied! Let $T \subset A$ be a subgroup scheme, $x \in A(\OKS_{0})$ be an integral point, and let $B = x \cdot T$ be the translate of $T$ by $x$. We have
\begin{align} \label{eqn:bad-subset}
(B \cap j(W))(\bbZ_p) \cap \overline{T(R_{0})}  \subseteq j(W)(\bbZ_p) \cap \overline{A(\OKS_{0})} \,.
\end{align}
If also
\begin{align} \label{eqn:bad-inequality}
\dim \overline{T(\OKS_{0})} \geq \dim B - \dim(j(W) \cap B)\,
\end{align}
then the left-hand side of \eqref{eqn:bad-subset} will be infinite, so the right-hand side will as well. In particular, the set $X(\cO_{K} \otimes \bbZ_{p})_{S,1}$ of points cut out by RoS Chabuaty for $X$ will be infinite.

The point is that even when $\rank J(R)$ is small, RoS Chabauty may not produce a finite set of $p$-adic points if the rank is concentrated in a subgroup scheme of $\Res_{\OKS/\OKS_{0}} J$ which intersects $\Res_{\OKS/\OKS_{0}} X$ with larger than expected dimension. 

Proposition~\ref{prop:base-change-obstruction}, due to \cite{wetherell-00} and first appearing in print in \cite{siksek-13} explains one source of translates $B$ of subgroup schemes of $\Res_{\OKS/\OKS_{0}} J$ with $\dim(B \cap j(\Res_{\OKS/\OKS_{0}} X))$ larger than expected which can cause $X(\cO_{K} \otimes \bbZ_{p})_{S,1}$ to be infinite.

Recall that by convention, $K'$ is a subfield of $K$ and $R' = K' \cap R$ is the set of $S'$ integers in $K'$ for $S'$ the set of primes lying under $S$.

\begin{proposition}\label{prop:base-change-obstruction}
	Let $C/K'$ be a curve equipped with a reduced effective divisor $\fkm$. Suppose that $X/\OKSp$ is a regular model of $(C,\fkm)$ and that $X_{\OKS}$ is also a regular model of $(C_{K}, \fkm_K)$. If $X(\cO_{K'} \otimes \bbZ_{p})_{S',1}$ is infinite, then RoS $X(\cO_{K} \otimes \bbZ_{p})_{S,1}$ is also infinite.
\end{proposition}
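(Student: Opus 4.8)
The plan is to compare the RoS Chabauty data for $X$ over $\OKSp$ with that for $X_{\OKS}$ over $\OKS$ through the transitivity $\Res_{\OKS/R_{0}} = \Res_{\OKSp/R_{0}} \circ \Res_{\OKS/\OKSp}$. For any separated $\OKSp$-scheme $Y$, the unit of the adjunction between base change $(-)_{\OKS}$ and $\Res_{\OKS/\OKSp}$ is a canonical $\OKSp$-morphism $Y \to \Res_{\OKS/\OKSp}(Y_{\OKS})$; applying $\Res_{\OKSp/R_{0}}$ gives a canonical $R_{0}$-morphism $\nu_{Y} \colon \Res_{\OKSp/R_{0}} Y \to \Res_{\OKS/R_{0}}(Y_{\OKS})$. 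First I would apply this with $Y = X$ and with $Y = J_{X}$. Since $J_{X}$ is smooth and of finite type over $\OKSp$, its base change $(J_{X})_{\OKS}$ is smooth and of finite type over $\OKS$, connected with connected fibers, and has generic fiber $J_{C_{K},\fkm_{K}}$; using that $X_{\OKS}$ is a regular model of $(C_{K},\fkm_{K})$, the N\'eron mapping property for the lft-N\'eron model $\cJ_{X_{\OKS}}$ therefore produces a canonical $\OKS$-homomorphism $(J_{X})_{\OKS} \to J_{X_{\OKS}}$ (it factors through the identity component because the source is connected and meets the identity section). Composing $\nu_{J_{X}}$ with this map and using that the Abel-Jacobi maps of $X$ and $X_{\OKS}$ restrict to the same rational map on the common generic fiber $C_{K}$ (and that the targets are separated, so a morphism is determined by its generic fiber), I obtain a commutative square whose vertical maps are the Abel-Jacobi morphisms $j$ and whose horizontal maps realize $\Res_{\OKSp/R_{0}}X \to \Res_{\OKS/R_{0}} X_{\OKS}$ over $\Res_{\OKSp/R_{0}}J_{X} \to \Res_{\OKS/R_{0}}J_{X_{\OKS}}$.

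Next I would evaluate this square on points. After the harmless reduction to $p \notin S_{0}$ (true since $\fkp$ is a prime of good reduction), one has $(\Res_{\OKSp/R_{0}}X)(\bbZ_{p}) = X(\OKSp\otimes\bbZ_{p})$ and $(\Res_{\OKS/R_{0}}X_{\OKS})(\bbZ_{p}) = X(\OKS\otimes\bbZ_{p})$, and similarly with $\bbZ_{p}$ replaced by $R_{0}$ (yielding $X(\OKSp)$ and $X(\OKS)$), and likewise for the generalized Jacobians. Because $\OKS$ is finite faithfully flat over $\OKSp$, the ring maps $\OKSp \to \OKS$ and $\OKSp\otimes\bbZ_{p} \to \OKS\otimes\bbZ_{p}$ are faithfully flat, so separatedness of $X$ forces the induced maps $X(\OKSp) \to X(\OKS)$ and $X(\OKSp\otimes\bbZ_{p}) \to X(\OKS\otimes\bbZ_{p})$ to be injective. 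Write $\iota \colon X(\OKSp\otimes\bbZ_{p}) \hookrightarrow X(\OKS\otimes\bbZ_{p})$ for the second one; the square on $\bbZ_{p}$-points says $\iota$ intertwines the two Abel-Jacobi maps with a continuous group homomorphism $\psi \colon J_{X}(\OKSp\otimes\bbZ_{p}) \to J_{X_{\OKS}}(\OKS\otimes\bbZ_{p})$ carrying $J_{X}(\OKSp)$ into $J_{X_{\OKS}}(\OKS)$.

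Then I would chase saturated closures. Let $Q \in X(\cO_{K'}\otimes\bbZ_{p})_{S',1}$; by definition $Q \in X(\OKSp\otimes\bbZ_{p})$ and $j(Q) \in \overline{J_{X}(\OKSp)}$, the saturation of the $p$-adic closure of $J_{X}(\OKSp)$. Since $\psi$ is a continuous homomorphism, its differential is $\bbQ_{p}$-linear and intertwines the logarithms, so $\psi$ maps $\overline{J_{X}(\OKSp)}$ into $\overline{\psi(J_{X}(\OKSp))}$, which by monotonicity of the saturated-closure operation lies in $\overline{J_{X_{\OKS}}(\OKS)}$. Hence $j(\iota(Q)) = \psi(j(Q)) \in \overline{J_{X_{\OKS}}(\OKS)}$, i.e.\ $\iota(Q) \in X(\cO_{K}\otimes\bbZ_{p})_{S,1}$. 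As $\iota$ is injective, infinitude of $X(\cO_{K'}\otimes\bbZ_{p})_{S',1}$ yields infinitude of $X(\cO_{K}\otimes\bbZ_{p})_{S,1}$.

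The main obstacle is the careful treatment of the generalized Jacobian under the (possibly ramified) extension $\OKS/\OKSp$: constructing the canonical map $(J_{X})_{\OKS} \to J_{X_{\OKS}}$ and verifying that it respects the group law and is compatible with the Abel-Jacobi maps at the level of integral models, not merely on generic fibers. Everything else — the transitivity and functoriality of $\Res$, the identifications of $\bbZ_{p}$- and $R_{0}$-points, the injectivity coming from faithful flatness and separatedness, and the behavior of the saturated $p$-adic closure under continuous homomorphisms — is formal. One should also fix at the outset that the divisor defining the Abel-Jacobi map of $X_{\OKS}$ is the base change of the one used for $X$ (or, when $X(\OKSp) \neq \emptyset$, the base change of an integral point), so that the two Abel-Jacobi maps genuinely agree on $C_{K}$.
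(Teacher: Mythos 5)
Your proposal is correct and follows essentially the same route as the paper: the unit-of-adjunction (diagonal) embeddings $\Res_{\OKSp/R_{0}}X \hookrightarrow \Res_{\OKS/R_{0}}X_{\OKS}$ and $\Res_{\OKSp/R_{0}}J_{X} \to \Res_{\OKS/R_{0}}J_{X_{\OKS}}$ intertwine the Abel--Jacobi maps, carry integral points to integral points, hence map the set cut out by RoS Chabauty over $\OKSp$ injectively into the one over $\OKS$. The paper states this in three lines; your additional care with the N\'eron-mapping-property comparison $(J_{X})_{\OKS}\to J_{X_{\OKS}}$, the injectivity on points, and the behavior of the saturated $p$-adic closure under the continuous homomorphism simply makes explicit what the paper leaves implicit.
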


\begin{proof}
	The maps
	\begin{align*}
	j: & \Res_{R'/R_{0}} X \to \Res_{R'/R_{0}} J \\
	j: & \Res_{R/R_{0}} X_{R} \to \Res_{R/\bbQ} J_{R} 
	\end{align*}
	induced by the Abel-Jacobi map $j: X \to J$ commute with the diagonal embeddings 
	\begin{align*}
	\iota: &  \Res_{R'/R_{0}} X \hookrightarrow \Res_{R/R_{0}} X_{R} \\
	\iota: &\Res_{R'/R_{0}} J \hookrightarrow \Res_{R/R_{0}} J_{R} 
	\end{align*}
	In particular, the embeddings $\iota$ induce
	\begin{align*}
	  j(\Res_{\OKS/\OKS_{0}} X)(\bbZ_{p}) \cap \overline{(\Res_{\OKS/\OKS_{0}} J) (R_0)}\subseteq  j(\Res_{\OKS/\OKS_{0}} X_{\OKS})(\bbZ_{p}) \cap \overline{(\Res_{\OKS/\OKS_{0}} J_{\OKS}) (R_0)}\,.
	\end{align*}
	The left-hand side is infinite by assumption, so the right-hand side is infinite as well.
\end{proof}

\begin{remark}
	It is easy to find examples of curves satisfying the conditions of Proposition~\ref{prop:base-change-obstruction} by brute-force search, first for smooth proper curves $X/\bbQ$  with $\rank J(\bbQ) < \text{genus}(X)$ and then for number fields $K$ such that $\rank J(K)$ is not much larger than $\rank J(\bbQ)$. See \cite{siksek-13} for an explicit example.
	
	However, Proposition~\ref{prop:base-change-obstruction} does not explain all cases where $X(\cO_{K} \otimes \bbZ_{p})_{S,1}$ is infinite.
\end{remark}

\begin{proposition}\label{prop:full-prym-obstruction}
	Let $C, D/K$ be smooth proper curves, let $\fkm_{C}, \fkm_{D}$ be reduced effective divisors on $C$ and $D$, and let $X,Y/\OKS$ be regular models for $(C,\fkm_{C})$ and $(D,\fkm_{D})$, respectively. 
	
	Let $f: X \to Y$ be a morphism and let $f_{*}: \Res_{\OKS/R_{0}} J_{X} \to \Res_{\OKS/R_{0}} J_{Y}$ be the induced pushforward map between the restrictions of scalars of the Jacobians of $X$ and $Y$. Let $P = f_{*}^{-1}(O)$ be the preimage of the origin on $J_{Y}$. ($P$ is a generalized Prym variety associated to $f$.)
	
	Suppose that $\overline{P(R_{0})} = P(\bbQ_{p})$. Suppose also that there is some $x \in (\Res_{\OKS/R_{0}} X)(\bbQ_{p})$ such that the image $f_{*}(j(x))$ lies in $Y(\cO_{K} \otimes \bbZ_{p})_{S,1} =  j(\Res_{\OKS/R_{0}} Y_{\OKS})(\bbZ_{p}) \cap \overline{(\Res_{\OKS/R_{0}} J_{Y}) (R_0)} $ and is not isolated in $Y(\cO_{K} \otimes \bbZ_{p})_{S,1}$ in the $p$-adic topology. (In particular, $Y(\cO_{K} \otimes \bbZ_{p})_{S,1}$ is infinite.) Then, $X(\cO_{K} \otimes \bbZ_{p})_{S,1}$ is infinite.
\end{proposition}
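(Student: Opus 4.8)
The plan is to use the density hypothesis $\overline{P(R_{0})} = P(\bbQ_{p})$ to show that the saturated $p$-adic closure $\overline{(\Res_{\OKS/R_{0}} J_{X})(R_{0})}$ is a union of fibres of $f_{*}$, and then to transport the positive-dimensional piece of $Y(\cO_{K} \otimes \bbZ_{p})_{S,1}$ through $y_{0} := f_{*}(j_{X}(x))$ back up to $X$ along $f$. I would first dispose of the case that $f$ is constant: then $f_{*} = 0$, $P = \Res_{\OKS/R_{0}} J_{X}$, the hypothesis becomes $\overline{(\Res_{\OKS/R_{0}} J_{X})(R_{0})} = (\Res_{\OKS/R_{0}} J_{X})(\bbZ_{p})$, and hence $X(\cO_{K} \otimes \bbZ_{p})_{S,1} = j_{X}(\Res_{\OKS/R_{0}} X)(\bbZ_{p})$ is infinite. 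So assume $f$ non-constant; then $f$ is finite, $f_{*} \colon \Res_{\OKS/R_{0}} J_{X} \to \Res_{\OKS/R_{0}} J_{Y}$ is surjective with kernel $P$, and after translating the Abel--Jacobi maps we may assume $f_{*} \circ j_{X} = j_{Y} \circ (\Res_{\OKS/R_{0}} f)$. Then $w_{0} := (\Res_{\OKS/R_{0}} f)(x)$ satisfies $j_{Y}(w_{0}) = y_{0} \in j_{Y}(\Res_{\OKS/R_{0}} Y)(\bbZ_{p})$, so $w_{0} \in (\Res_{\OKS/R_{0}} Y)(\bbZ_{p})$ since $j_{Y}$ is an immersion, and the valuative criterion for the finite morphism $\Res_{\OKS/R_{0}} f$ lets me regard $x$ as a $\bbZ_{p}$-point of $\Res_{\OKS/R_{0}} X$.

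Writing $A_{X} = \Res_{\OKS/R_{0}} J_{X}$ and $A_{Y} = \Res_{\OKS/R_{0}} J_{Y}$, the group-theoretic heart is as follows. From $P(R_{0}) \subseteq A_{X}(R_{0})$ we get $P(\bbQ_{p}) = \overline{P(R_{0})} \subseteq \overline{A_{X}(R_{0})}$, and since $P(\bbQ_{p}) = \ker\!\bigl(f_{*}\colon A_{X}(\bbZ_{p}) \to A_{Y}(\bbZ_{p})\bigr)$ the subgroup $\overline{A_{X}(R_{0})}$ is a union of fibres of $f_{*}$, i.e.\ $\overline{A_{X}(R_{0})} = f_{*}^{-1}\bigl(f_{*}\overline{A_{X}(R_{0})}\bigr)$. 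I would then prove $f_{*}\overline{A_{X}(R_{0})} = \overline{A_{Y}(R_{0})} \cap f_{*}(A_{X}(\bbZ_{p}))$: the inclusion ``$\subseteq$'' is immediate and the left side is closed because $f_{*}$ is a continuous homomorphism from a compact group, while for ``$\supseteq$'' one uses that the exact sequence $1 \to P \to A_{X} \to A_{Y} \to 1$ of group schemes over $R_{0}$ gives an injection $\operatorname{coker}\bigl(f_{*}\colon J_{X}(\OKS) \to J_{Y}(\OKS)\bigr) \hookrightarrow H^{1}(R_{0},P)$, which is finite (as $P$ is a torus in the genus $0$ situation), so $f_{*}(J_{X}(\OKS))$ has finite index in $J_{Y}(\OKS)$ and a short computation with saturated closures, using $\ker f_{*} \subseteq \overline{A_{X}(R_{0})}$, gives equality. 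Feeding in the hypothesis $y_{0} \in \overline{A_{Y}(R_{0})}$ (and $y_{0} \in f_{*}(A_{X}(\bbZ_{p}))$ trivially) yields $y_{0} \in f_{*}\overline{A_{X}(R_{0})}$, so $j_{X}(x) \in f_{*}^{-1}(y_{0}) \subseteq \overline{A_{X}(R_{0})}$; in particular $j_{X}(x) \in X(\cO_{K} \otimes \bbZ_{p})_{S,1}$ and this set is nonempty.

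To make it \emph{infinite}, I would use that $y_{0}$ is not isolated in the $p$-adic analytic set $Y(\cO_{K} \otimes \bbZ_{p})_{S,1} = j_{Y}(\Res_{\OKS/R_{0}} Y)(\bbZ_{p}) \cap \overline{A_{Y}(R_{0})}$, so there is a non-constant $p$-adic analytic germ $\gamma$ through $y_{0}$ inside it. As $j_{Y}$ is (the generic fibre of) a closed immersion, $\gamma$ lifts to a germ $\tilde\gamma$ through $w_{0}$ in $(\Res_{\OKS/R_{0}} Y)(\bbZ_{p})$, and --- provided $\Res_{\OKS/R_{0}} f$ is unramified at $x$ --- Hensel's lemma lifts $\tilde\gamma$ to a non-constant germ $\hat\gamma$ through $x$ in $(\Res_{\OKS/R_{0}} X)(\bbZ_{p})$. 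Then $f_{*}(j_{X}(\hat\gamma(t))) = j_{Y}(\tilde\gamma(t)) = \gamma(t) \in \overline{A_{Y}(R_{0})} \cap f_{*}(A_{X}(\bbZ_{p})) = f_{*}\overline{A_{X}(R_{0})}$, so $j_{X}(\hat\gamma(t)) \in \overline{A_{X}(R_{0})}$ for all $t$, and the infinite set $\{j_{X}(\hat\gamma(t))\}_{t}$ lies in $j_{X}(\Res_{\OKS/R_{0}} X)(\bbZ_{p}) \cap \overline{A_{X}(R_{0})} = X(\cO_{K} \otimes \bbZ_{p})_{S,1}$. (Equivalently, one avoids germs by taking distinct $y_{n} \to y_{0}$ in $Y(\cO_{K} \otimes \bbZ_{p})_{S,1}$ and lifting each $j_{Y}^{-1}(y_{n})$ near $x$ through $\Res_{\OKS/R_{0}} f$ by Hensel; this is exactly the mechanism \eqref{eqn:bad-subset}--\eqref{eqn:bad-inequality} applied with $T$ the coset of $P$ through $j_{X}(x)$.)

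The step I expect to be the main obstacle is this lift along $\Res_{\OKS/R_{0}} f$: if $f$ ramifies at the point of $x$ above some place dividing $p$, then $\Res_{\OKS/R_{0}} f$ need not be \'etale at $x$ and the fibres over points of $\Res_{\OKS/R_{0}} Y$ near $w_{0}$ need not contain $\bbZ_{p}$-points. I would handle this either by first deforming $\tilde\gamma$ into the dense open unramified locus of $\Res_{\OKS/R_{0}} f$ before lifting --- the reduction $\bar x$ already exhibits a point of the fibre over the reduction of $\tilde\gamma(t)$, so Hensel applies there --- or by noting that the maps $f$ that actually arise as obstructions (those coming from descent along $[n]$, and the $q$-power maps behind Theorems~\ref{thm:main-theorem} and~\ref{thm:main-theorem-2}) are \'etale away from the removed divisors. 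The finiteness of $H^{1}(R_{0},P)$ invoked above is routine for tori over rings of $S$-integers after possibly enlarging $S_{0}$, which handles the genus $0$ case; in the general semiabelian case one invokes finiteness of the corresponding $H^{1}$.
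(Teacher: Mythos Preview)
Your approach is correct and the overall architecture matches the paper's, but you work harder than necessary at the group-theoretic step. The paper does not prove $f_{*}\overline{A_{X}(R_{0})} = \overline{A_{Y}(R_{0})} \cap f_{*}(A_{X}(\bbZ_{p}))$ via an $H^{1}$ computation; instead it uses the pullback $f^{*}\colon A_{Y} \to A_{X}$ to show the stronger inclusion $f_{*}^{-1}(\overline{A_{Y}(R_{0})}) \subseteq \overline{A_{X}(R_{0})}$ directly. Concretely, for $y \in \overline{A_{Y}(R_{0})}$ one has $f^{*}(y) \in \overline{A_{X}(R_{0})}$ since $f^{*}$ is a homomorphism defined over $R_{0}$; any $z \in f_{*}^{-1}(y)$ then satisfies $[\deg f]\, z - f^{*}(y) \in P(\bbZ_{p}) \subseteq \overline{A_{X}(R_{0})}$, and saturation gives $z \in \overline{A_{X}(R_{0})}$. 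This bypasses your cokernel/$H^{1}$ argument entirely, and works uniformly for semiabelian $J$ without invoking finiteness of $H^{1}(R_{0},P)$. Your route does reach the same conclusion (and your ``short computation with saturated closures, using $\ker f_{*} \subseteq \overline{A_{X}(R_{0})}$'' is valid), but the $f^{*}$ trick is the cleaner move here.

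For the lifting step, you and the paper do essentially the same thing: the paper invokes Krasner's lemma to lift points $y$ close to $f_{*}(j(x))$ to $x' \in (\Res_{\OKS/R_{0}} X)(\bbZ_{p})$ with $j(f(x')) = y$, which is your Hensel lift along $\Res_{\OKS/R_{0}} f$ phrased differently. Your concern about ramification of $f$ at $x$ is legitimate --- the paper glosses over it --- but as you note, the covers actually used downstream are \'etale on the relevant locus, so this is not an obstacle in the applications.
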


\begin{proof}
	First, we claim that 
	\begin{align} \label{eqn:full-fibers}
	f_{*}^{-1}(\overline{J_{Y}(R_{0})}) \subset \overline{J_{X}(R_{0})}\,.
	\end{align} Indeed, for any $y \in \overline{J_{Y}(R_{0})}$, we have $(f_{*}^{-1}(y))(\bbZ_{p}) = f^{*}(y)\cdot P(\bbZ_{p})$. Claim \eqref{eqn:full-fibers} follows since $f^{*}(y) \in \overline{J_{X}(R_{0})}$ and $P(\bbZ_{p}) = \overline{P(R_{0})} \subset \overline{J_{X}(R_{0})}$.
	
	Now, \eqref{eqn:full-fibers} implies that if $x' \in (\Res_{\OKS/R_{0}} X)(\bbZ_{p})$ and $j(f(x')) \in I_{Y}$, then 
	\[
	j(x') \in X(\cO_{K} \otimes \bbZ_{p})_{S,1} = j(\Res_{\OKS/R_{0}} X_{\OKS})(\bbZ_{p}) \cap \overline{(\Res_{\OKS/R_{0}} J_{X}) (R_0)}\,.
	\] 
	To conclude, we observe by Krasner's Lemma that if $y \in Y(\cO_{K} \otimes \bbZ_{p})_{S,1}$ is sufficiently close $p$-adically to $j(f(x)) = f_{*}(j(x))$, then there is some $x' \in (\Res_{\OKS/R_{0}} X)(\bbZ_{p})$ such that $y = j(f(x'))$. In particular, $j(x') \in X(\cO_{K} \otimes \bbZ_{p})_{S,1}$, so $X(\cO_{K} \otimes \bbZ_{p})_{S,1}$ is infinite.
\end{proof}

\begin{example} \label{ex:dogra} 
	In \cite{dogra-19}, Dogra gives an example where both \eqref{eqn:RoS-Chabauty-inequality} and the conditions of Proposition~\ref{prop:full-prym-obstruction} are satisfied (at least up to finite index.) In Dogra's example, $K$ is a quadratic field,  the curve $Y/\bbQ$ has genus $2$ with $\rank J_{Y}(\bbQ) = \rank J_{Y}(K) = 2$, the curve $X/K$ has genus $3$ with $\rank J_{X}(K) = 4$, and $f: X \to Y_{K}$ is an unramified morphism of curves.
	
	In Dogra's example, $\overline{J_{Y}(\bbQ)} = J_{Y}(\bbQ_{p})$, so $Y(\bbQ \otimes \bbQ_{p})_{1}$ is infinite. By Proposition~\ref{prop:base-change-obstruction}, the set $Y(K \otimes \bbQ_{p})_{1}$ is also infinite.
	
	Let $P = f_{*}^{-1}(O)$ as in Proposition~\ref{prop:full-prym-obstruction}. Then,
	\[
	\rank P(K) = \rank J_{X}(K) - \rank J_{Y}(K) = 4 - 2 = 2 = \dim P\,.
	\]
	With a bit more work, one can check that in Dogra's example, $\overline{P(R_{0})} = P(\bbQ_{p})$ and there is some $x \in (\Res_{\OKS/R_{0}} X)(\bbQ_{p})$ such that the image $f_{*}(j(x))$ in $Y(K \otimes \bbQ_{p})_{S,1}$ is not isolated. By Proposition~\ref{prop:full-prym-obstruction}, the set $X(K \otimes \bbQ_{p})_{1}$ of points cut out by RoS Chabauty applied to $X$ is infinite as well.
	
	On the other hand, 
	\begin{align}
	\rank J_{X}(K) = 4  \leq 4 = 2(3 - 1) = [K:\bbQ] (\dim J - 1)\,,
	\end{align}
	so the RoS Chabauty inequality \eqref{eqn:RoS-Chabauty-inequality} is satisfied. Moreover, Dogra's $X$ is not the base change of any curve defined over $\bbQ$, so the infinitude of $X(K \otimes \bbQ_{p})_{1}$ cannot be explained by Proposition~\ref{prop:base-change-obstruction} alone.
\end{example}

\begin{remark}\label{rem:full-prym-obstruction}
	In the setup of Proposition~\ref{prop:full-prym-obstruction} and its proof, it is possible for the geometric structure to cause $X(\cO_{K} \otimes \bbZ_{p})_{S,1}$ to be infinite even if $\overline{P(\OKS_{0})}$ is not finite index in $P(\bbZ_{p})$. In fact, an expected dimension calculation suggests that in `typical' cases
	\[
	\dim X(\cO_{K} \otimes \bbZ_{p})_{S,1} \geq Y(\cO_{K} \otimes \bbZ_{p})_{S,1} - (\dim P - \dim \overline{P(R_{0})})\,
	\]
	so long as the preimage of some component of $Y(\cO_{K}\otimes \bbZ_{p})_{S,1}$ (as a na\"ive $p$-adic analytic variety in the sense of Serre) of maximal dimension has nonempty preimage under $f_{*}$ in $(\Res_{\OKS/\OKS_{0}} X)(\bbZ_{p})$.	
\end{remark}

\section{Main Definitions and Statement of Main Results}\label{sec:main-defs}

For simplicity of notation, we now restrict to the case that $X/R$ is a regular model of $(C,\fkm)$ where $C/K$ has genus $0$. 

In Propositions~\ref{prop:base-change-obstruction}~and~\ref{prop:full-prym-obstruction}, the infinitude of $X(\cO_{K} \otimes \bbZ_{p})_{S,1}$ is explained by the existence of a subgroup scheme $T \subset \Res_{R/R_{0}} J$ such that $\rank T(\OKS_{0})$ is large and and $T$ intersects $\Res_{R/R_{0}} X$ in higher-than-expected dimension. We make the following definition to capture this obstruction. 
  
\begin{definition}\label{def:subgroup-obstruction}
	Let $C/K$ be a genus $0$ curve, let $\fkm$ be a reduced effective divisor on $C$ and let $X$ be a regular model of $(C,\fkm)$. A \emph{subgroup obstruction} to RoS Chabauty for $X$ is a translate $x \cdot T$ of a subtorus $T \subset \Res_{R/R_{0}} J_{X}$ such that
  	\[
  	\dim( (x\cdot T) \cap j( \Res_{R/R_{0}} X ) ) > \dim T - \dim \overline{T(R_0)}\,.
  	\]
\end{definition}

All constructions of subgroup obstructions that have appeared in the literature arise from geometric constructions involving base change and direct sum with Prym varieties, as in  Propositions~\ref{prop:base-change-obstruction}~and~\ref{prop:full-prym-obstruction}. Such subgroups are especially likely to be subgroup obstructions, because they intersect $j( \Res_{R/R_{0}} X ) $ in higher-than-expected dimension for reasons explained by the geometry of the \emph{curve} $X$. Motivated by these propositions, we now define the class of BCP-subtori of $\Res_{R/R_{0}} J$ for $J$ the generalized Jacobian of regular model $X$ of $(C,\fkm)$ for some genus $0$ curve $C/K$.  As Definitions~\ref{def:bc-successor},~\ref{def:p-successor},~and~\ref{def:bcp-torus} make precise, BCP subtori are the subtori which  which can be built inductively by iteratively (1) taking diagonal embeddings from curves defined over subrings which become isomorphic to $X$ after base change to $R$ (Proposition~\ref{prop:base-change-obstruction}) and (2) adding Prym varieties (Proposition~\ref{prop:full-prym-obstruction} and Remark~\ref{rem:full-prym-obstruction}).
By replacing each instance of `subtorus' with `subgroup scheme,' these definitions generalize in a natural way to restrictions of scalars of generalized Jacobians of higher genus curves. Recall that by convention, $K'$ is a subfield of $K$. 

\begin{definition}\label{def:bc-successor}
	Let $D/K'$ be a genus $0$ curve, let $\fkm$ be a reduced effective divisor on $D$ and let $Y$ be a regular model of $(D,\fkm)$. Let $T' \subset \Res_{R'/R_{0}} J_{Y}$ be a subtorus of the restriction of scalars of the generalized Jacobian of $Y$. Suppose that $X \cong Y_{R}$ is the base change of $Y$ to $R$. The isomorphism induces a natural inclusion 
	\[
	\Res_{R'/R_{0}} J_{Y} \hookrightarrow \Res_{R/R_{0}} J_{X}
	\]
	of restrictions of scalars of generalized Jacobians. Let $T \subset \Res_{R/R_{0}} J_{X}$ be the image of $T'$ under this inclusion. We call the pair $(X,T)$ a \emph{BC-successor} of the pair $(Y,T')$.
\end{definition}

\begin{definition}\label{def:p-successor}
	Let $C,D/K$ be genus $0$ curves, let $\fkm_{C}$ and $\fkm_{D}$ be reduced effective divisors on $C$ and $D$ respectively, and let $X$ and $Y$ be regular models of $(C,\fkm_{C})$ and $(D, \fkm_{D})$, respectively. 
	
	Let $f: X \to Y$ be a morphism and let $T' \subset \Res_{R/R_{0}} J_{Y}$ be a subtorus of the restriction of scalars of the generalized Jacobian of $Y$. The map $f$ induces a push-forward map 
	\[
	f_{*} : \Res_{R/R_{0}} J_{X} \to \Res_{R/R_{0}} J_{Y}\,.
	\]
	The connected component\, $T = f_{*}^{-1}(T')^{\circ}$ of the identity of the preimage of $T'$ under $f_{*}$ is a subtorus of\, $\Res_{R/R_{0}} J_{X}$. We call the pair $(X,T)$ a \emph{P-successor} of $(Y,T')$.
\end{definition}

\begin{definition}\label{def:bcp-torus}
	Let $C/K$ be a genus $0$ curve, let $\fkm$ be a reduced effective divisor on $C$ and let $X$ be a regular model of $(C,\fkm)$.
	
	We say that a torus $T \subset \Res_{R/R_{0}} J_{X}$ is a \emph{$0$-BCP torus for $X$} if\, $T = \Res_{R/R_{0}} J_{X}$. 
	
	For $n \geq 1$, we say $T \subset \Res_{R/R_{0}} J_{X}$ is an \emph{$n$-BCP torus for $X$} if the pair $(X,T)$ is either a BC-successor or a P-successor of some pair $(Y,T')$ where $T'$ is an $(n-1)$-BCP torus for $Y$.
	
	We say $T \subset \Res_{R/R_{0}} J_{X}$ is a \emph{BCP torus for $X$} if $T$ is an $n$-BCP torus for $X$ for some $n$.
\end{definition}

\begin{remark}
	Arguing by induction, one sees that if $T \subset \Res_{R/R_{0}} J_{X}$ is a BCP torus, then the intersection 
	\[
	T \cap j( \Res_{R/R_{0}} X ) \subset \Res_{R/R_{0}} J_{X}\,.
	\]
	has positive dimension. More precisely, if $(X,T)/R$ is a $0$-BCP torus,
	\[
	\dim T \cap j( \Res_{R/R_{0}} X ) = d\,.
	\] 
	If $(X,T)/R$ is the BC-/P-successor of the pair $(Y,T')/R'$, then
	\[
	\dim T \cap j( \Res_{R/R_{0}} X ) = \dim T' \cap j( \Res_{R'/R_{0}} Y )\,.
	\]
\end{remark}

\begin{example}\label{ex:bcp-torus}
	Let $\bbQ \subset K' \subset K$ be number fields with rings of $S$-integers $R_0, R'$, and $R$ respectively. Suppose $K$ has no other subfields. Table~\ref{table:BCP-tori} illustrates the contribution of `iterating' the BCP-construction in the definition in the example of the punctured genus $0$ curve $\mathbb P^1_{R} \smallsetminus \{0,1,2,\infty\}$. We briefly explain some of the entries. 
	
	The only $0$-BCP torus is the full restriction of scalars of the generalized Jacobian, which gives three factors of $\Res_{R/R_{0}}\mathbb G_{m,R}$.
	
	Any $1$-BCP torus arising as a $P$-successor of an $m$-times punctured $\bbP^1_{R}$ would be isogenous to the product of $m-1$ copies of $\Res_{R/R_{0}}\mathbb G_{m,R}$ (the $0$-BCP torus of the image curve) and $4-m$ copies of $\Res_{R/R_{0}}\mathbb G_{m,R}$ coming from the Prym variety. So, (up to isogeny) any $0$-BCP torus arising as a $P$-successor is already a $0$-BCP torus. The $1$-BCP tori arising as BC-successors consist of three factors of $\Res_{R'/R_{0}}\mathbb G_{m,R'}$ or three factors of $\mathbb G_{m,R_0}$. Similarly, for any $\mathbb P^1_{R}$ where the punctures descend to degree $1$ punctures over $R_{0}$, the $1$-BCP tori would be sums of copies of a single irreducible torus.
	
	A BC-successor of a BC-successor can be viewed as a BC-successor in a single step, so to understand the $2$-BCP tori for $\mathbb P^1_{R} \smallsetminus \{0,1,2,\infty\}$, it suffices to understand the $P$-successors of $1$-BCP tori for covered curves. In our case, the covering maps forget or combine individual punctures. As in the previous paragraph, a $P$-successor of an $m$-times punctured $\bbP^1_{R}$ would be isogenous to the product of a $1$-BCP torus and $4-m$ copies of $\Res_{R/R_{0}}\mathbb G_{m,R}$ coming from the Prym variety. So, those $2$-BCP tori which are not also $1$-BCP tori are those products of three total copies of either $\Res_{R/R_{0}}\mathbb G_{m,R}$ and  $\Res_{R'/R_{0}}\mathbb G_{m,R'}$ or $\Res_{R/R_{0}}\mathbb G_{m,R}$ and  $\mathbb G_{m,R_0}$, where not all isogeny factors are the same.
	
	Continuing in this manner, the $3$-BCP tori are (up to isogeny) the product of three total copies of $\Res_{R'/R_{0}}\mathbb G_{m,R'}$ and  $\mathbb G_{m,R_0}$, where not all isogeny factors are the same. The $4$-BCP tori are (up to isogeny) the product $\Res_{R/R_{0}}\mathbb G_{m,R} \times \Res_{R'/R_{0}}\mathbb G_{m,R'} \times \mathbb G_{m,R_0}$. We note that in this example, it is not possible to get three distinct isogeny factors until we have used both the base change and Prym variety constructions twice, so  iterating the construction does provide new examples of BCP-tori. 
\end{example}
	
	\begin{table} 
		\begin{tabular}{|l|c|c|} \hline 
			$n$ & List of $n$-BCP tori for $\mathbb P^1_{R'} \smallsetminus \{0,1,2,\infty\}$ & Explanation \\  \hline 
			$0$ & $\Res_{R/R_{0}}\mathbb G_{m,R} \times \Res_{R/R_{0}}\mathbb G_{m,R} \times \Res_{R/R_{0}}\mathbb G_{m,R}$ & $0$-BCP \\ \hline
			$1$ & $\Res_{R'/R_{0}}\mathbb G_{m,R'} \times \Res_{R'/R_{0}}\mathbb G_{m,R'} \times \Res_{R'/R_{0}}\mathbb G_{m,R'}$ & BC-successor from $\mathbb P^1_{R'} \smallsetminus \{0,1,2,\infty\}$ \\
			    & $\mathbb G_{m,R_0} \times \mathbb G_{m,R_0} \times \mathbb G_{m,R_0}$ & BC-successor from $\mathbb P^1_{R_0} \smallsetminus \{0,1,2,\infty\}$ \\ \hline
			$2$ & $\Res_{R/R_{0}}\mathbb G_{m,R} \times \Res_{R/R_{0}}\mathbb G_{m,R} \times \Res_{R'/R_{0}}\mathbb G_{m,R'}$ & P-successor from $\mathbb P^1_{R} \smallsetminus \{0,\infty\}$ \\
			& $\Res_{R/R_{0}}\mathbb G_{m,R} \times \Res_{R'/R_{0}}\mathbb G_{m,R'} \times \Res_{R'/R_{0}}\mathbb G_{m,R'}$ & P-successor from $\mathbb P^1_{R} \smallsetminus \{0,1,\infty\}$ \\			
			& $\Res_{R/R_{0}}\mathbb G_{m,R} \times \Res_{R/R_{0}}\mathbb G_{m,R} \times \mathbb G_{m,R_0}$ & P-successor from $\mathbb P^1_{R} \smallsetminus \{0,\infty\}$ \\
			& $\Res_{R/R_{0}}\mathbb G_{m,R} \times \mathbb G_{m,R_0} \times \mathbb G_{m,R_0}$ & P-successor from $\mathbb P^1_{R'} \smallsetminus \{0,1,\infty\}$ \\ \hline
			$3$ & $\Res_{R'/R_{0}}\mathbb G_{m,R'} \times \Res_{R'/R_{0}}\mathbb G_{m,R'} \times \mathbb G_{m,R_0}$ & BC-successor from $\mathbb P^1_{R'} \smallsetminus \{0,1,2,\infty\}$ \\
			 & $\Res_{R'/R_{0}}\mathbb G_{m,R'} \times \mathbb G_{m,R_0} \times \mathbb G_{m,R_0}$ & BC-successor from $\mathbb P^1_{R'} \smallsetminus \{0,1,2,\infty\}$ \\ \hline
			$4$ 			& $\Res_{R/R_{0}}\mathbb G_{m,R} \times \Res_{R'/R_{0}}\mathbb G_{m,R'} \times \mathbb G_{m,R_0}$ & P-successor from $\mathbb P^1_{R} \smallsetminus \{0,1,\infty\}$ \\	\hline 
		\end{tabular}
	\caption{Let $\bbQ \subset K' \subset K$ be number fields with rings of $S$-integers $R_0, R'$, and $R$ respectively. Suppose $K$ has no other subfields. This table lists the BCP tori (up to isogeny) for $\mathbb P^1_{R}\smallsetminus\{0,1,2,\infty\}$. The BCP-tori are grouped by the least $n$ for which they are an $n$-BCP torus. The final column explains one way the torus can be constructed as an $n$-BCP torus from an $(n-1)$-BCP torus for another curve, either by base change or by a map that forgets a puncture, although isogenous tori may arise as BCP tori in other ways.}
	\label{table:BCP-tori}
	\end{table}	

\begin{definition}
	Let $C/K$ be a genus $0$ curve, let $\fkm$ be a reduced effective divisor on $C$ and let $X$ be a regular model of $(C,\fkm)$. A  \emph{BCP obstruction} to RoS Chabauty for $X$ is a BCP torus $T$ for $X$ such that 
	\[
	\dim( T \cap j( \Res_{R/R_{0}} X ) ) > \dim T - \dim \overline{T(R_0)}\,.
	\]
\end{definition}

In other words, a BCP obstruction is a subgroup obstruction which is also a BCP torus. Given a BCP obstruction $T$ to RoS Chabauty for $X$, an expected dimension calculation suggests that $j( \Res_{R/R_{0}} X)(\bbZ_{p})\cap \overline{T(R_{0})}$, and therefore $X(\cO_{K} \otimes \bbZ_{p})_{S,1}$, is infinite. 

\subsection{Statement of Main Results}

Our main results show that there are no BCP obstructions to RoS Chabauty for a large collection of subcovers of $\bbP^1_{R}\smallsetminus\{0,1,\infty\}$. For most of the curves we consider, we prove the significantly stronger result that there are no subgroup obstructions to RoS Chabauty. 

Since we expect subgroup obstructions which are not explained by BCP obstructions to be rare (if they exist), we believe these results provide strong evidence that for sufficiently large $q$, the sets $X(\cO_{K} \otimes \bbZ_{p})_{S,1}$ cut out by RoS Chabauty are finite for all of the subcovers $X$ that we consider.

\noBCPobstruction

\nosubgroupobstruction

\begin{remark}
	Recall that $R = \cO_{K,S}$. For any prime $q$ and $\alpha \in R^{\times}$, let $\fkm_{\alpha,q}' = \{x \in \overline{K}: x^q - \alpha = 0\} \cup \{0,\infty\}$, let $\Gamma_{\alpha,q}'$ be the closure of $\supp(\fkm_{\alpha,q}')$ in $\bbP^1_{R}$, and let $X_{\alpha,q}' = \bbP^1_{R} \smallsetminus \Gamma_{\alpha,q}'$.
	
	There is a natural inclusion map $X_{\alpha,q}' \to X_{\alpha,q}$, so to compute $X_{\alpha,q}'(R)$, it suffices to compute $X_{\alpha,q}(R)$. Moreover, elements $X_{\alpha,q}'(R)$ correspond to elements $x \in (\mathbb P^{1}_{R} \smallsetminus \{0,1,\infty\})(R)$ such that $x/\alpha$ is a $q$th power in $R^{\times}$. Since $R^{\times}$ is a finitely-generated abelian group, this means that if one can compute $X_{\alpha,q}'(R)$ as $\alpha$ ranges over a (finite) set of coset representative of $R^{\times}/(R^\times)^q$, then one can compute $(\mathbb P^{1}_{R} \smallsetminus \{0,1,\infty\})(R)$. 
	
	In particular, Theorems~\ref{thm:main-theorem}~and~\ref{thm:main-theorem-2} suggest that it should be possible to compute the set 
	\[
	(\mathbb P^{1}_{R} \smallsetminus \{0,1,\infty\})(R) = \{(x,y)\in R^\times \times R^{\times}: x + y = 1\}
	\]
	of solutions to the $S$-unit equation in $K$ using RoS Chabauty applied to the curves $X_{\alpha,q}$.
\end{remark}

\begin{remark} \label{rem:CM-field-RoS-Obstruction}
	The assumption in Theorem~\ref{thm:main-theorem} that $K$ does not contain a CM-field is necessary under the generalized Leopoldt Conjecture~\ref{conj:general-leopoldt} on $p$-adic linear independence of logarithms of algebraic numbers. In this remark, we exhibit a BCP obstruction to RoS Chabauty for $X_{1,q}$ (defined as in Theorem~\ref{thm:main-theorem})
	under generalized Leopoldt for a single totally real number field.
	
	Suppose $K$ is a CM field and that $q$ is large enough so that $\bbQ(\zeta_{q})$ and $K$ are linearly disjoint. Let $K'$ be the maximal totally real subfield of $K$  with ring of $S$-integers $R'$. Then, there is some $\beta \in R'$ such that $K = K'(\beta)$ and $\beta^2$ is negative in all real embeddings $K' \hookrightarrow \bbR$. Let $\overline{\beta}$ be the Galois conjugate of $\beta$ under the $\Gal(K/K')$ action. Then, $\overline{\beta}$ is the complex conjugate of $\beta$ for every embedding $K \hookrightarrow \bbC$. Define the fractional linear transformation:
	\begin{align*}
	f:  \bbP^1 & \to \bbP^1\,, \\
	x  & \mapsto \frac{\beta x - \overline{\beta}}{x - 1}\,.
	\end{align*}
	Set $\fkm = f(\{ x\in \overline{K}\smallsetminus\{1\} : x^q - 1 = 0\})$. The divisor $\fkm$ is stable under the action of $\Gal(K(\zeta_{q})/K')$ so we may define $Y \colonequals \bbP^1_{R'}\smallsetminus \overline{\supp(\fkm)}$. By construction, $X_{1,q} \cong Y_{R}$.
	
	Set $T = \Res_{R'/R_{0}} J_{Y}$. Assuming generalized Leopoldt, $\dim \overline{T(R_{0})} = \min( \dim T, \rank T(R_{0}))$. If this holds, we claim that the $1$-BCP torus $T$ is a BCP obstruction to RoS Chabauty for $X_{1,q}$.
	
	In every complex embedding, the map $f$ takes the unit circle to the real axis, so the field $K'(\Gamma)$ is totally real. Applying \eqref{eqn:jacobian-punctured-P1-rank} gives 
	\[
	\rank(T(R_{0})) = \rank J_{Y}(R') \geq [K':\bbQ](q-2) = \dim \Res_{R'/R_{0}} J_{Y}\,,
	\]
	with equality if and only if every prime in $S'$ splits completely in $K'(\Gamma)$.	Generalized Leopoldt implies $\dim \overline{T(R_{0})} = \dim T$. Since  
	\[
	\dim(T \cap j(\Res_{R/R_{0}} X_{1,q})) = \dim \Res_{R'/R_{0}} Y = [K':\bbQ] > 0\,,
	\]
	we see that $T$ is indeed a BCP obstruction to RoS Chabauty for $X_{1,q}$.
	
	Of course, if $K$ is not CM, but contains a CM field, we may apply the same argument to the CM subfield to construct a base change obstruction to RoS Chabauty for $X_{1,q}$.
	
	In contrast, we note that Theorem~\ref{thm:main-theorem-2} requires no assumptions on $K$.
\end{remark}

\section{Classical Chabauty and genus $0$ descent.} \label{sec:classical-chab+descent}

Suppose that $C/K$ is a genus $0$ curve, $\fkm$ is a reduced effective divisor on $C$ and that $X = \overline{X} \smallsetminus \Gamma$ is a regular model for $(C,\fkm)$ over $\OKS = \cO_{K,S}$. Let $J$ be the generalized Jacobian of $X$. Under the generalized Leopoldt conjecture~\ref{conj:general-leopoldt}, the classical Chabauty set $X(\cO_{K_{\fkp}})_{S,1}$ containing $X(\OKS)$ is finite whenever there is some torus $T \subset J$ such that 
\begin{align}\label{eqn:classical-chab-genus-0}
\rank T(\OKS) < \dim T\,.
\end{align}

We start by proving a bound on the ranks of tori over the rings of integers of number fields. Recall that a torus $T$ over $K$ is \emph{anistropic} if $\hom_{K}(T, \mathbb G_{m}) = \{1\}$. Equivalently, the isogeny decomposition of $T$ as a product of irreducible tori over $K$ does not contain any copies of $\mathbb G_{m}$. Similarly, we say that a torus over $\cO_{K}$ or $\cO_{K,S}$ is anisotropic if its generic fiber is anisotropic. The following lemma is likely known to experts but we could not find a reference:

\begin{lemma} \label{lem:rank-anisotropic}
	Suppose that $T/\mathcal O_{K}$ is an anisotropic torus. Then,
	\begin{align*}
	r_{2}(K) \dim T \leq \rank T(O_{K}) \leq (r_{1}(K) + r_{2}(K)) \dim T\,.
	\end{align*}	
\end{lemma}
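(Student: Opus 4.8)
The plan is to compute $\rank T(\cO_{K})$ \emph{exactly} as a sum over the archimedean places of $K$ of local invariants of the cocharacter lattice of $T$, and then to read off both inequalities from elementary bounds on those local invariants.

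Set $n = \dim T$ and let $N := \operatorname{Hom}_{\overline{K}}(\bbG_{m}, T_{\overline{K}})$ be the cocharacter lattice, a free $\bbZ$-module of rank $n$ on which $\Gal(\overline{K}/K)$ acts through a finite quotient $G = \Gal(L/K)$. The hypothesis enters here in an essential way: since $T$ is a torus over $\cO_{K}$ and $\Spec\cO_{K}$ is connected and normal, $T$ is split by a connected finite \'etale cover of $\Spec\cO_{K}$; enlarging $L$, I may assume $\cO_{L}/\cO_{K}$ is finite \'etale Galois with group $G$ and $T_{\cO_{L}}\cong\bbG_{m,\cO_{L}}^{n}$. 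Anisotropy of $T$ says precisely that $N^{G}=0$, hence also $N_{\bbQ}^{G}=0$, where $N_{\bbQ}:=N\otimes_{\bbZ}\bbQ$.

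The main step is a ``Dirichlet unit theorem for tori.'' By \'etale descent, $T(\cO_{K})=T(\cO_{L})^{G}$, and the splitting over $\cO_{L}$ provides a $G$-equivariant isomorphism $T(\cO_{L})\cong N\otimes_{\bbZ}\cO_{L}^{\times}$ (diagonal action). The logarithmic embedding makes the classical Dirichlet sequence $G$-equivariant, giving a short exact sequence of $\bbQ[G]$-modules
\[
0 \longrightarrow \cO_{L}^{\times}\otimes\bbQ \longrightarrow \bbQ[\Sigma_{\infty}(L)] \xrightarrow{\ \mathrm{sum}\ } \bbQ \longrightarrow 0,
\]
where $\Sigma_{\infty}(L)$ is the set of archimedean places of $L$ and $G$ acts by permuting them. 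Tensoring with $N_{\bbQ}$ (flat over $\bbQ$) and taking $G$-invariants (exact, as $\#G<\infty$), the rightmost term contributes $N_{\bbQ}^{G}=0$, whence $\rank T(\cO_{K}) = \dim_{\bbQ}\bigl(N_{\bbQ}\otimes\bbQ[\Sigma_{\infty}(L)]\bigr)^{G}$. Decomposing $\Sigma_{\infty}(L)$ into its $G$-orbits over $\Sigma_{\infty}$ and applying the projection formula together with Frobenius reciprocity, the orbit over a place $v$ contributes $\dim_{\bbQ}N_{\bbQ}^{G_{v}}$, where $G_{v}$ is a decomposition group at a place of $L$ above $v$; so
\[
\rank T(\cO_{K}) \;=\; \sum_{v\in\Sigma_{\infty}} \dim_{\bbQ} N_{\bbQ}^{G_{v}}.
\]

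To conclude I would split the archimedean places by type. At a complex $v$ the group $G_{v}$ is trivial, so the term equals $\dim_{\bbQ}N_{\bbQ}=n$; there are $r_{2}(K)$ such places, which already forces $\rank T(\cO_{K})\ge r_{2}(K)\,n$. At a real $v$ one has $\#G_{v}\in\{1,2\}$, so $0\le\dim_{\bbQ}N_{\bbQ}^{G_{v}}\le n$; there are $r_{1}(K)$ such places, giving $\rank T(\cO_{K})\le (r_{1}(K)+r_{2}(K))\,n$. This is the lemma. The part requiring care is not the estimates but the setup: one genuinely uses that $T$ is a torus \emph{over $\cO_{K}$} — equivalently, that it is split by an extension unramified at all finite primes — so that descent and the $G$-equivariant comparison with $\cO_{L}^{\times}$ are available; for a bare torus over $K$ the statement fails unless one passes to $S$-units. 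Equivalently, one could simply invoke the general formula $\rank T(\cO_{K,S})=\sum_{v\in S\cup\Sigma_{\infty}}\dim_{\bbQ}N_{\bbQ}^{G_{v}}-\dim_{\bbQ}N_{\bbQ}^{G_{K}}$ — the analogue for arbitrary tori of Lemma~\ref{lem:rank+dim-genus-0-jac} — with $S=\emptyset$; I spell the argument out because I could not locate a reference at this level of generality.
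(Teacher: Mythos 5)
Your proof is correct, and at its core it runs on the same engine as the paper's: express the rank as the pairing of the (co)character module of $T$ with the permutation representation of $G=\Gal(L/K)$ on the archimedean places of a splitting field $L$, use anisotropy to discard the trivial summand coming from the sum map, and then exploit the fact that each place of $K$ contributes a representation induced from a decomposition group of order $1$ or $2$. The genuine differences are in the inputs and the bookkeeping. The paper takes an arbitrary Galois splitting field of $T_K$ and simply \emph{cites} the rank formula $\rank T(\cO_K)=\langle\chi_T,\chi_L\rangle$ (\cite[Corollary~6.9]{eisentrager-03}), then bounds the multiplicity of each irreducible $\rho$ of $G$ in the archimedean permutation representation between $r_2(K)\dim\rho$ and $(r_1(K)+r_2(K))\dim\rho$ and sums over irreducibles; you instead re-derive that formula from scratch --- descent $T(\cO_K)=T(\cO_L)^G$, the $G$-equivariant identification $T(\cO_L)\cong N\otimes_{\bbZ}\cO_L^{\times}$, the equivariant Dirichlet sequence, and semisimplicity --- and then aggregate place-by-place via $\dim_{\bbQ}N_{\bbQ}^{G_v}$, which is the paper's estimate repackaged through Frobenius reciprocity. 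What your route buys is self-containedness; what it costs is the extra input that a torus over $\cO_K$ is split by a finite \'etale (hence everywhere unramified) Galois cover $\Spec\cO_L\to\Spec\cO_K$ (isotriviality over a normal base), which you correctly flag and which the citation-based route avoids. One caveat on your closing remark: the unramifiedness is needed only for your on-the-nose identification $T(\cO_L)\cong N\otimes\cO_L^{\times}$, not for the truth of the lemma --- the general formula $\rank T(\cO_{K,S})=\sum_{v\in S\cup\Sigma_{\infty}}\dim_{\bbQ}N_{\bbQ}^{G_v}-\dim_{\bbQ}N_{\bbQ}^{G_K}$ you quote at the end is valid for the standard integral model of an arbitrary $K$-torus, ramified splitting field or not (this is essentially what the paper's reference provides), so the assertion that ``for a bare torus over $K$ the statement fails'' is overstated.
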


\begin{proof}
	Let $L/K$ be a Galois extension that splits $T_{K}$ and set $G = \Gal(L/K)$. Then, $G$ acts on the character lattice $X(T) \mathrm{Hom}_{L}(T, \mathbb G_{m})$. Let $\chi_{T}$ be the associated $\bbC$-valued representation of $G$. The group $G$ also acts on $\cO_{L}^{\times}$. Let $\chi_{L}$ be the associated $\bbC$-valued representation.
	By \cite[Corollary~6.9]{eisentrager-03}, $\rank T(\cO_{K}) = \langle \chi_{T}, \chi_{L}\rangle$. The group $G$ also acts on the infinite places of $L$. Let $\psi$ be the corresponding representation of $G$ acting on $\bbC^{r_{1}(L) + r_{2}(L)}$, viewed as formal linear combinations of infinite place of $L$. We have $\psi \cong \chi_{L} \oplus \mathbf{1}$. Moreover, since $T$ is anisotropic, $\langle \chi_{T}, \mathbf{1}\rangle = 0$. So, $\rank T(\cO_{K}) = \langle \chi_{T}, \psi \rangle$.
	
	We now consider the structure of $\psi$. The action of $G$ permutes the set of places above any particular infinite place of $K$. Moreover, if there are $\# G$ places above a certain place of $K$ then the action on the set of places of $L$ above this place of $K$ is by the right regular representation $\mathrm{Ind}^{G}_{\{1\}} \mathbf{1}$. This occurs if the place of $K$ is complex or if the place of $K$ is real and the places above it are also real.	
	 Alternately, if there are $\frac{1}{2}\# G$ places above a certain place of $K$, then the stabilizer of some place is a subgroup $H\subset G$ of order $2$ and the representation on this set of places is by $\mathrm{Ind}^{G}_{H} \mathbf{1}$. This case occurs if the place of $K$ is real and the places above it are complex. Let $m_{\rho}(\psi)$ denote the multiplicity of the irreducible representation $\rho$ in $\psi$ and let $m_{\rho}(\chi_{T})$ denote the multiplicity of $\rho$ in $\chi_{T}$. Since every irreducible representation $\rho$ of $G$ occurs with multiplicity $\dim \rho$ in the right regular representation and with multiplicity at most $\dim \rho$ in $\mathrm{Ind}^{G}_{H} \mathbf{1}$, we conclude
	\begin{align}\label{eqn:irrep-multiplicity}
	r_{2}(K) \dim \rho \leq m_{\rho}(\psi) \leq (r_{1}(K) + r_{2}(K)) \dim \rho\,.
	\end{align}
	Now, since $\rank T(\cO_{K}) = \langle \chi_{T}, \psi \rangle$,
	\[
	\rank T(O_{K}) = \sum_{\mathrm{irreps. } \,  \rho \, \mathrm{ of }\,  G} m_{\rho}(\chi_{T}) m_{\rho}(\psi)\,.
	\]
	Applying \eqref{eqn:irrep-multiplicity} to each $m_{\rho}(\psi)$ and using the fact that \[\dim T = \sum_{\mathrm{irreps. } \,  \rho \, \mathrm{ of }\,  G} m_{\rho}(\chi_{T}) \dim \rho\,,\] we arrive at the desired result.
\end{proof}

To apply Lemma~\ref{lem:rank-anisotropic} to study Chabauty's method, we will consider the case where $T$ is a subtorus of the generalized Jacobian $J$.

\begin{theorem}\label{thm:classical-Chabauty}
Let $K$ be a number field which is not totally real. Let $S$ be any finite set of finite places of $K$, let $\mathfrak p$ be a finite place of $K$ which is not in $S$, and let $\fkm$ be a reduced effective divisor on $\mathbb P^1_{K}$. Let $X/\mathcal O_{K,S}$ be a regular model of $(\mathbb P^1_{K}, \fkm)$. 

Suppose that $[K: \mathbb Q] \geq 3$, that the generalized Leopoldt Conjecture~\ref{conj:general-leopoldt} holds over $K$. Then, $X(\cO_{K_{\fkp}})_{S,1} = X(\mathcal O_{K_{\mathfrak p}})$. In particular, the set of points  $X(\cO_{K_{\fkp}})_{S,1}$ cut out by the classical Chabauty's method is either empty or infinite.
\end{theorem}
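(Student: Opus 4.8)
The plan is to reduce the assertion, under the generalized Leopoldt conjecture, to showing that no subtorus $T \subseteq J$ has $\rank T(\cO_K) < \dim T$, and then to establish this rank lower bound using Lemma~\ref{lem:rank-anisotropic} together with the hypotheses $[K:\bbQ] \geq 3$ and $K$ not totally real. For the reduction, recall that $X(\cO_{K_{\fkp}})_{S,1}$ consists of the $P \in X(\cO_{K_{\fkp}})$ with $\log j(P) \in \Span_{K_{\fkp}} \log J(\cO_{K,S}) \subseteq K_{\fkp}^{\dim J}$, so it equals all of $X(\cO_{K_{\fkp}})$ once this span is $K_{\fkp}^{\dim J}$. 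Fix an isogeny $\prod_i T_i \to J$ over $K$ where each $T_i$ is an irreducible torus, and let $T_i' \subseteq J$ be the image of $T_i$; then $\operatorname{Lie}(J) = \sum_i \operatorname{Lie}(T_i')$ and $\log T_i'(\cO_{K,S}) \subseteq \log J(\cO_{K,S})$, so it suffices that $\Span_{K_{\fkp}} \log T_i'(\cO_{K,S}) = \operatorname{Lie}(T_i') \otimes K_{\fkp}$ for each $i$. Since $\cO_K \subseteq \cO_{K,S}$, this follows from Conjecture~\ref{conj:general-leopoldt} applied to the irreducible torus $T_i'$ as soon as $\rank T_i'(\cO_K) \geq \dim T_i'$. (This is essentially the reduction in the discussion opening Section~\ref{sec:classical-chab+descent}.) As rank and dimension are additive over the isogeny decomposition of a torus into irreducible ones, it is therefore enough to prove: every irreducible torus $T$ over $K$ satisfies $\rank T(\cO_K) \geq \dim T$.

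I would split this into two cases. If $T \cong \bbG_{m,K}$, then $\rank T(\cO_K) = \rank \cO_K^\times = r_1(K) + r_2(K) - 1$ by Dirichlet's unit theorem, and $[K:\bbQ] = r_1(K) + 2 r_2(K) \geq 3$ forces $r_1(K) + r_2(K) \geq 2$, so $\rank T(\cO_K) \geq 1 = \dim T$. If $T$ is anisotropic, Lemma~\ref{lem:rank-anisotropic} gives $\rank T(\cO_K) \geq r_2(K) \dim T$; the lemma is stated for tori with good reduction over $\cO_K$, but its proof only uses that $\rank T(\cO_K) = \langle \chi_T, \chi_L \rangle$ is governed by the permutation action of $\Gal(L/K)$ on the infinite places of $L$, so the same bound holds for every anisotropic $T/K$. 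Since $K$ is not totally real, $r_2(K) \geq 1$, and again $\rank T(\cO_K) \geq \dim T$. This proves the required inequality in both cases, so $X(\cO_{K_{\fkp}})_{S,1} = X(\cO_{K_{\fkp}})$. The final clause of the theorem then follows because the $\cO_{K_{\fkp}}$-points of a smooth affine curve, when nonempty, form a positive-dimensional $\fkp$-adic analytic set and so are infinite.

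I expect the only genuine work to be bookkeeping in the reduction rather than any new idea: realizing the over-$K$ isogeny factors of $J$ as subtori with compatible integral models so that the N\'eron mapping property gives $T_i'(\cO_{K,S}) \hookrightarrow J(\cO_{K,S})$ with finite kernel, checking $\operatorname{Lie}(J) = \sum_i \operatorname{Lie}(T_i')$, and passing between $\cO_K$- and $\cO_{K,S}$-integral points while staying within the scope of Conjecture~\ref{conj:general-leopoldt}. The arithmetic heart of the theorem is the bound $r_2(K) \dim T \leq \rank T(\cO_K)$ for anisotropic tori, which is already isolated in Lemma~\ref{lem:rank-anisotropic}; once the reduction is in place, the remaining argument is short.
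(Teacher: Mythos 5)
Your proposal is correct and takes essentially the same route as the paper: both reduce, via the generalized Leopoldt conjecture, to showing $\rank T(\cO_{K}) \geq \dim T$ for every irreducible subtorus $T$ of the generalized Jacobian, and both prove this by splitting into $T \cong \bbG_{m,K}$ (Dirichlet's unit theorem, using $[K:\bbQ] \geq 3$) and $T$ anisotropic (Lemma~\ref{lem:rank-anisotropic}, using $r_{2}(K) \geq 1$ since $K$ is not totally real). The only cosmetic difference is that the paper first reduces to $S = \emptyset$ by spreading out, whereas you stay over $\cO_{K,S}$ and use $\cO_{K} \subseteq \cO_{K,S}$ directly.
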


\begin{proof}
Any such $X$ spreads out to a regular model of $(\mathbb P^1_{K},\fkm)$ over $\mathcal O_{K}$, so it suffices to consider the case $S = \emptyset$. 
	
Let $J$ be the generalized Jacobian of $X$. Since $[K: \mathbb Q] \geq 3$, we have 
\begin{align} \label{eqn:rank-Gm}
\rank \mathbb G_{m}(\mathcal O_{K,S}) = r_{1}(K) + r_2(K) + \#S - 1 \geq 1 = \dim \mathbb G_{m}\,.
\end{align}
 Since $K$ is not totally real, combining Lemma~\ref{lem:rank-anisotropic} with equation \eqref{eqn:rank-Gm} implies that for any irreducible torus $T \subset J$, we have
 \(
 \rank T(\mathcal O_{K}) \geq \dim T\,.
 \)
 In particular, the generalized Leopoldt Conjecture~\ref{conj:general-leopoldt} implies that $\Span_{K_{\fkp}} \log T(\cO_{K}) = K_{\fkp}^{\dim T}$. Altogether, we see that $\Span_{K_{\fkp}} \log J(\cO_{K}) = K_{\fkp}^{\dim J}$, and so $X(\cO_{K_{\fkp}})_{S,1} = X(\mathcal O_{K_{\mathfrak p}})\,.$ To conclude, we note that $ X(\mathcal O_{K_{\mathfrak p}})$ is either empty or infinite.	
\end{proof}

As an immediate consequence of Theorem~\ref{thm:classical-Chabauty}, we have:

\begin{corollary}\label{cor:Chabauty+descent}
Let $K$ be a number field, let $S$ be any finite set of finite places of $K$, and let $\mathfrak p$ be a finite place of $K$ which is not in $S$. Suppose that $[K: \bbQ] \geq 3$, that $K$ is not totally real, and that the generalized Leopoldt Conjecture~\ref{conj:general-leopoldt} holds over $K$.

Then, for any finite set $\mathcal D$ of genus zero covers $f_{X}: X \to \bbP^1_{\OKS} \smallsetminus \{0,1,\infty\}$ satisfying
\[
(\bbP^1_{\OKS} \smallsetminus \{0,1,\infty\})(\OKS) \subset \bigcup_{X \in \mathcal D} f_{X}(X(\OKS))\,,
\]
 the set of points 
\[
\bigcup_{X \in \mathcal D} f_X(X(\cO_{K_{\fkp}})_{S,1}) \subset (\bbP^1_{\OKS} \smallsetminus \{0,1,\infty\})(\cO_{K_{\fkp}})
\]
 cut out by the classical Chabauty's method plus descent is either empty or infinite.
\end{corollary}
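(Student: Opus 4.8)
The plan is to deduce the corollary directly from Theorem~\ref{thm:classical-Chabauty}, applied one cover at a time, followed by an elementary empty/infinite dichotomy. Beyond Theorem~\ref{thm:classical-Chabauty} the only inputs will be two trivial facts: a non-constant morphism of curves has finite fibres, and --- as already observed at the end of the proof of Theorem~\ref{thm:classical-Chabauty} --- the set of $\cO_{K_{\fkp}}$-points of a regular model of a smooth affine curve is either empty or infinite.

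First I would fix $X \in \mathcal{D}$. Since $f_{X} : X \to \bbP^{1}_{\OKS}\smallsetminus\{0,1,\infty\}$ is a genus $0$ cover, $X$ is a regular model of some pair $(C,\fkm)$ with $C/K$ a genus $0$ curve and $\fkm$ a reduced effective divisor on $C$. Theorem~\ref{thm:classical-Chabauty} is phrased for $C = \bbP^{1}_{K}$, but its proof uses only the structural description of $J_{X}$ from Lemma~\ref{lem:rank+dim-genus-0-jac} --- namely that $J_{X}$ is a torus each of whose irreducible isogeny factors is $\bbG_{m}$ or anisotropic --- together with the rank bound for $\bbG_{m}(\OKS)$ and Lemma~\ref{lem:rank-anisotropic}, none of which is sensitive to whether $C$ has a $K$-point. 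So the argument of Theorem~\ref{thm:classical-Chabauty} applies verbatim, and under the standing hypotheses ($[K:\bbQ]\geq 3$, $K$ not totally real, generalized Leopoldt over $K$) it gives $X(\cO_{K_{\fkp}})_{S,1} = X(\cO_{K_{\fkp}})$ for each $X \in \mathcal{D}$. Applying $f_{X}$ and taking the union, the set of points cut out by Chabauty plus descent is therefore exactly $\bigcup_{X\in\mathcal{D}} f_{X}\bigl(X(\cO_{K_{\fkp}})\bigr)$.

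It then remains to run the dichotomy. If $X(\cO_{K_{\fkp}}) = \emptyset$ for every $X \in \mathcal{D}$, this union is empty. Otherwise choose $X_{0} \in \mathcal{D}$ with $X_{0}(\cO_{K_{\fkp}}) \neq \emptyset$; then $X_{0}(\cO_{K_{\fkp}})$ is infinite, and since $f_{X_{0}}$ has finite fibres its image $f_{X_{0}}\bigl(X_{0}(\cO_{K_{\fkp}})\bigr)$ is infinite, whence so is the union. The one point that is not purely formal is the claim that Theorem~\ref{thm:classical-Chabauty} may be invoked for every member of the covering collection and not merely for punctured copies of $\bbP^{1}_{K}$; I expect this to be the main --- and still essentially trivial --- obstacle, and it is dispatched by the remark above that the proof of that theorem never uses a rational point on the underlying genus $0$ curve.
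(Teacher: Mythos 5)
Your proposal is correct and is essentially the argument the paper intends: the paper derives the corollary as an "immediate consequence" of Theorem~\ref{thm:classical-Chabauty}, i.e.\ exactly your cover-by-cover application of that theorem followed by the empty/infinite dichotomy for $X(\cO_{K_{\fkp}})$ and the finiteness of the fibres of $f_X$. Your side remark that Theorem~\ref{thm:classical-Chabauty} extends verbatim to genus $0$ curves without a $K$-point (since its proof only uses that the generalized Jacobian is a torus, Lemma~\ref{lem:rank-anisotropic}, and the rank of $\bbG_m(\OKS)$) correctly dispatches the only mismatch between the theorem's statement and the covers allowed in $\mathcal D$.
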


\begin{remark}	
	In contrast to Corollary~\ref{cor:Chabauty+descent}, when $K = \bbQ$ or is imaginary quadratic, \cite{poonen-19} shows that when $\mathcal D$ is the collection of $(\bbZ/q\bbZ)$-covers of $\bbP^1_{R}\smallsetminus \{0,1,\infty\}$ which extend to be totally ramified above $0$ and $\infty$ and \'etale elsewhere, the set 
	\[
	\bigcup_{X \in \mathcal D} f_{X}\left(X(\cO_{K_{\fkp}})_{S,1}\right) \subset (\bbP^1_{\OKS} \smallsetminus \{0,1,\infty\})(\cO_{K_{\fkp}})
	\]
	is finite. 
\end{remark}

\begin{remark}
	If $K$ is totally real and $[K:\mathbb Q] + \# S \geq 4,$ one can check that for any regular model $X$ of $(\mathbb P^1_{K}, \fkm)$, with generalized Jacobian $J$, we have $\rank  J(\mathcal O_{K,S}) \geq \dim J$. Naively, one might expect this to imply that $X(\cO_{K_{\fkp}})_{S,1}$ is infinite when it it nonempty, but this need not be the case. For instance, if $S = \emptyset$ and one of the Galois orbits of the divisor $\fkm$ defines a CM field containing $K$, then $J$ will have a nontrivial subtorus $T$ with $\rank T(\mathcal O_{K}) = 0$.
	
	To give a concrete example, if $\fkm = \{\pm i\}$, then $ J = \Res_{\cO_{K[i]}/\cO_{K}} \mathbb G_{m}$ has a subtorus which is isogenous to the quotient $T =  J/\mathbb G_{m}$. Since 
	\[
	\rank  J(\cO_{K}) = \rank \cO_{K[i]}^{\times} = \rank \cO_{K}^{\times} = \rank \mathbb G_{m}(\mathcal O_{K})\,,
	\] we see that $\rank T(\cO_{K}) = 0$. In particular, $X(\cO_{K_{\fkp}})_{S,1}$ is finite even though the Chabauty inequality $\rank J(\mathcal O_{K,S}) \geq \dim J$ is not satisfied for $X$. 
	
	More generally, let $\fkm_{1,q}' = \{0,\infty\} \cup \{x: x^{q} - 1 = 0\}$ and let $X_{1,q}'$ be a regular model of $(\bbP^1_{K}, \fkm_{1,q}')$ over $\cO_{K}$. The generalized Jacobian of $X_{1,q}'$ has a subtorus $T$ of dimension $(q-1)/2$ with $\rank T(\cO_{K}) = 0$. Arguing as in the proof of Lemma~\ref{lem:rank-of-subtori-2}, one can show for that $\rank T(\cO_{K,S}) < (q-1)/2$ and so $X_{1,q}'(\cO_{K_{\fkp}})_{S,1}$ is finite if $q$ is large enough. 
\end{remark}

\section{No BCP obstructions to RoS Chabauty for $\mathbb P^1 \smallsetminus \{x: x^q - 1 = 0, x \neq 1\}$}\label{sec:RoS-chab+descent}
	
Let $\fkm_{1,q} = \{x: x^q - 1 = 0, x \neq 1\}$ and let $\Gamma_{1,q}$ be the closure of the support of $\fkm_{1,q}$ in $\bbP^1_{\OKS}$. In this section, we show that when $K$ does not contain a CM subfield, and for a sufficiently large prime $q$, there are no BCP obstructions to RoS Chabauty for $\mathbb P^1_R \smallsetminus \Gamma_{1,q}$. Our main goal is to prove Theorem~\ref{thm:main-theorem}. 

This case is particularly difficult to control for the purposes of RoS Chabauty for two closely related reasons. First, $\Res_{R[\zeta_{q}]/R_{0}} \bbG_{m}/\Res_{R/R_{0}} \bbG_{m}$ has many subtori, including many subtori with large rank if $K$ has a large totally real subfield. Understanding how translates of these subtori intersect other subvarieties seems to be a subtle geometric problem. Second, since the Galois group $\Gal(K[\zeta_{q}]/K)$ is abelian, it imposes less structure than the nonabelian Galois group $\Gal(K[\zeta_{q}, \sqrt[q]{\alpha}]/K)$ will impose in Section~\ref{sec:RoS-chab+descent-2}. 

Before we begin a series of technical lemmas, we sketch out the proof. Given a BCP torus $T$, Lemma~\ref{lem:rank-deficiency} allows us to bound $\dim T - \dim \overline{T(R_0)}$ from below in terms of ranks and dimensions of the generalized Jacobians of certain curves. Theorem~\ref{thm:rank-bounds} shows that if $X/\OKSp$ is any curve which becomes isomorphic to $X_{\alpha,q}$ after base change to $R$, then  the rank of the $\OKSp$ points of the generalized Jacobian $J_X$ of $X$ is `small'. More precisely, for large $q$, there is a constant $c > 0$ such that $\rank J_X(\OKSp) + cq < \dim J_X$. Lemma~\ref{lem:covers-rank} shows that if the punctures of $X$ form a single Galois orbit,  $(X,T)$ is a $P$-successor of some $(Y,T')$, and $\rank J_{X}(R)$ is `small' in the sense above, then for large $q$ 
the difference $\left( \rank J_X(\OKSp) - \rank J_{Y}(\OKSp) \right) $ is `small' compared to $[K':\bbQ](\dim J_{X} - \dim J_{Y})$.
Combining these lemmas, any BCP torus $T$ for $X_{1,q}$ satisfies $\dim T - \dim \overline{T(R_0)} > \dim \Res_{R/R_{0}} X_{1,q}$, so there is no BCP obstruction to RoS Chabauty for $X_{1,q}$.

\begin{lemma} \label{lem:rank-deficiency}
	Suppose that $T$ is an $n$-BCP torus for $X/R$. Set $T_n = T, R_n' = R,$ and $X_n = X$. For each $i \in \{0, \dots, n-1\}$ there is a ring of $S$-integers $R_{i}'$ with fraction field $K_{i}$, a curve $X_{i}/R_{i}'$ with generalized Jacobian $J_{i}$, and an $i$-BCP torus $T_{i}$ such that $T_0$ is a $0$-BCP torus for $X_0$ and $(X_{i+1}, T_{i+1})$ is either a BC-successor or a P-successor of $(X_{i}, T_{i})$. 
	
	Set
	\begin{align*}
	\delta_{i} = 
	\begin{cases}
	[K_{0}:\bbQ] \cdot \dim J_{0} - \rank J_{0}(R_{0}') & \text{ if } i = 0\,,\\[1ex]
	0 & \text{ if } (X_{i}, T_{i}) \text{ is a BC-successor of } (X_{i-1}, T_{i-1})\,,\\[1ex]
	\!\begin{aligned}[t]
	&[K_{i}:\bbQ] \cdot (\dim J_{i} - \dim J_{i-1}) \\
	&\quad 	- (\rank J_{i}(R_{i}') - \rank J_{i-1}(R_{i-1}') )
	\end{aligned}
	& \text{ if } (X_{i}, T_{i}) \text{ is a P-successor of } (X_{i-1}, T_{i-1})\,.
	\end{cases}
	\end{align*}
	Then,
	\[
	\dim T - \dim \overline{T(R_{0})} \geq \sum_{i=0}^{n} \max(0, \delta_{i})\,.
	\]
\end{lemma}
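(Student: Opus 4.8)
The plan is to track, for a torus $U$ over $R_{0}=\cO_{\bbQ,S_{0}}$, the \emph{deficiency} $\mathrm{def}(U)\colonequals \dim U-\dim\overline{U(R_{0})}$, and to prove by induction along the chain $(X_{i},T_{i},R_{i}')$ that $\mathrm{def}(T_{i+1})\geq \mathrm{def}(T_{i})+\max(0,\delta_{i+1})$ at every step, with base case $\mathrm{def}(T_{0})\geq \max(0,\delta_{0})$; the statement then follows. Three formal facts about $\mathrm{def}$ drive the argument. (i) $\dim\overline{U(R_{0})}\leq\min(\dim U,\ \rank U(R_{0}))$ — the rank bound because the $p$-adic closure of a finitely generated group of rank $r$ is the image of a continuous homomorphism from $\bbZ_{p}^{r}$ — so $\mathrm{def}(U)\geq \max(0,\ \dim U-\rank U(R_{0}))$. (ii) $\mathrm{def}$ is additive over products, since $\overline{(U\times V)(R_{0})}=\overline{U(R_{0})}\times\overline{V(R_{0})}$ in the product topology. (iii) $\mathrm{def}$ is an isogeny invariant over $R_{0}$: an isogeny $\psi\colon A\to B$ has finite kernel and cokernel on $A(R_{0})\to B(R_{0})$ and on $A(\bbZ_{p})\to B(\bbZ_{p})$ — using finiteness of $H^{1}_{\mathrm{fppf}}$ of a finite flat group scheme over $R_{0}$, resp.\ over $\bbZ_{p}$ — and passing to a finite-index subgroup or quotient changes neither $\dim$ nor the dimension of the $p$-adic closure. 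Combining (ii) and (iii): a torus isogenous over $R_{0}$ to a product has deficiency equal to the sum of the deficiencies of the factors.

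For the base case, $T_{0}=\Res_{R_{0}'/R_{0}}J_{0}$, so $\dim T_{0}=[K_{0}:\bbQ]\dim J_{0}$ and $T_{0}(R_{0})=J_{0}(R_{0}')$ by adjunction for Weil restriction, and (i) gives $\mathrm{def}(T_{0})\geq\max(0,\ [K_{0}:\bbQ]\dim J_{0}-\rank J_{0}(R_{0}'))=\max(0,\delta_{0})$. For a BC-successor step, the inclusion $\Res_{R_{i}'/R_{0}}J_{i}\hookrightarrow\Res_{R_{i+1}'/R_{0}}J_{i+1}$ attached to $X_{i+1}\cong (X_{i})_{R_{i+1}'}$ is a closed immersion carrying $T_{i}$ isomorphically onto $T_{i+1}$ as $R_{0}$-group schemes, compatibly with $R_{0}$- and $\bbZ_{p}$-points; hence $\mathrm{def}(T_{i+1})=\mathrm{def}(T_{i})$, while $\delta_{i+1}=0$, so the step inequality holds with equality.

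The heart is the P-successor step, arising from a morphism $f\colon X_{i+1}\to X_{i}$ over $R'\colonequals R_{i}'=R_{i+1}'$. Let $P$ be the identity component of $\ker(f_{*}\colon J_{i+1}\to J_{i})$ — a torus over $R'$, the connected generalized Prym — and recall that the pullback $f^{*}\colon J_{i}\to J_{i+1}$ satisfies $f_{*}\circ f^{*}=[\deg f]$. This identity makes $f_{*}$ surjective and shows that $\mathrm{coker}(f_{*}\colon J_{i+1}(R')\to J_{i}(R'))$ is finite (it is a quotient of $J_{i}(R')/[\deg f]J_{i}(R')$), so $\dim P=\dim J_{i+1}-\dim J_{i}$ and $\rank P(R')=\rank J_{i+1}(R')-\rank J_{i}(R')$; therefore $\delta_{i+1}=\dim(\Res_{R'/R_{0}}P)-\rank(\Res_{R'/R_{0}}P)(R_{0})$ and, by (i), $\mathrm{def}(\Res_{R'/R_{0}}P)\geq\max(0,\delta_{i+1})$. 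Since $\Res$ commutes with kernels and sends the torus $P$ to a connected torus of dimension $[K':\bbQ]\dim P=\dim T_{i+1}-\dim T_{i}$, the subgroup $\Res_{R'/R_{0}}P$ is the identity component of $\ker f_{*}$ inside $T_{i+1}=f_{*}^{-1}(T_{i})^{\circ}$; moreover $f^{*}(T_{i})\subseteq T_{i+1}$, and a dimension count (via $f_{*}\circ f^{*}=[\deg f]$) shows that the multiplication map $\Res_{R'/R_{0}}P\times f^{*}(T_{i})\to T_{i+1}$ is a surjective homomorphism with finite kernel, i.e.\ an $R_{0}$-isogeny, while $f^{*}\colon T_{i}\to f^{*}(T_{i})$ is an $R_{0}$-isogeny as well. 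Thus $T_{i+1}$ is isogenous over $R_{0}$ to $\Res_{R'/R_{0}}P\times T_{i}$, and by (ii)--(iii), $\mathrm{def}(T_{i+1})=\mathrm{def}(\Res_{R'/R_{0}}P)+\mathrm{def}(T_{i})\geq\mathrm{def}(T_{i})+\max(0,\delta_{i+1})$. Induction along the chain (BC steps contributing $0=\max(0,\delta_{i})$) yields $\dim T-\dim\overline{T(R_{0})}=\mathrm{def}(T_{n})\geq\sum_{i=0}^{n}\max(0,\delta_{i})$.

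I expect the P-successor bookkeeping to be the main obstacle: verifying that $\Res_{R'/R_{0}}$ carries the connected Prym to exactly the identity component of $\ker f_{*}$ inside $T_{i+1}$ (matching component groups under $\Res$ and under $(-)^{\circ}$), that the pullback $f^{*}$ exists and $f_{*}\circ f^{*}=[\deg f]$ for the relevant \emph{integral} models of generalized Jacobians, and that the multiplication and pullback maps are genuine $R_{0}$-isogenies — all of which may require absorbing finitely many primes into $S$, which the setup permits. The finiteness of $H^{1}_{\mathrm{fppf}}$ over $R_{0}$ underpinning (iii) is routine since $R_{0}^{\times}$ is finitely generated and $\mathrm{Pic}(R_{0})=0$. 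Granting these, the estimate is a formal consequence of (i)--(iii).
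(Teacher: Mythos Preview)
Your proof is correct and follows essentially the same approach as the paper: induction on $n$, with the BC-successor step trivial and the P-successor step handled by observing that $T_{i+1}$ is isogenous to a product $(\Res_{R'/R_{0}}P)\times T_{i}$, so that the deficiency splits additively. Your write-up is considerably more explicit than the paper's---you spell out the isogeny invariance of $\mathrm{def}$, construct the isogeny via the pullback $f^{*}$ and the multiplication map, and verify the rank identity $\rank P(R')=\rank J_{i+1}(R')-\rank J_{i}(R')$ from $f_{*}\circ f^{*}=[\deg f]$---whereas the paper simply asserts ``up to isogeny, $T_{n}\sim P\times T_{n-1}$'' and the resulting additivity without further justification.
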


\begin{proof}
	The proof is by induction on $n$.
	
	For $n = 0$, we have $T_0 = \Res_{R_{0}'/R_{0}} J_0$, so the claim is equivalent to $\rank T_{0}(R_{0}') \geq \dim \overline{T_{0}(R_{0}')}$, which holds because the $\log$ map respects the $p$-adic topology.
	
	Suppose the claim holds for $(X_{n-1}, T_{n-1})$. If $(X_{n},T_{n})$ is the BC-successor of $(X_{n-1}, T_{n-1})$ then $T_{n} \cong T_{n-1}$ and $\delta_{n} = 0$, so the claim is immediate. 
	
	Otherwise, $(X_{n},T_{n})$ is the P-successor of  $(X_{n-1}, T_{n-1})$, so there is a map $f: X_{n} \to X_{n-1}$ and $R_{n}' = R_{n-1}'$. Let $O$ be the identity in $J_{n-1}$ and set $P = f_{*}^{-1}(O)$ for $f_{*}$ as in Definition~\ref{def:p-successor}. Up to isogeny, $T_{n} \sim  P \times T_{n-1}$ and $\Res_{R_{n}'/R_{0}} J_{n} \sim P \times  \Res_{R_{n-1}'/R_{0}} J_{n}$. So,
	\begin{align*}
	\dim T_{n} -  \dim \overline{ T_{n}(R_{0})} & = \left( \dim T_{n-1} - \dim \overline{ T_{n-1}(R_{0})}\right) + \left(\dim P - \dim \overline{P(R_{0})}\right) \\
	& \geq \sum_{i=0}^{n-1} \max(0, \delta_{i}) + \max(0, \dim P - \rank P(R_{0})) \\
	& = \sum_{i=0}^{n-1} \max(0, \delta_{i}) + \max(0, \delta_{n})\,.
	\end{align*}
\end{proof}

We now begin a series of lemmas aimed at bounding the ranks of the generalized Jacobians of certain genus $0$ curves, culminating in Theorem~\ref{thm:rank-bounds}. 

Note that if $\bbP^1_{\OKS}\smallsetminus \Gamma_1$ and $\bbP^1_{\OKS}\smallsetminus \Gamma_2$ are isomorphic, then there is some fractional linear transformation $\phi: x \mapsto \frac{ax + b}{cx + d}$ with $a,b,c,d \in K$ such that $\phi(\Gamma_1(\overline{K})) = \Gamma_2(\overline{K})$. To bound the rank of the generalized Jacobians of $\bbP^1_{\OKS}\smallsetminus \Gamma_1$ we will need to understand the number of real and complex embeddings of the fields generated by the Galois orbits in $\Gamma_1$, as well as the splitting of the primes in $S$ in these fields.

\begin{lemma} \label{lem:helper-lemma}
	Suppose that $K' \subset K$ are number fields. Let $X$ be a genus $0$ curve defined over $K'$ equipped with an isomorphism $\phi: \bbP^1_{K} \to X_{K}$. Fix $P \in \bbP^1_{K}(\overline{\bbQ})$. Suppose also that $[K'(\phi(P)): K'] = [K(P): K]$.  Given $r \in \bbR_{>0}$, let
	\[
	U_r \colonequals \{x \in \bbC: |x| = r\}
	\]
	be the circle of radius $r$ in $\bbC \subset \bbP^1(\bbC)$.
	
	Let $I$ be a set of embeddings $\iota: K \to \bbC$. For each $\iota \in I$, fix some $r_{\iota} \in \bbR_{>0}$. Suppose that for all embeddings $\iota': K(P) \hookrightarrow \bbC$ extending $\iota$, we have $\iota'(P) \in U_{r_{\iota}}$. Set
	\[
	K_{\text{bad}}~\colonequals \{x \in \bbP^1(K): \iota(x) \in U_{r_{\iota}} \text{ for all } \iota \in I\}\,.
	\]
	If $K_{\text{bad}}$ is finite and either
	\begin{enumerate}
		\item $K_{\text{bad}} \neq \emptyset $\ or
		\item $X \cong \bbP^1_{K'} $
	\end{enumerate}
	then for some $\iota \in I$ there are at most $2$ real embeddings $K'(\phi_{\iota}(P)) \hookrightarrow \bbR$ extending $\iota|_{K'}$.
\end{lemma}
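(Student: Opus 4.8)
I would argue by contradiction. Suppose that for \emph{every} $\iota\in I$ there are at least three real embeddings $K'(\phi(P))\hookrightarrow\bbR$ extending $\iota|_{K'}$; I will deduce that $K_{\text{bad}}$ is infinite, contradicting the hypothesis. One may assume $I\neq\emptyset$ (otherwise $K_{\text{bad}}=\bbP^1(K)$ is infinite) and $n\colonequals[K(P):K]=[K'(\phi(P)):K']\geq 3$ (otherwise $K'(\phi(P))$ has at most two embeddings extending any embedding of $K'$ and the conclusion is immediate). The standing assumption forces $\iota|_{K'}$ to be a real embedding for each $\iota\in I$, so for such $\iota$ the conic $X_\iota^{\bbR}\colonequals X\otimes_{K',\,\iota|_{K'}}\bbR$ is defined, along with its base change $X_\iota\colonequals X\otimes_{K',\,\iota|_{K'}}\bbC\cong\bbP^1_{\bbC}$ and the induced isomorphism $\phi_\iota\colon\bbP^1_{\bbC}\to X_\iota$.

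The first step is to translate ``real embedding of $K'(\phi(P))$'' into geometry. Since $\phi$ is defined over $K$, the set of $K'$-conjugates of $\phi(P)$ equals $\{\phi(Q):Q\text{ a }K\text{-conjugate of }P\}$ --- one set contains the other, and both have $n$ elements by the degree hypothesis. That hypothesis also makes $K'(\phi(P))$ and $K$ linearly disjoint over $K'$, so for each $\iota\in I$ the embeddings of $K'(\phi(P))$ extending $\iota|_{K'}$ are in bijection with the embeddings of $K(P)$ extending $\iota$, i.e.\ with the $K$-conjugates $Q$ of $P$ realized in $\bbP^1(\bbC)$ via $\iota$ --- all of which lie on $U_{r_\iota}$ by hypothesis --- and under this bijection an embedding extending the real embedding $\iota|_{K'}$ is real precisely when $\phi_\iota(Q)$ lies in the real locus $X_\iota^{\bbR}(\bbR)\subset X_\iota(\bbC)$. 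Hence the number of real embeddings extending $\iota|_{K'}$ is at most $\#\bigl(U_{r_\iota}\cap\phi_\iota^{-1}(X_\iota^{\bbR}(\bbR))\bigr)$.

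The geometric heart of the argument comes next. The real locus $X_\iota^{\bbR}(\bbR)$ is either empty (if $X_\iota^{\bbR}$ has no $\bbR$-point) or, inside $X_\iota(\bbC)\cong\bbP^1(\bbC)$, a ``generalized circle'' --- it is the fixed locus of the anti-holomorphic involution defining the real structure, hence topologically a copy of $\bbP^1(\bbR)$ --- and applying the M\"obius transformation $\phi_\iota^{-1}$ keeps it a generalized circle or empty. Since two \emph{distinct} generalized circles in $\bbP^1(\bbC)$ meet in at most two points, the standing assumption forces $\phi_\iota^{-1}(X_\iota^{\bbR}(\bbR))=U_{r_\iota}$ for every $\iota\in I$; in particular each $X_\iota^{\bbR}$ then has a real point. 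I expect this step --- together with the linear-disjointness bookkeeping of the previous paragraph, which is what lets one match real embeddings of the \emph{field} $K'(\phi(P))$ with real points of the \emph{conics} $X_\iota^{\bbR}$ --- to be the main obstacle; once those identifications are in place, the ``two circles'' fact does the work.

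Finally I would descend $K_{\text{bad}}$ along $\phi$. Using $\phi_\iota(\iota(x))=\iota(\phi(x))$ (valid because $\phi$ is defined over $K$), the identity just obtained gives $K_{\text{bad}}=\phi^{-1}\bigl(\{y\in X(K):\iota(y)\in X_\iota^{\bbR}(\bbR)\text{ for all }\iota\in I\}\bigr)$. Comparing coordinates in an affine chart of $X$, the set on the right is precisely $X(K'')$, where $K''\colonequals\{x\in K:\iota(x)\in\bbR\text{ for all }\iota\in I\}$ is a subfield of $K$ containing $K'$. Now the remaining hypothesis is used: if $X\cong\bbP^1_{K'}$ then $X(K'')\supseteq X(K')\neq\emptyset$, while if $K_{\text{bad}}\neq\emptyset$ then $X(K'')=\phi(K_{\text{bad}})\neq\emptyset$. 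In either case the genus-$0$ curve $X_{K''}$ has a $K''$-point, hence $X_{K''}\cong\bbP^1_{K''}$ and $X(K'')$ is infinite; therefore $K_{\text{bad}}=\phi^{-1}(X(K''))$ is infinite, contradicting the finiteness hypothesis and completing the proof.
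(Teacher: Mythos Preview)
Your proof is correct and follows essentially the same contrapositive strategy as the paper: both argue that three real conjugates force $\phi_\iota$ to carry the circle $U_{r_\iota}$ bijectively onto the real locus of $X_\iota^{\bbR}$, and then exhibit infinitely many points of $K_{\text{bad}}$. Your final step, identifying $K_{\text{bad}}=\phi^{-1}(X(K''))$ for the intermediate field $K''=\{x\in K:\iota(x)\in\bbR\text{ for all }\iota\in I\}$, is a mild repackaging of the paper's construction of an auxiliary $K'$-morphism $\psi:X\to\bbP^1$ (the identity if $X\cong\bbP^1_{K'}$, else projection from $\phi(Q)$ for $Q\in K_{\text{bad}}$) together with the inclusion $(\psi\circ\phi)^{-1}(\bbP^1(\bbQ))\subset K_{\text{bad}}$.
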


\begin{proof}[Proof of Lemma~\ref{lem:helper-lemma}]
	
	Observe first that $[K'(\phi(P)): K'] = [K(P): K]$ implies that embeddings $K'(\phi(P)) \hookrightarrow \bbC$ extending $\iota|_{K'}$ correspond exactly to embeddings $\iota': K(P) \hookrightarrow \bbC$ extending $\iota$.
	
	We now prove the contrapositive. Suppose that for each $\iota \in I$ there are at least three $\iota': K(P) \to \bbC$ so that $\iota'(K'(\phi(P))) \subset \bbR$. 
	
	Then, each $\iota|_{K'}: K' \hookrightarrow \bbC$ is a real embedding. Given $\iota: K \hookrightarrow \bbC$, let $X_{\iota}$ be the base change of $X$ from $K'$ to $\bbR$ along $\iota$. Let $\phi_{\iota}$ denote the induced map $\bbP^1_{\bbC} \to X_{\iota, \bbC}$. 
	
	If $X \cong \bbP^1_{K'}$, let $\psi: X \to \bbP^1_{K'}$ be the identity map. The induced isomorphisms $\psi_{\iota}: X_{\iota} \to \bbP^1_{\bbR}$ and $\psi_{\iota}: X_{\iota,\bbC} \to \bbP^1_{\bbC}$ are again the identity.
	
	Otherwise, $X$ can be written as a conic in $\bbP^2_{K'}$. Choose some $Q \in K_{\text{bad}}$ and let $\psi: X_{K} \to \bbP^1_{K}$ be projection from $\phi(Q)$ to a line. Since $\iota(\phi(Q)) \in \bbR$ for all $\iota \in I$, there are again induced isomorphisms $\psi_{\iota}: X_{\iota} \to \bbP^1_{\bbR}$ and $\psi_{\iota}: X_{\iota,\bbC} \to \bbP^1_{\bbC}$ given by projection from $\iota(\phi(Q))$. 
	
	In either case, $\psi_{\iota, \bbC}$ maps $X_{\iota}(\bbR)$ isomorphically to the real axis in $\bbP^1(\bbC)$. 
	
	Now, by assumption, for any $\iota \in I$, we can choose distinct $\iota_{1}, \iota_{2}, \iota_{3}$ extending $\iota$ to $K(P)$ such that $\phi_{\iota}(\iota_{j}(P)) \in X_{\iota}(\bbR)$ for $j \in \{1,2,3\}$. Then, $(\psi_{\iota} \circ \phi_{\iota})(\iota_{j}(P))$ lies on the real axis in $\bbP^1(\bbC)$ for $j \in \{1, 2, 3\}$. Since any automorphism of $\bbP^1(\bbC)$ which maps three points on a circle to a (real) line induces a bijective map between the circle and the line, we see that the composition $\psi_{\iota} \circ \phi_{\iota}$ maps $U_{r_{\iota}}$ bijectively to the real axis in $\bbP^1(\bbC)$. Hence, $\phi_{\iota}$ maps $U_{r_{\iota}}$ bijectively to $X_{\iota}(\bbR)$.
	
	Now, consider the map $\psi \circ \phi : \bbP^1_{K} \to \bbP^1_{K}$. Since, $\bbP^1$ is definable over $\bbQ$, it makes sense to consider $\bbP^1(\bbQ)$ as a set inside $\bbP^1(K)$. Moreover, for each $\iota \in I$, the set $\bbP^1(\bbQ)$ is contained in the real axis of $\bbP^1(\bbC)$. In particular, for all $x \in \bbP^1(\bbQ)$ and all $\iota \in I$, we have 
	\[
	\iota((\psi \circ \phi)^{-1}(x)) \in U_{r_{\iota}}\,.
	\]
	Hence, 
	\[
	(\psi \circ \phi)^{-1}(\bbP^1(\bbQ)) \subset K_{\text{bad}}\,.
	\]
	so $K_{\text{bad}}$ is infinite. 
\end{proof}

Recall that a CM field is a totally complex field which is a degree $2$ extension of a totally real field. We state and prove a well-known fact characterizing fields containing CM subfields.

\begin{lemma}\label{lem:CM-subfield}
	Let $K$ be a number field and fix $\alpha \in K$. Then, $K$ contains a CM subfield if and only if the set
	\[
	C_{\alpha} \colonequals \{ x \in K : |\iota(x)| = |\iota(\alpha)| \text{ for all } \iota: K \hookrightarrow \bbC \}
	\]
	is infinite, or equivalently, if $\# C_{\alpha} \geq 3$\,.
\end{lemma}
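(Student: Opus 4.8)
The plan is to reduce the statement to the case $\alpha = 1$ and to study the subgroup
\[
\mathcal{U} \colonequals \{u \in K^{\times} : |\iota(u)| = 1 \text{ for all } \iota \colon K \hookrightarrow \bbC\} \subseteq K^{\times}\,.
\]
For $\alpha \in K^{\times}$ one has $\alpha \in C_{\alpha}$, and division by $\alpha$ is a bijection $C_{\alpha} \xrightarrow{\sim} \mathcal{U}$; so it suffices to prove the equivalence of: $K$ contains a CM subfield; $\mathcal{U}$ is infinite; $\# \mathcal{U} \geq 3$. (The degenerate case $\alpha = 0$, where $C_{0} = \{0\}$, does not arise in our application, so I assume $\alpha \in K^{\times}$.) A preliminary observation I would record is that every root of unity of $K$ lies in $\mathcal{U}$ while every torsion element of $K^{\times}$ is a root of unity, so $\mathcal{U}$ is infinite if and only if it contains an element of infinite order.

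For ``$K$ contains a CM subfield $\Rightarrow \mathcal{U}$ infinite'', I would take a CM subfield $F = F_{0}(\sqrt{-\delta}) \subseteq K$ with $F_{0}$ totally real and $\delta \in F_{0}$ totally positive, and use the standard fact that the nontrivial $c \in \Gal(F/F_{0})$ satisfies $\iota \circ c = \overline{(\cdot)} \circ \iota$ for every embedding $\iota$ of $F$. Fixing $\theta \in F \smallsetminus F_{0}$, the elements $\beta_{n} \colonequals (n + \theta)/(n + c(\theta))$ for $n \gg 0$ satisfy $|\iota(\beta_{n})| = |\iota(n+\theta)|/|\overline{\iota(n+\theta)}| = 1$, hence lie in $\mathcal{U}$, and a one-line computation shows $\beta_{n} = \beta_{m}$ forces $(n-m)(\theta - c(\theta)) = 0$, so the $\beta_{n}$ are pairwise distinct.

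The substantive direction is ``$\mathcal{U}$ contains an element $u$ of infinite order $\Rightarrow K$ contains a CM subfield''. The key point is that on $\mathcal{U}$ the map $u \mapsto u^{-1}$ agrees with complex conjugation in every archimedean place: for any embedding $\sigma$ of $F \colonequals \bbQ(u)$ one has $|\sigma(u)| = 1$, hence $\overline{\sigma(u)} = \sigma(u)^{-1} = \sigma(u^{-1})$. Therefore $\sigma(u + u^{-1}) = 2\,\mathrm{Re}\,\sigma(u) \in \bbR$ for all $\sigma$, so $F_{0} \colonequals \bbQ(u + u^{-1})$ is totally real. Since $u$ is a root of $t^{2} - (u+u^{-1})\,t + 1 \in F_{0}[t]$ we get $[F:F_{0}] \leq 2$, with equality because $u \in F_{0}$ would force $\sigma(u) \in \bbR \cap \{|z|=1\} = \{\pm 1\}$ for every $\sigma$ and hence $u = \pm 1$ (its minimal polynomial over $\bbQ$ would be $t \mp 1$), contradicting infinite order; the same reasoning shows no $\sigma(u)$ equals $\pm 1$, so $F$ is totally imaginary. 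Thus $F = \bbQ(u) \subseteq K$ is a CM subfield. I would stress that, since $u$ need not be an algebraic integer, one should not invoke Kronecker's theorem here — the infinite-order hypothesis is used directly.

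Finally I would assemble the equivalences. ``$\mathcal{U}$ infinite'' trivially gives ``$\#\mathcal{U} \geq 3$'' and, by the preceding paragraph, follows from ``$K$ has a CM subfield''. For the last implication I argue contrapositively: if $K$ has no CM subfield, then $\mathcal{U}$ has no element of infinite order, so $\mathcal{U} = \mu_{K}$; and $\mu_{K} \neq \{\pm 1\}$ would give $\bbQ(\zeta_{n}) \subseteq K$ for some $n \geq 3$, which is itself CM (totally imaginary of degree $2$ over the totally real $\bbQ(\zeta_{n} + \zeta_{n}^{-1})$), a contradiction; hence $\mathcal{U} = \{\pm 1\}$ and $\#C_{\alpha} = 2$. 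The only delicate inputs are the two standard facts flagged above (the intrinsic complex conjugation on a CM field, and that a field with no CM subfield contains no root of unity other than $\pm 1$); I do not expect a serious obstacle beyond these bookkeeping points.
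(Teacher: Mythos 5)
Your proof is correct and follows essentially the same route as the paper: after reducing to $\alpha=1$, both directions hinge on the same two constructions, namely a family of unit-circle elements $(n+\theta)/(n+c(\theta))$ built from the intrinsic complex conjugation of a CM subfield (the paper uses the equivalent form $\frac{n(\beta-\sigma(\beta))+1}{n(\beta-\sigma(\beta))-1}$), and, conversely, exhibiting $\bbQ(u)$ as a totally imaginary quadratic extension of the totally real $\bbQ(u+u^{-1})$. The only difference is your detour through infinite order and roots of unity, which is harmless but unnecessary: for any $u$ in your set $\mathcal{U}$ with $u\neq\pm 1$, injectivity of embeddings already gives $\iota(u)\notin\{\pm 1\}$, so the same argument shows $\bbQ(u)$ is CM without any case split, exactly as in the paper.
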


\begin{proof}[Proof of Lemma~\ref{lem:CM-subfield}]
	Dividing by $\alpha$ if necessary, it suffices to prove the claim for $\alpha = 1$. 
	
	If $ \# C_{1} \geq 3$ then there is some $\beta \neq \pm 1$ on the unit circle in every complex embedding. The same is true of $\beta^{-1}$, so $\bbQ(\beta)$ is a totally complex degree two extension of $\bbQ\left(\frac{\beta + \beta^{-1}}{2}\right)$\,.
	
	If $K$ contains a CM subfield, suppose $K'(\beta)$ is a CM subfield containing the totally real subfield $K'$. Let $\sigma \in \Gal(K'(\beta)/K')$ be the nontrivial element corresponding to complex conjugation. Then, $\beta - \sigma(\beta) \neq 0$ lies on the imaginary axis in every embedding $K \hookrightarrow \bbC$. Finally, the fractional linear transformation $x \mapsto \frac{x+1}{x-1}$ maps the imaginary axis to the unit circle in every embedding. Hence, 
	\[
	\left\{ \frac{n\beta - n\sigma(\beta) + 1}{n\beta - n\sigma(\beta) - 1} : n \in \bbQ \right\} \subset C_{1}
	\]
	so $C_{1}$ is infinite.
\end{proof}

\begin{lemma}\label{lem:odd-degree-field}
	Suppose that $[K:\bbQ]$ is odd. Fix an $\iota: K \hookrightarrow \bbC$ and an $r \in \bbR_{>0}$. Then,
	\[
	\# \{ x \in K : |\iota(x)| = r \} \leq 2 \,.
	\]
\end{lemma}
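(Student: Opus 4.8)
The plan is to argue by contradiction. Suppose there are three \emph{distinct} elements $x_1, x_2, x_3 \in K$ with $|\iota(x_j)| = r$ for $j = 1,2,3$; I will manufacture from them a degree-$2$ subextension of $K/\bbQ$, contradicting that $[K:\bbQ]$ is odd. Since $r > 0$, each $y_j := \iota(x_j)$ is a nonzero complex number on the circle $|z| = r$, so $\overline{y_j} = r^2/y_j$.

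The device is to consider the two elements
\[
w = \frac{x_1 - x_2}{x_1 - x_3}\,, \qquad u = w \cdot \frac{x_3}{x_2} = \frac{(x_1 - x_2)\,x_3}{(x_1 - x_3)\,x_2}
\]
of $K^\times$, which are well-defined and nonzero because the $x_j$ are distinct and nonzero. The one computation that needs to be carried out is the identity
\[
\overline{\iota(w)} = \iota(u)\,, \qquad \overline{\iota(u)} = \iota(w)\,,
\]
i.e. complex conjugation swaps $\iota(w)$ and $\iota(u)$. This is routine: substituting $\overline{y_j} = r^2/y_j$ into $\overline{\iota(w)} = (\overline{y_1}-\overline{y_2})/(\overline{y_1}-\overline{y_3})$ and clearing the common factor $r^2$ gives $(y_1-y_2)y_3/\bigl((y_1-y_3)y_2\bigr) = \iota(u)$, and the same manipulation applied to $\iota(u)$ returns $\iota(w)$. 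The factor $x_3/x_2$ in the definition of $u$ is chosen precisely so that these factors of $r^2$ cancel.

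Granting the identity, the conclusion is immediate. Because $x_2 \neq x_3$ we have $u \neq w$ in $K$, hence $\iota(w) \neq \iota(u) = \overline{\iota(w)}$, so $\iota(w) \notin \bbR$. On the other hand $w + u$ and $wu$ are elements of $K$ whose images under $\iota$ are fixed by complex conjugation, so $K' := \bbQ(w+u, wu) \subseteq K$ has $\iota(K') \subseteq \bbR$; and $w$ is a root of $T^2 - (w+u)T + wu \in K'[T]$, so $[K'(w):K'] \le 2$, with equality since $\iota(w) \notin \bbR$ forces $w \notin K'$. Thus $2[K':\bbQ] = [K'(w):\bbQ]$ divides $[K:\bbQ]$, contradicting that $[K:\bbQ]$ is odd. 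Hence no such triple exists, and $\#\{x \in K : |\iota(x)| = r\} \le 2$.

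I do not expect a genuine obstacle: the only step with any content is the conjugation identity for $w$ and $u$, which is a short symbol manipulation, and the rest is the standard ``two conjugate roots cut out a quadratic subfield'' argument. Alternatively one could route the proof through a Möbius transformation carrying $y_1, y_2, y_3$ to $0, 1, \infty$ and the corresponding reflection across the circle $|z| = r$, in the spirit of the proof of Lemma~\ref{lem:helper-lemma}; but the explicit elements $w, u$ give a shorter self-contained argument.
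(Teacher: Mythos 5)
Your proof is correct and takes essentially the same route as the paper: both argue by contradiction, using the relation $\overline{z} = r^2/z$ on the circle to produce an element of $K$ with non-real image whose complex conjugate is again the image of an element of $K$, thereby cutting out a quadratic subextension of $K/\bbQ$ and contradicting odd degree. The paper's choice of element is a bit leaner—from three points it picks $\alpha \neq \pm\beta$ and sets $x = \alpha/\beta$, so that $|\iota(x)| = 1$, $\iota(x) \notin \bbR$, and $[\bbQ(x):\bbQ(x+x^{-1})] = 2$—but this differs from your $w,u$ device only cosmetically.
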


\begin{proof}[Proof of Lemma~\ref{lem:odd-degree-field}]
	Suppose not. Then there are $\alpha, \beta \in K$ with $\iota(\alpha) = \iota(\beta) = r$ and $\alpha \neq \pm \beta$. Set $x = \alpha/\beta$, so $|\iota(x)| = 1$. Then, $\iota(x) \notin \bbR$ but $\iota(x) + \iota(x)^{-1} \in \bbR$. Hence, $[\bbQ(x):\bbQ(x + x^{-1})] = 2$. But then $[K:\bbQ] = [K:\bbQ(x)] \cdot  [\bbQ(x):\bbQ]$ is even.
\end{proof}

\begin{lemma}\label{lem:qth-root-of-unity}
	Suppose that $K' \subset K$ are number fields. Let $q$ be a prime. Let $X$ be a genus $0$ curve defined over $K'$ equipped with an isomorphism $\phi: \bbP^1_{K} \to X_{K}$. 
	Suppose that the set $Z_{q} \colonequals \left\{\zeta_{q}^{j}: j \in \{1, \dots, q-1\} \right\}$ of primitive $q$th roots of unity defines an irreducible degree $q-1$ point of $\bbP^1_{K}$ and that $\{\phi(x): x \in Z_{q}\}$ descends to an irreducible degree $q-1$ point of $X$.  
		Then,
		\begin{enumerate} [label=\alph*)]
			\item If $[K:\bbQ]$ is odd, 
			\[
			r_1(K'(\phi(\zeta_q))) \leq 2 r_1(K') \,.
			\]
			\item If $K$ does not contain a CM subfield, 
			\[
			r_1(K'(\phi(\zeta_q))) \leq [K':\bbQ](q-1)  - (q - 3)\,.
			\]
		\end{enumerate}
\end{lemma}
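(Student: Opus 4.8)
The plan is to deduce both parts from Lemma~\ref{lem:helper-lemma}, applied with $P = \zeta_{q}$ and with $r_{\iota} = 1$ for every embedding $\iota$ in play, so that each circle $U_{r_{\iota}}$ is the complex unit circle $U_{1}$. First I would record the inputs common to both parts. The hypothesis $[K'(\phi(P)):K'] = [K(P):K]$ of Lemma~\ref{lem:helper-lemma} holds with both sides equal to $q-1$: the right-hand side because $Z_{q}$ is assumed to be an irreducible degree $q-1$ point of $\bbP^1_{K}$, and the left-hand side because $\{\phi(x) : x \in Z_{q}\}$ is assumed to descend to an irreducible degree $q-1$ point of $X$. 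The remaining circle hypothesis is automatic: any embedding of $K(\zeta_{q})$ sends $\zeta_{q}$ to a primitive $q$th root of unity, which lies on $U_{1}$. Finally, for any set $I$ of embeddings one always has $1 \in K_{\text{bad}}$ (since $|\iota(1)| = 1$), so hypothesis (1) of Lemma~\ref{lem:helper-lemma} holds as soon as $K_{\text{bad}}$ is finite; thus in each part the only thing left to verify is finiteness of $K_{\text{bad}}$. Throughout I will use the elementary observations that a real embedding of $K'(\phi(\zeta_{q}))$ restricts to a real embedding of $K'$, and that over any real embedding of $K'$ there are at most $[K'(\phi(\zeta_{q})):K'] = q-1$ real embeddings of $K'(\phi(\zeta_{q}))$.

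For part (a), I would fix a real embedding $\tau \colon K' \hookrightarrow \bbR$, extend it to an embedding $\iota \colon K \hookrightarrow \bbC$, and apply Lemma~\ref{lem:helper-lemma} with $I = \{\iota\}$. Because $[K:\bbQ]$ is odd, Lemma~\ref{lem:odd-degree-field} gives $\#\{x \in K : |\iota(x)| = 1\} \leq 2$, so $K_{\text{bad}} \subseteq \{x \in K : |\iota(x)| = 1\}$ is finite. Lemma~\ref{lem:helper-lemma} then shows that at most two real embeddings of $K'(\phi(\zeta_{q}))$ extend $\tau$. Summing this bound over the $r_{1}(K')$ real embeddings $\tau$ of $K'$ gives $r_{1}(K'(\phi(\zeta_{q}))) \leq 2 r_{1}(K')$.

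For part (b), I would first dispose of the case where $K'$ is not totally real by a trivial count: $r_{1}(K'(\phi(\zeta_{q}))) \leq (q-1) r_{1}(K') \leq (q-1)([K':\bbQ]-1) = [K':\bbQ](q-1) - (q-1) \leq [K':\bbQ](q-1) - (q-3)$. When $K'$ is totally real, I would apply Lemma~\ref{lem:helper-lemma} with $I$ the set of \emph{all} embeddings $\iota \colon K \hookrightarrow \bbC$. Then $K_{\text{bad}}$ coincides with the set $C_{1}$ of Lemma~\ref{lem:CM-subfield}, which is finite --- in fact equal to $\{\pm 1\}$ --- precisely because $K$ contains no CM subfield. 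Lemma~\ref{lem:helper-lemma} supplies one embedding $\iota_{0}$ of $K$ such that at most two real embeddings of $K'(\phi(\zeta_{q}))$ extend $\tau_{0} \colonequals \iota_{0}|_{K'}$, and $\tau_{0}$ is one of the $[K':\bbQ]$ real embeddings of $K'$ since $K'$ is totally real. Estimating the contribution of the remaining $[K':\bbQ] - 1$ real embeddings of $K'$ by $q-1$ each yields
\[
r_{1}(K'(\phi(\zeta_{q}))) \leq 2 + (q-1)([K':\bbQ] - 1) = [K':\bbQ](q-1) - (q-3)\,.
\]

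The essential content is already contained in Lemma~\ref{lem:helper-lemma} (whose proof supplies the single geometric ingredient, namely that an automorphism of $\bbP^1(\bbC)$ taking three points of a circle onto a line takes the entire circle onto that line), so I do not anticipate a substantive obstacle. The one point needing care is the choice of the index set $I$: the output of Lemma~\ref{lem:helper-lemma} is only informative when the restrictions $\iota|_{K'}$ are themselves real embeddings of $K'$, whereas shrinking $I$ risks making $K_{\text{bad}}$ infinite. This tension is resolved for part (a) by taking $I$ to be a single embedding (finiteness of $K_{\text{bad}}$ then coming for free from the odd-degree hypothesis via Lemma~\ref{lem:odd-degree-field}) and for part (b) by first reducing to the case in which $K'$ is totally real, so that $I$ may be taken as large as possible.
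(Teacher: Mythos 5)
Your proposal is correct and follows essentially the same route as the paper: apply Lemma~\ref{lem:helper-lemma} with $P=\zeta_q$ and $r_\iota=1$, using a single embedding together with Lemma~\ref{lem:odd-degree-field} for part (a), and all embeddings together with Lemma~\ref{lem:CM-subfield} for part (b), with $1\in K_{\text{bad}}$ securing hypothesis (1). Your extra bookkeeping (summing over the real embeddings of $K'$ in (a), and the case split on whether $K'$ is totally real in (b) so that the distinguished embedding restricts to a real place) only spells out details the paper leaves implicit, and the counts all check out.
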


\begin{proof}[Proof of Lemma~\ref{lem:qth-root-of-unity}]
	The assumption that we have irreducible points implies that
	\begin{align*}
	[K(\zeta_q): K] = [K'(\phi(\zeta_q)): K'] = q-1\,.
	\end{align*}
	
	(a): Suppose $[K: \bbQ]$ is odd. Fix an $\iota: K \hookrightarrow \bbC$, set $I = \{\iota\}$, and set $r_{\iota} = 1$. Define $K_{\text{bad}}$ as in Lemma~\ref{lem:helper-lemma}. By Lemma~\ref{lem:odd-degree-field}, $\# K_{\text{bad}} \leq 2$.
	
	We have $\pm 1 \in K_{\text{bad}}$, so $X \cong \bbP^1_{K'}$. Then, Lemma~\ref{lem:helper-lemma} says there are at most $2$ real embeddings of $K'(\phi(\zeta_{q}))$ extending $\iota|_{K'}$. Of course, if $\iota|_{K'}$ is not real, there are no such embeddings. We conclude
	\begin{align*}
	r_1(K'(\phi(\zeta_q))) & \leq 2 r_1(K') \,.
	\end{align*}
	
	(b): Suppose $K$ does not contain a CM subfield. Let $I$ be the set of all embeddings $\iota: K \hookrightarrow \bbC$ and set $r_{\iota} = 1$ for all $\iota$. Define $K_{\text{bad}}$ as in Lemma~\ref{lem:helper-lemma}. By Lemma~\ref{lem:CM-subfield}, $\# K_{\text{bad}} \leq 2$.
	
	We again have $\pm 1 \in K_{\text{bad}}$. By Lemma~\ref{lem:helper-lemma},
	\begin{align*}
	r_1(K'(\phi(\zeta_q))) & \leq ([K':\bbQ] - 1)(q-1) + 2 = [K':\bbQ](q-1) - (q-3)  \,, \\
	r_1(K'(\phi(\sqrt[q]{\alpha}))) & \leq ([K':\bbQ] - 1)q + 2 = [K':\bbQ]q  - (q-2)\,.
	\end{align*}
\end{proof}

\begin{theorem}\label{thm:rank-bounds}
	Let $K$ be a number field which does not contain a CM subfield and let $S$ be a finite set of finite places. For any $\varepsilon > 0$, there exists $C \in \bbR$ such that for all primes $q > C$ the following holds: 
	
	Let $K'$ be any subfield of $K$. Let $S'$ be the set of places of $K'$ lying under $S$ and let $R' = \cO_{K',S'}$ be the ring of $S'$-integers of $K'$. Choose any $\alpha \in \OKS^{\times}$ which is a $q$th power in $\OKS$ and let $\Gamma$ be the divisor on $\bbP^1_{R}$ given by spreading out the divisor $\{ x \in \overline{R}\smallsetminus R : x^q - \alpha = 0 \}$ on $\bbP^1_{K}$.
	
	Let $X/R'$ be any curve such that there is an isomorphism $\phi : \mathbb P^1_{R} \smallsetminus \Gamma \to X_{R}\,.$ Then,
	\[
	\rank J_{X}(R') \leq \left( [K':\bbQ] - \frac{1}{2} + \varepsilon \right) (q-2)\,.
	\]	
\end{theorem}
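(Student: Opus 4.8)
The plan is to bound $\rank J_X(\OKSp)$ directly from the character-theoretic rank formula of Lemma~\ref{lem:rank+dim-genus-0-jac}, estimating the archimedean contribution by means of Lemma~\ref{lem:qth-root-of-unity}(b) and showing that, because $q$ is large, the contribution of the places of $K'$ above $S_0$ is negligible. First I would fix $\varepsilon > 0$ and restrict to $q$ large enough (depending only on $K$ and $S$) that $K \cap \bbQ(\zeta_q) = \bbQ$ and that $q$ exceeds every rational prime lying below $S$; only finitely many $q$ are excluded. Writing $\alpha = \beta^{q}$ with $\beta \in \OKS^{\times} \subset K$, the geometric support of $\Gamma$ is then $\{\beta\zeta_{q}^{j} : 1 \leq j \leq q-1\}$, a single $\Gal(\overline K/K)$-orbit of size $q-1$, hence a fortiori a single $\Gal(\overline K/K')$-orbit; so the modulus $\fkm$ of the generic fibre $X_{K'}$ is one closed point $P$, its residue field $L' \colonequals \kappa(P)$ has degree $q-1$ over $K'$, and $\dim J_X = q-2$. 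Applying Lemma~\ref{lem:rank+dim-genus-0-jac} to $X/R'$, the ``global'' term $\#(\Gal(\overline K/K')\backslash\Gamma(\overline K)) - 1$ vanishes and
\[
\rank J_X(\OKSp) \;=\; \sum_{\fkp \in S'}\bigl(g_{\fkp}-1\bigr) \;+\; \sum_{\fkp \mid \infty}\bigl(\#(G_{\fkp}\backslash\Gamma(\overline K))-1\bigr),
\]
where $g_{\fkp}$ denotes the number of primes of $L'$ above $\fkp$ and the second sum runs over the archimedean places of $K'$.

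Next I would dispose of the finite places. Base-changing $P$ to $K$ shows $L'\otimes_{K'}K$ is a field isomorphic to $K(\zeta_q)$; fixing compatible embeddings gives $L' \subseteq K(\zeta_q)$ and $\bbQ(\zeta_q)\subseteq L'K = K(\zeta_q)$ with $[L'K:L'] = [K:K']$. For $q$ as above, every $p_0 \in S_0$ is unramified in $\bbQ(\zeta_q)$ with residue degree the multiplicative order $\mathrm{ord}_q(p_0)$ of $p_0$ modulo $q$, and $\mathrm{ord}_q(p_0) \geq \log_{p_0} q$ (from $p_0^{\mathrm{ord}_q(p_0)} \equiv 1 \pmod q$, so $p_0^{\mathrm{ord}_q(p_0)}\geq q+1$). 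Multiplicativity of residue degrees in the towers $\bbQ(\zeta_q)\subseteq L'K$ and $K'\subseteq L'\subseteq L'K$ then forces every prime of $L'$ above any $p_0 \in S_0$ to have residue degree at least $(\log_{p_0} q)/[K:\bbQ]$, so the number of primes of $L'$ above $S_0$ — which dominates $\sum_{\fkp\in S'}(g_{\fkp}-1)$ — is at most $c_1\,q/\log q$ for some $c_1$ depending only on $K$ and $S$.

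For the archimedean places I would compute the sum exactly. Separating the complex places of $K'$ (each contributing $q-2$) from the real ones, and letting $a_{\fkp}$ and $b_{\fkp}$ count the real and complex places of $L'$ above a real place $\fkp$ of $K'$, the relations $\sum a_{\fkp} = r_1(L')$ and $\sum(a_{\fkp}+2b_{\fkp}) = r_1(K')(q-1)$ yield
\[
\sum_{\fkp\mid\infty}\bigl(\#(G_{\fkp}\backslash\Gamma(\overline K))-1\bigr) \;=\; r_2(K')(q-2) + \tfrac12\bigl(r_1(L') + r_1(K')(q-3)\bigr).
\]
Here is where I would invoke the hypothesis that $K$ has no CM subfield: composing the given isomorphism with the automorphism $x\mapsto\beta x$ identifies the punctures of $X$ with the primitive $q$th roots of unity $Z_q$, which (for $q$ as above) form an irreducible degree $q-1$ point of $\bbP^1_K$, so Lemma~\ref{lem:qth-root-of-unity}(b) applies and gives $r_1(L') \leq [K':\bbQ](q-1) - (q-3)$. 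Substituting this and using $[K':\bbQ] = r_1(K') + 2r_2(K')$, the archimedean sum is at most $[K':\bbQ](q-2) + r_2(K') - \tfrac{q-3}{2}$.

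Combining the two estimates gives
\[
\rank J_X(\OKSp) \;\leq\; \bigl([K':\bbQ]-\tfrac12\bigr)(q-2) + \bigl(\tfrac12 + r_2(K') + c_1\,q/\log q\bigr),
\]
and since $r_2(K')\leq r_2(K)$ is bounded while $q/\log q = o(q)$ as $q\to\infty$, the error term is at most $\varepsilon(q-2)$ once $q$ is large enough in terms of $K$, $S$, $\varepsilon$ alone — in particular uniformly in $\alpha$ and in the finitely many subfields $K'$ — which is the claim. The substantive input, the bound $r_1(L') \leq [K':\bbQ](q-1)-(q-3)$, is exactly Lemma~\ref{lem:qth-root-of-unity}(b), which is already proved; within the present argument the delicate points are the exact bookkeeping of the archimedean term in terms of $r_1(L')$ and the residue-degree estimate at the primes of $S'$, the latter hinging on the fact that $L'$ sits inside $K(\zeta_q)$, so that the fixed primes below $S$ acquire residue degree growing with $q$ and their total contribution is $o(q)$ rather than proportional to $q$.
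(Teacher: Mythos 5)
Your proof is correct and follows essentially the same route as the paper's: reduce the punctures to (a scaling of) the primitive $q$th roots of unity forming an irreducible degree $q-1$ point over $K'$, bound the archimedean contribution to the rank formula of Lemma~\ref{lem:rank+dim-genus-0-jac} via Lemma~\ref{lem:qth-root-of-unity}(b), and show the contribution of primes above $S_0$ is $o(q)$ because residue degrees in the cyclotomic layer grow like $\log q$. The only differences are cosmetic bookkeeping (the paper first normalizes $\alpha=1$ and counts primes of $K(\zeta_q)$ above $S$ rather than estimating residue degrees in $L'$ through the tower $L'\subseteq K(\zeta_q)$), so no further changes are needed.
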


\begin{proof}[Proof of Theorem~\ref{thm:rank-bounds}]
	
	It is enough to consider the case $\alpha = 1$.
	
	Note that $\mathbb P^1_{R} \smallsetminus \Gamma \cong \mathbb P^1_{R} \smallsetminus \{\zeta_{q}^{j}: j \in \{1, \dots, q-1\} \}\,.$ By choosing $C$ sufficiently large, we may assume that $\bbQ(\zeta_{q})$ is linearly disjoint from $K$. Then,
	\[
	[K(\zeta_{q}):K] = [K(\phi(\zeta_{q})):K] \leq [K'(\phi(\zeta_{q})):K']\,,
	\]
	so the punctures of $X_{K'}$ form an irreducible degree $q-1$ point on $\overline{X_{K'}}$.
	
	Using Lemma~\ref{lem:rank+dim-genus-0-jac} or equation \eqref{eqn:jacobian-punctured-P1-rank} to bound the rank of $J_{X}(R')$, it will suffice to bound the number of infinite places of $K'(\phi(\zeta_q))$ and the number of finite places of $K'(\phi(\zeta_q))$ above $S'$.
	
	We start with the infinite places. By 1b of Lemma~\ref{lem:qth-root-of-unity}, 
	\[
	r_1(K'(\phi(\zeta_q))) \leq [K':\bbQ](q-1)  - (q - 3)\,.
	\]
	Also, 
	\[
	r_1(K'(\phi(\zeta_q))) + 2 r_{2}(K'(\phi(\zeta_q))) = [K':\bbQ](q-1)\,,
	\]
	so
	\begin{align}\label{eqn:infinite-places-1}
	r_1(K'(\phi(\zeta_q))) + r_{2}(K'(\phi(\zeta_q))) \leq [K':\bbQ](q-1) - \frac{q - 3}{2}\,.
	\end{align}
	
	To address the finite places, we note that the total number of places of $K'(\phi(\zeta_q))$ above $S'$ is at most the total number of places of $K(\zeta_{q})$ above $S$. 
	
	Fix a prime $\fkp \in S$. Let $\kappa_{\fkp}$ be the residue field of $K_{\fkp}$. Let $a_{\fkp} = [\kappa_{\fkp}(\zeta_q):\kappa_{\fkp}]$. 
	Then, $a_{\fkp}$ is the order of $\# \kappa_{\fkp}$ in $(\bbZ/q\bbZ)^\times$, so 
	\[
	a_{\fkp} \geq \left \lceil \frac{\ln (q - 1)}{\ln(\#\kappa_{\fkp})} \right \rceil\,.
	\]
	Also, 
	\[
	\#\{\text{primes } \fkP \text{ of } K(\zeta_{q}) \text{ above } \fkp\} = \frac{q-1}{a_{\fkp}} \leq \frac{q-1}{\left \lceil \frac{\ln (q - 1)}{\ln(\#\kappa_{\fkp})} \right \rceil}\,.
	\]
	The set $S$ is finite and the denominators shrink as $q$ grows, so there is some $C$ (depending only on $\varepsilon, K$, and $S$) such that for $q > C$ we have
	\begin{align}\label{eqn:finite-prime-bound-1}
	\sum_{\fkp \in S} \#\{\text{primes } \fkP \text{ of } K'(\phi(\zeta_{p})) \text{ above } \fkp\} \leq \frac{\varepsilon}{2} (q-1)\,.
	\end{align}
	Combining \eqref{eqn:infinite-places-1} and \eqref{eqn:finite-prime-bound-1} in \eqref{eqn:jacobian-punctured-P1-rank} gives  
	\begin{align*}
	\rank J_{X}(R') \leq \left( [K':\bbQ] - \frac{1}{2} + \frac{\varepsilon}{2} \right) (q-1) + \frac{3}{2}\,.
	\end{align*}
	
	Increasing $C$ if necessary, we may assume $\varepsilon q/2$ is larger than any given constant. We find
	\begin{align*}
	\rank J_{X}(R') \leq \left( [K':\bbQ] - \frac{1}{2} + \varepsilon \right) (q-2)\,.
	\end{align*}
\end{proof}

\begin{lemma}\label{lem:covers-rank}
	Let $C_1, C_2/K$ be genus $0$ curves equipped with reduced effective divisors $\fkm_1, \fkm_2$. Let $X_1 \colonequals \overline{X_{1}} \smallsetminus \Gamma_1$ and $X_2 \colonequals \overline{X_{2}} \smallsetminus \Gamma_2$ be regular models of $(C_1,\fkm_{1})$ and $(C_2, \fkm_{2})$, respectively. Let $J_{1}$ and $J_{2}$ be the generalized Jacobians of $X_{1}$ and $X_{2}$. Let $G = \Gal(\overline{K}/K)$. 
	
	Suppose we are given a morphism $\varphi: X_1 \to X_{2}$ such that the induced morphism $\varphi: C_1 \to C_2$ satisfies $\varphi(\Gamma_1(\overline{K})) = \Gamma_2(\overline{K})$. 
	Suppose also that $\#(G\backslash \Gamma_{2}(\overline{K})) = 1$ and set 
	$\delta = \#  \Gamma_{1}(\overline{K}) /\# \Gamma_{2}(\overline{K})$.	Set 
	\[
	\Delta \colonequals [K:\bbQ] \cdot \# \Gamma_1(\overline{K}) - 
	(\rank J_{1}(\OKS) + \rank \bbG_{m}(\OKS) + 1) \,.
	\]
	
	Then
	\begin{align}\label{eqn:no-full-prym-result}
	[K:\bbQ] (\dim J_1 - \dim J_{2})  - (\rank J_{1}(\OKS) - \rank J_{2}(\OKS)) \geq \left(\frac{\delta-1}{\delta}\right) \Delta\,.
	\end{align}
\end{lemma}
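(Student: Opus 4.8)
The plan is to reduce \eqref{eqn:no-full-prym-result} to a purely combinatorial inequality about Galois orbits on the sets of punctures, using the rank and dimension formulas of Lemma~\ref{lem:rank+dim-genus-0-jac}, and then to establish that inequality from the fact that $\varphi$ is defined over $K$.

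\textbf{Set-up and reduction.} Write $d = [K:\bbQ]$, $n_i = \#\Gamma_i(\overline{K})$, $g_1 = \#(G\backslash\Gamma_1(\overline{K}))$, and for each place $\fkp \in S \cup \Sigma_\infty$ put $o_{i,\fkp} = \#(G_\fkp\backslash\Gamma_i(\overline{K}))$. The hypotheses give $\#(G\backslash\Gamma_2(\overline{K})) = 1$ and $\delta = n_1/n_2$. By Lemma~\ref{lem:rank+dim-genus-0-jac}, $\dim J_i = n_i - 1$, $\rank J_1(\OKS) = -(g_1 - 1) + \sum_{\fkp\in S\cup\Sigma_\infty}(o_{1,\fkp} - 1)$, and $\rank J_2(\OKS) = \sum_{\fkp\in S\cup\Sigma_\infty}(o_{2,\fkp} - 1)$, the global-orbit contribution to $\rank J_2$ being zero. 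Moreover $\rank \bbG_{m}(\OKS) + 1 = r_1(K) + r_2(K) + \#S = \#(S \cup \Sigma_\infty)$. Substituting these expressions into the two sides of \eqref{eqn:no-full-prym-result}, clearing the denominator $\delta$ by multiplying through by $n_1 > 0$, and cancelling the terms common to both sides, \eqref{eqn:no-full-prym-result} becomes equivalent to
\[
\delta \sum_{\fkp\in S\cup\Sigma_\infty} o_{2,\fkp} \ \geq\ \sum_{\fkp\in S\cup\Sigma_\infty} o_{1,\fkp} + (1 - g_1)\,.
\]
Since $g_1 \geq 1$, the term $1 - g_1$ is nonpositive, so it suffices to prove the termwise bound $o_{1,\fkp} \leq \delta\, o_{2,\fkp}$ for each $\fkp \in S\cup\Sigma_\infty$ and sum.

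\textbf{The key step.} This is where the hypothesis $\#(G\backslash\Gamma_2(\overline{K})) = 1$ enters. The morphism $\varphi$ restricts to a $G$-equivariant surjection $\varphi\colon \Gamma_1(\overline{K}) \to \Gamma_2(\overline{K})$; since $\varphi^{-1}(\sigma Q) = \sigma\,\varphi^{-1}(Q)$ for all $\sigma \in G$ while $\Gamma_2(\overline{K})$ is a single $G$-orbit, all fibers of $\varphi|_{\Gamma_1(\overline{K})}$ have the same cardinality, which a count forces to be $n_1/n_2 = \delta$ (so in particular $\delta \in \bbZ$). Fix $\fkp \in S \cup \Sigma_\infty$. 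Then $\varphi$ is $G_\fkp$-equivariant and descends to a surjection from $G_\fkp\backslash\Gamma_1(\overline{K})$ onto $G_\fkp\backslash\Gamma_2(\overline{K})$. For a $G_\fkp$-orbit $\omega \subseteq \Gamma_2(\overline{K})$, choosing $Q \in \omega$ exhibits $\varphi^{-1}(\omega)$ as the $G_\fkp$-set induced from the $\Stab_{G_\fkp}(Q)$-set $\varphi^{-1}(Q)$, which has $\delta$ elements; hence the number of $G_\fkp$-orbits lying above $\omega$ equals the number of $\Stab_{G_\fkp}(Q)$-orbits on $\varphi^{-1}(Q)$, which is at most $\delta$. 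Summing over $\omega$ yields $o_{1,\fkp} \leq \delta\, o_{2,\fkp}$, which completes the proof by the reduction above.

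\textbf{Main obstacle.} The one delicate point is that $\varphi\colon \bbP^1 \to \bbP^1$ may be ramified, so its fibers over arbitrary points need not be equidimensional; the hypothesis that the punctures of $C_2$ form a single Galois orbit is exactly what makes the fibers over $\Gamma_2(\overline{K})$ equidimensional, and making this — together with the induced-$G_\fkp$-set description of $\varphi^{-1}(\omega)$ — precise is the step that needs the most care. Everything else is the rank formula of Lemma~\ref{lem:rank+dim-genus-0-jac} and elementary manipulation of orbit counts.
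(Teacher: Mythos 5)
Your proof is correct and takes essentially the same route as the paper: both arguments rest on the rank/dimension formulas of Lemma~\ref{lem:rank+dim-genus-0-jac} together with the orbit-counting bound $\#(G_{\fkp}\backslash\Gamma_1(\overline{K})) \le \delta\,\#(G_{\fkp}\backslash\Gamma_2(\overline{K}))$ (forced by the fact that, since $\Gamma_2(\overline{K})$ is a single $G$-orbit, every fiber of $\varphi$ over it has exactly $\delta$ points), which the paper asserts with a brief ``similarly'' and you justify more carefully via induced $G_{\fkp}$-sets. The only difference is bookkeeping: you first reduce \eqref{eqn:no-full-prym-result} to the combinatorial inequality $\delta\sum_{\fkp} o_{2,\fkp} \ge \sum_{\fkp} o_{1,\fkp} + (1-g_1)$ and drop the nonpositive term, while the paper subtracts $\delta$ times one rank identity from the other and replaces the global orbit count by $1$ — the same estimate in a different order.
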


\begin{proof}[Proof of Lemma~\ref{lem:covers-rank}]
	Let $n = \#(G\backslash \Gamma_1(\overline{K}))$. Since $\#(G\backslash \Gamma_{2}(\overline{K})) = 1$, every point in $\# \Gamma_{2}(\overline{K})$ has exactly $\delta$ preimages in $\Gamma_{1}(\overline{K})$. Each orbit of $G\smallsetminus \Gamma_1(\overline{K})$ contains at least one preimage of each point of $\Gamma_2(\overline{K})$, so $n \leq \delta$. Similarly, for each $\fkp \in S \cup \Sigma_{\infty}$, we have $\#(G_{\fkp} \backslash \Gamma_1(\overline{K})) \leq \delta \#(G_{\fkp} \backslash \Gamma_2(\overline{K}))$. Adding $\rank \bbG_{m}(\OKS) = -1 + \sum_{\fkp \in S \cup \Sigma_{\infty}} 1$ to the formula from Lemma~\ref{lem:rank+dim-genus-0-jac} gives
	\begin{align}\label{eqn:no-full-prym-1}
	\rank J_1(\OKS) + \rank \bbG_{m}(\OKS) + n  & = \sum_{\fkp \in S \cup \Sigma_{\infty}} \#(G_{\fkp} \backslash \Gamma_1(\overline{K}))\,,  \text{ and }\\ \label{eqn:no-full-prym-2}
	\rank J_2(\OKS) + \rank \bbG_{m}(\OKS) + 1 & = \sum_{\fkp \in S \cup \Sigma_{\infty}} \#(G_{\fkp} \backslash \Gamma_2(\overline{K}))\,.
	\end{align}
	Subtracting $\delta$ times \eqref{eqn:no-full-prym-2} from \eqref{eqn:no-full-prym-1} gives
	\[
	(\delta - (\delta - 1)) \rank J_1(\OKS) - \delta  \rank J_2(\OKS) - (\delta - 1) \rank \bbG_{m}(\OKS)  - (\delta - n)  \leq 0\,.
	\]
	Since $n \geq 1$, the inequality is true after replacing $n$ with $1$. Rearranging gives
	\begin{align}\label{eqn:no-full-prym-3}
	\rank J_{1}(\OKS) - \rank J_{2}(\OKS) \leq \frac{\delta - 1}{\delta} \left(\rank J_{1}(\OKS) + \rank \bbG_{m}(\OKS) + 1\right)\,.
	\end{align}
	We also compute
	\begin{align}\label{eqn:no-full-prym-4}
	\dim J_1 - \dim J_2 = (\#\Gamma_1(\overline{K})-1) - (\#\Gamma_2(\overline{K})-1) = \frac{\delta-1}{\delta} \#\Gamma_1(\overline{K})\,.
	\end{align}
	Subtracting \eqref{eqn:no-full-prym-3} from $[K:\bbQ]$ times \eqref{eqn:no-full-prym-4}, we conclude that \eqref{eqn:no-full-prym-result} holds.
\end{proof}

After unpacking some definitions, Theorem~\ref{thm:main-theorem} is almost a corollary of Theorem~\ref{thm:rank-bounds} and Lemma~\ref{lem:covers-rank}.

\begin{proof}[Proof of Theorem~\ref{thm:main-theorem}]
	Suppose that $T$ is an $n$-BCP torus for $X_{\alpha,q}$. Choose $R_{i}', X_{i},$ and $T_{i}$ and define $\delta_{i}$ as in Lemma~\ref{lem:rank-deficiency}.
	
	If $n = 0$ so that $(X_{\alpha,q}, T) = (X_0, T_{0})$ or if $n = 1$ and $(X_{\alpha,q}, T)$ is a BC-successor of $(X_0, T_0)$ for $X_{0}/R_0'$ then in the notation of Theorem~\ref{thm:rank-bounds}, we have $R' = R_{0}'$ and $T_{0} = \Res_{R_{0}'/R_{0}} J_{X_{\alpha,q}}$. Applying Theorem \ref{thm:rank-bounds}, we have
	\begin{align*}
	\rank T_{0}(R_{0}) & \leq \left([K_0': \bbQ] - \frac{1}{2} + \varepsilon \right) (q-2)\,, \\
	\dim T_{0} & \geq [K_0':\bbQ](q-2)\,.
	\end{align*}
	Choosing any $\varepsilon < 1/2$ and $q$ sufficiently large, we may arrange that
	\[
	\sum_{i=0}^{n} \max(0, \delta_{i}) \geq \delta_{0} \geq \left(\frac{1}{2} - \varepsilon\right)(q-2) \geq [K_0':\bbQ] \geq \dim(T \cap j(\Res_{R/R_{0}} X))\,.
	\]
	Thus, $T$ is not a BCP obstruction for RoS Chabauty applied to $X_{\alpha,q}$.
	
	Otherwise, we may write $(X_{\alpha,q},T)$ as a BC-successor of some  $(X_{n-1}, T_{n-1})$ (possibly equal to $(X_{\alpha,q},T)$) which is a P-successor of some $(X_{n-2}, T_{n-2})$ (and where $X_{n-1} \to X_{n-2}$ is not an isomorphism.) For ease of notation, say $X_{n-1}$ and $X_{n-2}$ are defined over $R'$, the ring of $S$-integers in $K'$.
	
	By Theorem~\ref{thm:rank-bounds}, taking $\varepsilon <  1/4$ and $q$ sufficiently large, we may assume
	\[
	\rank J_{n-1}(R') \leq \left([K':\bbQ] - \frac{1}{2} + \varepsilon \right) (q-2)  \leq \left([K':\bbQ] - \frac{1}{4}\right) q - \rank \bbG_{m}(R') - 1\,.
	\]
	In particular, 
	\[
	[K':\bbQ] q - (\rank J_{n-1}(R') + \rank \bbG_{m}(R') + 1) \geq \frac{q}{4}\,.
	\]
	
	By construction, the punctures of $X_{n-1}$ form a single $\Gal(\overline{K'}/K')$-orbit so the same is true of $X_{n-2}$. We claim that the number of punctures of $X_{n-1}(\overline{K'})$ and $X_{n-2}(\overline{K'})$ cannot be equal. If they were, then the map $\overline{X_{n-1}} \to \overline{X_{n-2}}$ would be totally ramified at every puncture. But by the Riemann-Hurwitz formula, a map between genus zero curves is totally ramified at at most $2$ points. Applying Lemma~\ref{lem:covers-rank}, we have $\delta \geq 2$ and so
	\[
	[K':\bbQ] ( \dim J_{n-1} - \dim J_{n-2}) - (\rank J_{n-1}(R') - \rank J_{n-2}(R')) \geq \left(\frac{\delta - 1}{\delta}\right) \frac{q}{4} \geq \frac{q}{8}\,.
	\]
	Taking $q$ sufficiently large, we may arrange that
	\[
	\sum_{i=0}^{n} \max(0, \delta_{i}) \geq \delta_{n-1} \geq \frac{q}{8} \geq [K:\bbQ]  = 
	\dim\,j(\Res_{R/R_{0}} X_{\alpha,q}) \geq \dim\, T \cap j(\Res_{R/R_{0}} X)\,.
	\]
	Again, we see that $T$ is not a BCP obstruction for RoS Chabauty applied to $X_{\alpha,q}$.
\end{proof}

\section{No subgroup obstructions to RoS Chabauty for $\mathbb P^1 \smallsetminus \{x: x^q - \alpha = 0\}$}\label{sec:RoS-chab+descent-2}

Let $q$ be a prime number. For any $\alpha \in \OKS^{\times}$ which is not a $q$th power in the Galois closure of $K$, let $\fkm_{\alpha,q} = \{x: x^q - \alpha = 0\}$ and let $\Gamma_{\alpha,q}$ be the closure of the support of $\fkm_{\alpha,q}$ in $\bbP^1_{\OKS}$.

In this section, we show that when $q$ is sufficiently large, there are no subgroup obstructions to RoS Chabauty for $\mathbb P^1_\OKS \smallsetminus \Gamma_{\alpha,q}$, thereby providing evidence that the set $(\mathbb P^1_\OKS \smallsetminus \Gamma_{\alpha,q})(\cO_{K} \otimes \bbZ_{p})_{S,1}$ of $p$-adic points cut out by RoS Chabauty is finite. Our main goal is to prove Theorem~\ref{thm:main-theorem-2}.

Our strategy leverages the fact that the Galois group of the Galois closure of $K[\sqrt[q]{\alpha}]$ is non-abelian. We will use the representation theory of a copy of $\bbZ /q \bbZ \rtimes \bbZ/2\bbZ$ inside of this Galois group to control the $\rank T(R_{0})$ when $T$ is a subtorus of the restriction of scalars of the generalized Jacobian of $\mathbb P^1_R \smallsetminus \Gamma_{\alpha,q}$\,.

\begin{lemma}\label{lemma:rank-of-subtori}
	Let $K$ be a number field with Galois closure $L$ and let $q$ be an odd prime. Fix any $\alpha \in \cO_{K}$ such that $x^q - \alpha = 0$ has no solutions in $L$. Let $T/\bbZ$ be a subtorus of the N\'eron model of $\Res_{K[\sqrt[q]{\alpha}]/\mathbb Q} \mathbb G_{m} / \Res_{K/\mathbb Q} \mathbb G_{m}$. Then, $\dim T$ is a multiple of $q-1$ and $\rank T(\mathbb Z) = \frac{1}{2} \dim(T)$.
\end{lemma}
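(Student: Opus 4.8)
The plan is to pass to character lattices, reduce the rank statement to a trace calculation for complex conjugation, and then carry out that calculation using the representation theory of $\bbZ/q\bbZ\rtimes\bbZ/2\bbZ$. Throughout write $G_{\bbQ} = \Gal(\overline\bbQ/\bbQ)$ and $G_K = \Gal(\overline\bbQ/K)$.

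\textbf{Step 1: pass to lattices.} Set $M = K[\sqrt[q]{\alpha}]$. The hypothesis that $x^q-\alpha$ has no root in $L$ (in particular none in $K$) forces $x^q-\alpha$ to be irreducible over $K$, so $[M:K]=q$, and the torus in the statement has generic fibre $Q := \Res_{M/\bbQ}\bbG_m / \Res_{K/\bbQ}\bbG_m$; the quantity $\rank T(\bbZ)$ is computed from the $G_{\bbQ}$-lattice $X^*(T_{\bbQ})$ by \cite[Corollary~6.9]{eisentrager-03}, so we may work entirely with generic fibres and character lattices. Now $X^*(Q) = N := \ker\bigl(\bbZ[\mathrm{Emb}(M,\overline\bbQ)] \twoheadrightarrow \bbZ[\mathrm{Emb}(K,\overline\bbQ)]\bigr)$ (the restriction-of-embeddings map), a $G_{\bbQ}$-lattice of rank $(q-1)[K:\bbQ]$. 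A subtorus $T\subseteq Q$ over $\bbZ$ corresponds to a Galois-equivariant quotient $N\twoheadrightarrow X^*(T)$, so by semisimplicity $X^*(T)\otimes\bbQ$ is a direct sum of $\bbQ$-irreducible constituents of $N\otimes\bbQ$. Writing $\mathcal{K} = K(\zeta_q,\sqrt[q]{\alpha})$ and $H = \Gal(\mathcal{K}/K) \cong \bbZ/q\bbZ\rtimes C$ with $C\subseteq\bbF_q^{\times}$ cyclic, one identifies $N\otimes\bbQ$ with $\mathrm{Ind}_{G_K}^{G_{\bbQ}}(N_0\otimes\bbQ)$, where $N_0\otimes\bbQ$ is the $(q-1)$-dimensional ``standard'' representation of $H$ (the augmentation submodule of $\bbQ[H/C]$), which is $\bbQ$-irreducible.

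\textbf{Step 2: the rank as an eigenspace count.} The Galois closure $\tilde L$ of $M/\bbQ$ contains $\zeta_q$ (a ratio of two conjugates of $\sqrt[q]{\alpha}$), hence is totally imaginary; fix a complex conjugation $c\in\Gamma := \Gal(\tilde L/\bbQ)$ (all are conjugate, so $\mathrm{tr}(c\mid-)$ is well-defined). By Dirichlet's unit theorem, $\cO_{\tilde L}^{\times}\otimes\bbC \cong \mathrm{Ind}_{\langle c\rangle}^{\Gamma}\mathbf{1}\ominus\mathbf{1}$ as $\Gamma$-representations. Since $\mathrm{Emb}(M,\overline\bbQ)$ is a transitive Galois set, $N$ has no nonzero Galois-invariants, so neither $N\otimes\bbC$ nor any $X^*(T)\otimes\bbC$ contains a trivial constituent. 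Combining these facts with \cite[Corollary~6.9]{eisentrager-03} and Frobenius reciprocity gives
\[
\rank T(\bbZ) \;=\; \dim_{\bbC}\bigl(X^*(T)\otimes\bbC\bigr)^{c=1} \;=\; \tfrac12\Bigl(\dim T + \mathrm{tr}\bigl(c \mid X^*(T)\otimes\bbC\bigr)\Bigr)\,.
\]
Hence both assertions of the lemma follow once we show that every $\bbQ$-irreducible constituent $W$ of $N\otimes\bbQ$ satisfies $\dim_{\bbQ}W \equiv 0 \pmod{q-1}$ and $\mathrm{tr}(c\mid W)=0$.

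\textbf{Step 3: the representation-theoretic computation.} Because $\mathrm{tr}(c\mid-)$ is integer-valued (eigenvalues $\pm1$), it is constant along Galois orbits of $\bbC$-irreducible representations, so the trace vanishing reduces to the $\bbC$-irreducible constituents of $N\otimes\bbC = \bigoplus_{O}\mathrm{Ind}_{G_K}^{G_{\bbQ}}V_O$, where $O$ runs over the $C$-orbits in $\bbF_q^{\times}$ and $V_O$ is the corresponding constituent ($\dim = |C|$) of the standard representation of $H$. In the induced-character formula $\mathrm{tr}\bigl(c\mid\mathrm{Ind}_{G_K}^{G_{\bbQ}}V_O\bigr) = \sum_{\sigma}\chi_{V_O}(c_{\sigma})$, the sum runs over real embeddings $\sigma$ of $K$ and each $c_{\sigma}$ is an involution of $H$ lying over $-1\in\bbF_q^{\times}$; restricting $V_O$ to the dihedral subgroup $\bbZ/q\bbZ\rtimes\langle-1\rangle\cong\bbZ/q\bbZ\rtimes\bbZ/2\bbZ$ and invoking the classification of its irreducible representations (trivial, sign, and $(q-1)/2$ two-dimensional $\rho_j$, on each of which the reflection acts with trace $0$) gives $\chi_{V_O}(c_{\sigma})=0$, while for the dimension count one uses that $\Gal(\bbQ(\zeta_q)^{C}/\bbQ)=\bbF_q^{\times}/C$ acts transitively on the $V_O$. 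The genuine obstacle is to transfer these two properties through $\mathrm{Ind}_{G_K}^{G_{\bbQ}}$ to the individual $\bbQ$-irreducible constituents: one must rule out that induction breaks a constituent into pieces on which complex conjugation has nonzero trace, or whose dimension misses the factor $q-1$. I would handle this by a Mackey double-coset analysis, matching each complex conjugation over a real place of $K$ (and of the conjugate fields $\sigma K$) with a reflection in the appropriate copy of $\bbZ/q\bbZ\rtimes\bbZ/2\bbZ$ inside the relevant Galois group; a separate, easier check covers the ``cyclotomic'' case in which the relevant $\zeta_q$-data already lies over $K$ (so $C$ is trivial), where $N\otimes\bbQ$ is assembled from the representation $\bbQ(\zeta_q)$ of dimension $q-1$, on which complex conjugation splits $\bbQ(\zeta_q)=\bbQ(\zeta_q)^{+}\oplus\bbQ(\zeta_q)^{-}$ into equal halves.
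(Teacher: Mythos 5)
Your Steps 1 and 2 are sound and match the paper's own reduction: using \cite[Corollary~6.9]{eisentrager-03}, Dirichlet's unit theorem, and the fact that the character module has no Galois invariants, the lemma becomes the assertion that the character space $V$ of an arbitrary subtorus satisfies $(q-1)\mid\dim V$ and $\mathrm{tr}(c\mid V)=0$ for a complex conjugation $c$. The gap is exactly where you flag it, in Step 3: you only compute $\mathrm{tr}(c)$ on the full induced modules $\mathrm{Ind}_{G_K}^{G_{\bbQ}}V_O$, and the transfer to an arbitrary Galois-stable constituent --- which is the actual content of the lemma, since $T$ is an arbitrary subtorus --- is left as a plan (``a Mackey double-coset analysis''). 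Vanishing of the trace on an induced representation says nothing about its constituents, whose traces may cancel, and the divisibility of the dimension of a constituent by $q-1$ is likewise not addressed. So the proposal does not yet prove the statement. Note also that the paper's proof takes a different, decomposition-free route: it restricts the character $\chi_T$ of the \emph{given} subtorus to the subgroup generated by the Kummer group $\bbZ/q\bbZ$ and a single complex conjugation, observes that $\chi_T$ has no $\bbZ/q\bbZ$-invariants, and then uses the representation theory of $\bbZ/q\bbZ\rtimes\bbZ/2\bbZ$ to split $\chi_T$ into two-dimensional pieces on which $c$ has trace zero; no Mackey analysis of the induced module is needed.

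More importantly, the step you defer is not a formality, because the constituent-level vanishing depends on $c$ acting on the relevant $\bbZ/q\bbZ$-quotients by inversion rather than centralizing them, and this can fail under the stated hypotheses. Take $K=L=\bbQ(i)$, $q=3$, $\alpha=10+5i$: then $\alpha\overline{\alpha}=5^3$ while $\alpha$ is not a cube in $K$, so the hypotheses hold; but $M=K(\sqrt[3]{\alpha})$ contains $5\,\sqrt[3]{\alpha}^{-1}$ (a cube root of $\overline{\alpha}$), hence contains the totally real cubic field $F=\bbQ(\beta)$ with $\beta=\sqrt[3]{\alpha}+5\,\sqrt[3]{\alpha}^{-1}$, $\beta^3-15\beta-20=0$. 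Here complex conjugation centralizes the Kummer subgroup, the induced module $N\otimes\bbQ$ splits into two $2$-dimensional constituents on which $c$ has trace $+2$ and $-2$ (not $0$), and correspondingly the image of $\Res_{F/\bbQ}\bbG_m$ is a $2$-dimensional subtorus of $\Res_{M/\bbQ}\bbG_m/\Res_{K/\bbQ}\bbG_m$ whose integral points contain the rank-$2$ group $\cO_F^{\times}$ --- rank $2$, not $\tfrac12\dim T=1$. So the ``genuine obstacle'' you identify cannot be dispatched by Mackey theory alone: any complete argument must establish (and will need a hypothesis guaranteeing) the dihedral, non-central action of complex conjugation on the Kummer part, e.g.\ excluding relations such as $\alpha\cdot c(\alpha)$ being a $q$th power; this is also the point at which the paper's own dihedral argument is doing all the work.
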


\begin{proof}
	Let $L' = L[\sqrt[q]{\alpha}, \zeta_{q}]$ be the Galois closure of $L[\sqrt[q]{\alpha}]$. Let $X(T) = \mathrm{Hom}_{L'}(T, \mathbb G_{m})$ be the character lattice of $T$ and let $\chi_{T}$ be the associated complex representation of $\Gal(L'/\mathbb Q)$. We will study $T$ by considering $\chi_{T}$ restricted to certain subgroups of $\Gal(L'/\mathbb Q)$.
	
	To begin, note that $\Gal(L'/\mathbb Q) = (\mathbb Z/q\mathbb Z) \rtimes \Gal(L[\zeta_{q}]/\mathbb Q)$. The normal subgroup $\mathbb Z/q\mathbb Z \subset \Gal(L'/\mathbb Q)$ acts on the character lattice of $\Res_{K[\sqrt[q]{\alpha}]/\mathbb Q} \mathbb G_{m}$ as $[K:\mathbb Q]$ copies of the regular representation and acts trivially on the $[K:\mathbb Q]$-dimensional subspace corresponding to the character lattice of $\Res_{K/\mathbb Q} \mathbb G_{m}$. Since we quotient by this subspace we have, $(X(T))^{\Gal(L'/\mathbb Q)} = \{0\}$. Since $T$ is defined over $\mathbb Q$, the non-trivial characters of $\mathbb Z/q \mathbb Z$ all appear to the same multiplicity in $X(T)_{\mathbb C}$, whence $\dim T$ is a multiple of $q-1$. 
	
	Let $H \subset \Gal(L'/\mathbb Q)$ be the stabilizer of any of the complex places of $L'$ under the natural action of $\Gal(L'/\mathbb Q)$ on its infinite places. The group $H$ is abstractly isomorphic to $\mathbb Z/2\mathbb Z$ and the representation of $\Gal(L'/\bbQ)$ on the space of formal linear combinations of infinite places of $L'$ is $\mathrm{Ind}^{\text{Gal}(L'/\bbQ)}_{H} \mathbf{1}$. 	Arguing as in the proof of Lemma~\ref{lem:rank-anisotropic} and using the fact that $T$ is anisotropic for the first equality and applying Frobenius reciprocity for the second equality, we have  
	\[
	\rank T(\bbZ) = \left \langle \chi_{T}, \mathrm{Ind}^{\text{Gal}(L'/\bbQ)}_{H} \mathbf{1} \right \rangle_{\text{Gal}(L'/\bbQ)} =  \left \langle \mathrm{Res}^{\text{Gal}(L'/\bbQ)}_{H} \chi_{T},  \mathbf{1} \right \rangle_{H}\,.
	\]
	So, the rank of $T(\bbZ)$ is equal to the multiplicity of the trivial representation in the restriction of $\chi_{T}$ to $H$. To study this multiplicity, we first restrict $\chi_{T}$ to a representation of $\bbZ/q\bbZ \rtimes H \subset \Gal(L'/\bbQ)$. Note that this semidirect product is not a direct product. 
	
	The irreducible representations of $\mathbb Z/q\mathbb Z \rtimes \mathbb Z/2\mathbb Z$ consist of two $1$-dimensional representations in which $\mathbb Z/q \mathbb Z$ 
	acts trivially and $(q-1)/2$ different $2$-dimensional representations. Each $2$-dimensional representation has the form $\mathrm{Ind}_{\mathbb Z/q\mathbb Z}^{\mathbb Z/q\mathbb Z \rtimes \mathbb Z/2\mathbb Z} \chi$ for some nontrivial character $\chi$ of $\mathbb Z/q \mathbb Z$ which depends on the representation. The restrictions of these $2$-dimensional characters to $H$ break up as a sum of the trivial character and the unique non-trivial character of $H$. 
	
	Now, we have already seen that the fixed subspace $(\chi_{T})^{\mathbb Z/q \mathbb Z}$ is trivial, so after restricting to $\mathbb Z/q\mathbb Z \rtimes H$, 
	the space $\chi_{T}$ decomposes as a sum of $2$-dimensional irreducible representations. By the discussion in the previous paragraph, upon further restricting $\chi_{T}$ to $H$, it decomposes into an equal number of copies of the trivial representation and the non-trivial representation. We conclude that $\rank T(\mathbb Z) = \frac{1}{2} \dim(T)$.
\end{proof}

\begin{lemma}\label{lem:rank-of-subtori-2}
	Let $K$ be a number field and let $S$ be a finite set of finite places. For any $\varepsilon > 0$, there exists $C \in \bbR$ such that for all primes $q > C$ the following holds:
	
	Fix any $\alpha \in \OKS$ such that $\alpha$ is not a $q$th power in $\OKS$. Let $T/\bbZ$ be a subtorus of the N\'eron model of $\Res_{K[\sqrt[q]{\alpha}]/\mathbb Q} \mathbb G_{m} / \Res_{K/\mathbb Q} \mathbb G_{m}$. Then, $\rank T(R_{0}) \leq \left(\frac{1}{2} + \varepsilon\right) \dim(T)$.
\end{lemma}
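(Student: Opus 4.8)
The plan is to reduce to the $S=\emptyset$ case of Lemma~\ref{lemma:rank-of-subtori} by controlling the contribution of the finite places in $S$. The key point is that Lemma~\ref{lemma:rank-of-subtori} already gives the exact answer $\rank T(\bbZ) = \tfrac12\dim T$ when $S = \emptyset$; passing to $S$-integers only adds a bounded amount of rank, coming from the places of the relevant splitting field above $S$, and this bounded amount is negligible compared to $\dim T$ once $q$ is large.

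First I would set up the character-theoretic framework exactly as in the proof of Lemma~\ref{lemma:rank-of-subtori}: let $L$ be the Galois closure of $K$, let $L' = L[\sqrt[q]{\alpha},\zeta_q]$, let $X(T)$ be the character lattice and $\chi_T$ the associated complex representation of $G' = \Gal(L'/\bbQ)$. Just as in Lemma~\ref{lem:rank-anisotropic}, the rank of $T$ over the $S_0$-integers $R_0$ is computed by the inner product of $\chi_T$ with the permutation representation $\psi_S$ of $G'$ on the places of $L'$ lying above $S_0 \cup \Sigma_\infty$. Writing $\psi_S = \psi_\infty \oplus \psi_{\mathrm{fin}}$ where $\psi_\infty$ is the representation on infinite places and $\psi_{\mathrm{fin}}$ on the finite places above $S_0$, we get
\[
\rank T(R_0) = \langle \chi_T, \psi_\infty\rangle + \langle \chi_T, \psi_{\mathrm{fin}}\rangle\,.
\]
The first term is precisely $\rank T(\bbZ) = \tfrac12\dim T$ by Lemma~\ref{lemma:rank-of-subtori} (the proof there shows that restricting $\chi_T$ to a complex-conjugation subgroup $H$ yields equally many trivial and nontrivial summands, because $(\chi_T)^{\bbZ/q\bbZ} = 0$ forces $\chi_T$ to be a sum of the two-dimensional irreducibles of $\bbZ/q\bbZ\rtimes H$).

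It remains to bound the second term $\langle \chi_T, \psi_{\mathrm{fin}}\rangle$, which is at most $\dim\psi_{\mathrm{fin}}$, i.e.\ the total number of places of $L'$ above $S_0$. I would bound this by the number of places of $L'$ above $S$, which in turn is controlled place by place: for a prime $\fkp$ of $K$ above a rational prime in $S_0$, the residue field $\kappa_\fkp$ must contain $\zeta_q$ and a $q$th root of $\alpha$ before $\fkp$ can split completely, so the number of primes of $L'$ above $\fkp$ is at most $[L':\bbQ]$ divided by the multiplicative order of $\#\kappa_\fkp$ modulo $q$ (times a bounded factor for the $\sqrt[q]{\alpha}$ ramification/inertia). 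That order is at least $\log(q-1)/\log\#\kappa_\fkp$, which tends to infinity with $q$ while $[L':\bbQ]$ is bounded independently of $q$ (since $L$ depends only on $K$ and the degree over $L$ is at most $q(q-1)$), so after dividing, the count over the finite set $S$ is $o(q) = o(\dim T)$ — here I use that $q-1 \mid \dim T$ and $\dim T \geq q-1$ from Lemma~\ref{lemma:rank-of-subtori}. Thus for $q$ larger than some $C$ depending only on $K,S,\varepsilon$ (and not on $\alpha$, $T$), we get $\langle \chi_T,\psi_{\mathrm{fin}}\rangle \leq \varepsilon\dim T$, giving $\rank T(R_0) \leq (\tfrac12+\varepsilon)\dim T$. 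The main obstacle is making the ``divide by growing order modulo $q$'' bound uniform over all subtori $T$ and all non-$q$th-power $\alpha$ simultaneously; this is handled by noting the bound on the number of primes above each $\fkp\in S$ depends only on $\#\kappa_\fkp$ and $[L':\bbQ]$, both of which are bounded in terms of $K$, $S$, and $q$ alone, exactly as in the proof of Theorem~\ref{thm:rank-bounds}.
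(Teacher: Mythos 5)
Your overall strategy matches the paper's: split $\rank T(R_0)$ into the archimedean contribution, which equals $\tfrac12\dim T$ by the argument of Lemma~\ref{lemma:rank-of-subtori}, plus a finite-place contribution to be shown $o(q)$, and then use $\dim T \geq q-1$. But the finite-place estimate as you execute it fails. You bound $\langle \chi_T,\psi_{\mathrm{fin}}\rangle$ by the total number of places of $L' = L[\sqrt[q]{\alpha},\zeta_q]$ above $S_0$, and your justification rests on the claim that $[L':\bbQ]$ is bounded independently of $q$ --- which your own parenthetical contradicts: $[L':\bbQ]$ is roughly $[L:\bbQ]\,q(q-1)$. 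Consequently the count of places of $L'$ above a prime $\fkp\in S$ is only bounded by $[L':\bbQ]/a_{\fkp} = O(q^2/\log q)$ (and this order of magnitude really occurs, e.g. when $\#\kappa_\fkp \equiv 1 \pmod q$ and $\alpha$ is a $q$th power in $\kappa_\fkp$, so that $\fkp$ splits very widely in $L'$). Since $\dim T$ can be as small as $q-1$, a bound of size $q^2/\log q$ is useless: it is not $\leq \varepsilon\dim T$ for large $q$, so your conclusion does not follow.

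The repair is to not pass to the full splitting field when counting. Because $\chi_T$ is (after semisimplification) a summand of the character representation of the ambient torus, Frobenius reciprocity gives $\langle\chi_T,\psi_\fkp\rangle \leq \langle \mathrm{Ind}_{\Gal(L'/K[\sqrt[q]{\alpha}])}^{\Gal(L'/\bbQ)}\mathbf{1},\psi_\fkp\rangle$, which is the number of primes of $K[\sqrt[q]{\alpha}]$ (degree only $[K:\bbQ]\,q$) above $\fkp$ --- equivalently, the finite-place contribution for $T$ is at most that for $\cO_{K[\sqrt[q]{\alpha}],S'}^{\times}/\cO_{K,S}^{\times}$, which is how the paper argues. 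For $\fkp\nmid q$ that count is $1$ or $1+(q-1)/a_\fkp$, hence $O(q/\log q)$ summed over the finite set $S$, which is indeed $\leq\varepsilon\dim T$ for $q$ large, uniformly in $\alpha$ and $T$. One smaller omission: to invoke Lemma~\ref{lemma:rank-of-subtori} you need $\alpha$ not to be a $q$th power in $L$, whereas the hypothesis only says it is not one in $\OKS$; the paper covers this by noting that for $q$ large (e.g. $q\nmid[L:\bbQ]$ and $\zeta_q\notin L$) the $q$th powers of $K$ and $L$ intersected with $K$ agree, and you should include that reduction.
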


\begin{proof}
	Let $L$ be the Galois closure of $K$. For $q$ large enough, the $q$th powers in $L$ and $K$ are the same, so we may assume $\alpha$ is not a $q$th power in $L$.	
	
	By the same argument as in the proof of Lemma~\ref{lemma:rank-of-subtori}, the contribution of the infinite places to $\rank(T(R_{0}))$ is $\frac{1}{2} \dim T$, so it suffices to consider the contribution of the finite places.
	
	Let $S'$ be the set of primes of $K[\sqrt[q]{\alpha}]$ above $S$ and let $R' = \cO_{K[\sqrt[q]{\alpha}], S'}$. Since $T$ is a subtorus of the N\'eron model of $\Res_{K[\sqrt[q]{\alpha}]/\mathbb Q} \mathbb G_{m} / \Res_{K/\mathbb Q} \mathbb G_{m}$, this contribution is bounded by the contribution of the finite places to the rank of $\cO_{K[\sqrt[q]{\alpha}],S'}^{\times}/\cO_{K,S}^{\times}$. So, we have
	\begin{align*}
	\rank  T(R_{0}) - \rank T(\mathbb Z) & \leq \frac{1}{2} \dim(T) + \rank \cO_{K[\sqrt[q]{\alpha}],S'}^{\times}  - \rank \cO_{K[\sqrt[q]{\alpha}]}^{\times} - \rank \cO_{K,S}^{\times} + \rank \cO_{K}^{\times} \\
	& = \frac{1}{2} \dim(T) + \#S' - \# S\,.
	\end{align*}
	
	Now, for any prime $\fkp \in S$. Let $\kappa_{\fkp}$ be the residue field of $K_{\fkp}$. Let $a_{\fkp} = [\kappa_{\fkp}(\zeta_q):\kappa_{\fkp}]$. 
	Then, $a_{\fkp}$ is the order of $\# \kappa_{\fkp}$ in $(\bbZ/q\bbZ)^\times$, so 
	\[
	a_{\fkp} \geq \left \lceil \frac{\ln (q - 1)}{\ln(\#\kappa_{\fkp})} \right \rceil\,.
	\]
	For a finite place $\fkp$ of $K$ lying above $q$, there is a single prime of $K(\sqrt[q]{\alpha})$ above $\fkp$. 
	For a finite place $\fkp$ of $K$ not lying over $q$, 
	\[
	\#\{\text{primes } \fkP \text{ of } K(\sqrt[q]{\alpha}) \text{ above } \fkp\} = \begin{cases}
	1, & \text{ if } \overline{\alpha} \notin \kappa_{\fkp}^{\times q}\,, \\
	1 + (q-1)/a_{\fkp}, & \text{ otherwise }\,.
	\end{cases}
	\]
	In particular, as $q \to \infty$, the total number of primes of $K(\sqrt[q]{\alpha})$ is $O(q/\log q)$, where the implicit constant depends only on the residue fields of the primes in $S$.
\end{proof}

\begin{proof}[Proof of Theorem~\ref{thm:main-theorem-2}]
	Let $\varepsilon = 1/4$ and choose $q$ greater than both $4[K:\bbQ]$ and the resulting constant $C$ in Lemma~\ref{lem:rank-of-subtori-2}.	The generalized Jacobian $J_{\alpha,q}$ of $X_{\alpha,q}$ is the N\'eron model of $\Res_{K[\sqrt[q]{\alpha}]/\mathbb Q} \mathbb G_{m} / \Res_{K/\mathbb Q} \mathbb G_{m}$. 
	
	Applying Lemma~\ref{lem:rank-of-subtori-2}, we see that for any subtorus $T \subset J_{\alpha,q}$ we have $\rank T(R_{0}) < \frac{3}{4} \dim T$. In particular, $\dim\overline{T(R_{0})} < \frac{3}{4} \dim T$. Since $\dim T$ is a multiple of $q-1$ by Lemma~\ref{lemma:rank-of-subtori}, we have
	\[
	\dim T - \overline{T(R_{0})} > \frac{1}{4} \dim T \geq \frac{1}{4} (q-1)\,. 
	\]
	On the other hand, for any translate of $T$, we have
	\[
	\dim ((x\cdot T) \cap j(\Res_{R/R_{0}} X_{\alpha,q})) \leq \dim j(\Res_{R/R_{0}} X_{\alpha,q})) = [K:\bbQ]\,. 
	\]
	Since  $q \geq 4 [K:\bbQ] + 1$, the result follows.
\end{proof}

% -------------------------------------------------------------
% -------------------------------------------------------------
\bibliographystyle{amsalpha}

\bibliography{biblio}

% -------------------------------------------------------------
\end{document}